
\documentclass[preprint,11pt]{elsarticle}
\usepackage{makeidx}
\usepackage{amssymb}
\usepackage{amsfonts}
\usepackage{amsmath}

\setcounter{MaxMatrixCols}{10}

\newtheorem{theorem}{Theorem}
\newtheorem{corollary}[theorem]{Corollary}
\newtheorem{definition}[theorem]{Definition}

\newtheorem{lemma}[theorem]{Lemma}
\newtheorem{proposition}[theorem]{Proposition}
\newtheorem{remark}[theorem]{Remark}
\newenvironment{proof}[1][Proof]{\noindent\textbf{#1.} }{\ \rule{0.5em}{0.5em}}
\input{tcilatex}
\begin{document}

\begin{frontmatter}



\title{On the Rate of Convergence of Weak Euler Approximation for Non-degenerate SDEs}


\author{R. Mikulevi\v{c}ius and C. Zhang}

\address{University of Southern California, Los Angeles, USA}

\begin{abstract}
The paper estimates the rate of convergence of the weak Euler approximation for solutions to SDEs
driven by point and martingale measures, with H\"{o}lder continuous coefficients. The equation considered
has a non-degenerate main part whose jump intensity measure is absolutely continuous with respect to
the L\'{e}vy measure of a spherically-symmetric stable process. It includes the nondegenerate diffusions
and SDEs driven by L\'{e}vy processes.

\end{abstract}

\begin{keyword}
L\'{e}vy processes, stochastic differential equations, weak Euler approximation{\ }

\end{keyword}

\end{frontmatter}




\section{\textrm{Introduction}}


In this paper we consider the weak Euler approximation for solutions to SDEs
driven by point and martingale measures. It is a continuation of \cite{MiZ10}
where some Markov It\^{o} processes were approximated. Let $\alpha \in (0,2]$%
\ be fixed. In a complete probability space $(\Omega ,\mathcal{F},\mathbf{P})
$ with a filtration $\mathbb{F}=\{\mathcal{F}_{t}\}_{t\in \lbrack 0,T]}$ of $%
\sigma $-algebras satisfying the usual conditions, we consider an $\mathbb{F}
$-adapted $d$-dimensional stochastic process $X_{t},t\in \lbrack 0,T],$\
solving for $t\in \lbrack 0,T]$ 
\begin{eqnarray}
X_{t} &=&X_{0}+\int_{0}^{t}a_{\alpha }(X_{s-})ds+\int_{0}^{t}b_{\alpha
}(X_{s-})dW_{s}  \label{one} \\
&&+\int_{0}^{t}\int_{|y|>1}c(X_{s-})h_{\alpha }(X_{s-},\frac{y}{|y|}%
)yp_{0}(ds,dy)  \notag \\
&&+\int_{0}^{t}\int_{|y|\leq 1}c(X_{s-})h_{\alpha }(X_{s-},\frac{y}{|y|}%
)yq_{0}(ds,dy)  \notag \\
&&+\int_{0}^{t}\int_{U_{1}^{c}}l_{\alpha
}(X_{s-},v)p(ds,dv)+\int_{0}^{t}\int_{U_{1}}l_{\alpha }(X_{s-},v)q(ds,dv), 
\notag
\end{eqnarray}%
where $W_{t},t\in \lbrack 0,T],$\ is a $d$-dimensional $\mathbb{F}$-adapted\
standard Wiener process, $p_{0}(dt,dy)$ and $p(dt,d\upsilon )$\ are
independent Poisson point measures on $[0,T]\times \mathbf{R}_{0}^{d}$ ($%
\mathbf{R}_{0}^{d}=\mathbf{R}^{d}\backslash \{0\}$) and $[0,T]\times U$
respectively with 
\begin{eqnarray*}
q_{0}(dt,dy) &=&p_{0}(dt,dy)-\frac{dy}{|y|^{d+\alpha }}dt, \\
q(dt,d\upsilon ) &=&p(dt,d\upsilon )-\pi (d\upsilon )dt,
\end{eqnarray*}%
being the corresponding martingale measures and 
\begin{eqnarray*}
a_{\alpha }(x) &=&\mathbf{1}_{\{\alpha \in (0,1)\}}\Big(\int_{|y|\leq
1}c(x)h_{\alpha }(x,\frac{y}{|y|})y\frac{dy}{|y|^{d+\alpha }}%
+\int_{U_{1}}l_{\alpha }(x,v)\pi (dv)\Big) \\
&&+\mathbf{1}_{\{\alpha =1\}}\Big(a(x)+\int_{U_{1}}l_{\alpha }(x,v)\pi (dv)%
\Big) \\
&&+\mathbf{1}_{\{\alpha \in (1,2]\}}\Big(a(x)-\int_{|y|>1}c(x)h_{\alpha }(x,%
\frac{y}{|y|})y\frac{dy}{|y|^{d+\alpha }}\Big), \\
b_{\alpha }(x) &=&\mathbf{1}_{\{\alpha =2\}}b(x),
\end{eqnarray*}%
The coefficient functions $a=(a^{i})_{1\leq i\leq d},\alpha \in \lbrack
1,2],c=(c^{ij})_{1\leq i,j\leq d},h_{\alpha },\alpha \in (0,2),$ and $%
b=(b^{ij})_{1\leq i,j\leq d},\alpha =2,$\ are measurable and bounded, $\pi
(d\upsilon )$ is a non-negative $\sigma $-finite measure on a measurable
space $(U,\mathcal{U})$: there is a sequence $U_{n}\in \mathcal{U}$ such
that $U=\bigcup_{n}U_{n}^{c}$ and $\pi (U_{n}^{c})<\infty $ for each $n.$ We
assume that $l_{\alpha },\alpha \in (0,2]$ is measurable and $%
\int_{U_{1}}|l_{\alpha }(x,\upsilon )|^{\alpha }\pi (d,\upsilon )$ is
bounded. A class of strong Markov processes satisfying (\ref{one}) is
constructed, for example, in \cite{MiP923}, \cite{AbK09} (see references
therein as well). In particular, (\ref{one}) covers a large class of SDEs
driven by L\'{e}vy processes (see subsection 2.3 below).

The process defined in (\ref{one}) is used as a mathematical model for
random dynamic phenomena in applications from fields such as finance and
insurance, to capture continuous and discontinuous uncertainty. It naturally
arises in stochastic differential equations driven by L\'{e}vy processes as
well (see subsection 2.3 below). For many\ applications, the practical
computation of functionals of the type $F=\mathbf{E}g(X_{T})$ and $F=\mathbf{%
E}\int_{0}^{T}f(X_{s})ds$ plays an important role. For instance in finance,
derivative prices can be expressed by such functionals. One possibility to
numerically approximate $F$\ is given by the discrete time Monte-Carlo
simulation of the It\^{o} process $X$. The simplest discrete time
approximation of $X$\ that can be used for such Monte-Carlo methods is the
weak Euler approximation.

Let the time discretization $\{\tau _{i},i=0,\ldots ,n_{T}\}$ of the
interval $[0,T]$ with maximum step size $\delta \in (0,1)$ be a partition of 
$[0,T]$ such that $0=\tau _{0}<\tau _{1}<\dots <\tau _{n_{T}}=T$ and $%
\max_{i}(\tau _{i}-\tau _{i-1})\leq \delta .$ \ The Euler approximation of $%
X $ is an $\mathbb{F}$-adapted stochastic process $Y=\{Y_{t}\}_{t\in \lbrack
0,T]}$ defined for $t\in \lbrack 0,T]$ by the stochastic equation%
\begin{eqnarray}
Y_{t} &=&X_{0}+\int_{0}^{t}\int_{\mathbf{R}_{0}^{d}}c(Y_{\tau
_{i_{s}}})h_{\alpha }(Y_{\tau _{i_{s}}},\frac{y}{|y|})yp_{0}(ds,dy)  \notag
\\
&&+\int_{0}^{t}\int l_{\alpha }(Y_{\tau _{i_{s}}},\upsilon )p(ds,d\upsilon )%
\text{ if }\alpha \in (0,1),  \notag \\
Y_{t} &=&X_{0}+\int_{0}^{t}a(Y_{\tau
_{i_{s}}})ds+\int_{0}^{t}\int_{|y|>1}c(Y_{\tau _{i_{s}}})h_{\alpha }(Y_{\tau
_{i_{s}}},\frac{y}{|y|})yp_{0}(ds,dy)  \label{two} \\
&&+\int_{0}^{t}\int_{|y|\leq 1}c(Y_{\tau _{i_{s}}})h_{\alpha }(Y_{\tau
_{i_{s}}},\frac{y}{|y|})yq_{0}(ds,dy)+\int_{0}^{t}\int l_{\alpha }(Y_{\tau
_{i_{s}}},\upsilon )p(ds,d\upsilon )\text{ if }\alpha =1,  \notag \\
Y_{t} &=&X_{0}+\int_{0}^{t}a(Y_{\tau _{i_{s}}})ds+\int_{0}^{t}\int_{\mathbf{R%
}_{0}^{d}}c(Y_{\tau _{i_{s}}})h_{\alpha }(Y_{\tau _{i_{s}}},\frac{y}{|y|}%
)yq_{0}(ds,dy)  \notag \\
&&+\int_{0}^{t}\int_{U_{1}}l_{\alpha }(Y_{\tau _{i_{s}}},\upsilon
)q(ds,d\upsilon )+\int_{0}^{t}\int_{U_{1}^{c}}l_{\alpha }(Y_{\tau
_{i_{s}}},\upsilon )p(ds,d\upsilon )\text{ if }\alpha \in (1,2),  \notag \\
Y_{t} &=&X_{0}+\int_{0}^{t}a(Y_{\tau _{i_{s}}})ds+\int_{0}^{t}b(Y_{\tau
_{i_{s}}})dW_{s}  \notag \\
&&+\int_{0}^{t}\int_{U_{1}}l_{2}(Y_{\tau _{i_{s}}},\upsilon )q(ds,d\upsilon
)+\int_{0}^{t}\int_{U_{1}^{c}}l_{2}(Y_{\tau _{i_{s}}},\upsilon
)p(ds,d\upsilon )\text{ if }\alpha =2,  \notag
\end{eqnarray}%
where $\tau _{i_{s}}=\tau _{i}$\ if $s\in \lbrack \tau _{i},\tau
_{i+1}),i=0,\ldots ,n_{T}-1.$ Contrary to those in (\ref{one}), the
coefficients in (\ref{two}) are piecewise constants in each time interval of 
$[\tau _{i},\tau _{i+1}).$

The weak Euler approximation $Y$\ is said to converge with order $\kappa >0$%
\ if for each bounded smooth function $g$ with bounded derivatives, there
exists a constant $C$, depending only on $g$, such that 
\begin{equation*}
|\mathbf{E}g(Y_{T})-\mathbf{E}g(X_{T})|\leq C\delta ^{\kappa },
\end{equation*}%
where $\delta >0$\ is the maximum step size of the time discretization.

The cases in which the coefficients are smooth, especially for diffusion
processes ($\alpha =2,\pi =0),$ have been considered by many authors.
Milstein (see \cite{Mil79, Mil86}) was one of the first to study the order
of weak convergence for diffusion processes (\ref{exf1}) with $\alpha =2$
and derived $\kappa =1$. Talay in \cite{Tal84, Tal86} investigated a class
of the second order approximations for diffusion processes. For It\^{o}
processes with jump components, Mikulevi\v{c}ius \& Platen showed the
first-order convergence in the case in which the coefficient functions
possess fourth-order continuous derivatives (see~\cite{MiP88}). In Platen
and Kloeden \& Platen (see \cite{KlP00, Pla99}), not only Euler but also
higher order approximations were studied as well. Protter and Talay in \cite%
{PrT97} considered the weak Euler approximation for\ 
\begin{equation}
X_{t}=X_{0}+\int_{0}^{t}C(X_{s-})dZ_{s},t\in \lbrack 0,T],  \label{pt}
\end{equation}%
where $Z_{t}=(Z_{t}^{1},\ldots ,Z_{t}^{m})$ is a L\'{e}vy process and $%
C=(C^{ij})_{1\leq i\leq d,1\leq j\leq m}$ is a measurable and bounded
function. They\ showed the order of convergence\ $\kappa =1,$\ provided that 
$c$ and $g$\ are smooth and the L\'{e}vy measure of $Z$\ has finite moments
of sufficiently high order. Because of that, the main theorems in \cite%
{PrT97} do not apply to (\ref{exf1}). On the other hand, (\ref{one}) with
non-degenerate $c(x),x\in \mathbf{R}^{d},$ do not cover (\ref{pt}) which can
degenerate completely.

In general, the coefficients and the test function $g$\ do not always have
the smoothness properties assumed in the papers cited above. Mikulevi\v{c}%
ius \& Platen (see \cite{MiP911}) proved that there is still some order of
convergence of the weak Euler approximation for non-degenerate diffusion
processes ((\ref{exf1}) with $\alpha =2$) under H\"{o}lder conditions on the
coefficients and $g$. In Kubilius \& Platen \cite{KuP01}, Platen \&
Bruti-Liberati \cite{plalib} a weak Euler approximation was considered in
the case of a non-degenerate diffusion processes with a finite number of
jumps in finite time intervals.  

This paper is a follow-up to \cite{MiZ10}, where $X_{t}$ was a Markov It\^{o}
process solving a martingale problem. In this paper, we derive the rate of
convergence for (\ref{one}) under $\beta $-H\"{o}lder conditions on the
coefficients. As in \cite{MiZ10} (see \cite{Tal84} as well), we use the
solution to the backward Kolmogorov equation associated with $X_{t}$ and the
one-step estimates derived in \cite{MiZ10}.

In the following Section 2, we introduce assumptions and state the main
result. In Section 3, we present the essential technical results. The main
theorem is proved in Section 4.


\section{Notation and Main Result}



\subsection{Notation}

Denote $H=[0,T]\times \mathbf{R}^{d}$, $\mathbf{N}=\{0,1,2,\ldots \}$, $%
\mathbf{R}_{0}^{d}=\mathbf{R}^{d}\backslash \{0\}$. For $x,y\in \mathbf{R}%
^{d}$, write $(x,y)=\sum_{i=1}^{d}x_{i}y_{i}$, $|x|=\sqrt{(x,x)}$ and $%
|B|=\sum_{i=1}^{d}|B^{ii}|,B\in \mathbf{R}^{d\times d}.$

Let $S^{d-1}$ denote the unit sphere in $\mathbf{R}^{d}$, with $\mu _{d-1}$
being the Lebesgue measure on it.

$C_{b}^{\infty }(H)$ is the set of all functions $u$ on $H$ such that for
all $t\in \lbrack 0,T]$ the function $u(t,x)$ is infinitely differentiable
in $x$ and for every multiindex $\gamma \in \mathbf{N}^{d}$, 
\begin{equation*}
\sup_{(t,x)\in H}|\partial _{x}^{\gamma }u(t,x)|<\infty ,
\end{equation*}%
where%
\begin{equation*}
\partial _{x}^{\gamma }u(t,x)=\frac{\partial ^{|\gamma |}}{\partial ^{\gamma
_{1}}x_{1}\dots \partial ^{\gamma _{d}}x_{d}}u(t,x).
\end{equation*}%
$C_{0}^{\infty }(G)$ is the set of all infinitely differentiable functions
on an open set $G\subseteq \mathbf{R}^{d}$ with compact support. $\mathcal{S}%
(\mathbf{R}^{d})$ is the Schwatz space of rapidly decaying smooth functions.

Denote 
\begin{eqnarray*}
\partial _{t}u(t,x) &=&\frac{\partial }{\partial t}u(t,x), \\
\partial _{i}u(t,x) &=&\frac{\partial }{\partial x_{i}}u(t,x),i=1,\dots ,d,
\\
\partial _{ij}^{2}u(t,x) &=&\frac{\partial ^{2}}{\partial x_{i}x_{j}}%
u(t,x),i,j=1,\dots ,d, \\
\partial _{x}u(t,x) &=&\nabla u(t,x)=\big( \partial _{1}u(t,x),\dots
,\partial _{d}u(t,x)\big) , \\
\partial ^{k}u(t,x) &=&\big( \partial ^{\gamma }u(t,x)\big) _{|\gamma
|=k},k\in \mathbf{N}\text{.}
\end{eqnarray*}

For $\alpha \in (0,2)$, write 
\begin{eqnarray*}
|\partial |^{\alpha }v(x) &=&-\mathcal{F}^{-1}[|\xi |^{\alpha }\mathcal{F}%
v(\xi )](x), \\
|\partial |^{2}v(x) &=&\Delta v(x)=\sum_{i=1}^{d}\partial _{ii}^{2}v(x).
\end{eqnarray*}%
where $\mathcal{F}$ denotes the Fourier transform with respect to $x\in 
\mathbf{R}^{d}$ and $\mathcal{F}^{-1}$ is the inverse Fourier transform,
i.e., 
\begin{equation*}
\mathcal{F}v(\xi )=\int_{\mathbf{R}^{d}}\,\mathrm{e}^{-i(\xi
,x)}u(x)dx,\quad \mathcal{F}^{-1}v(x)=\frac{1}{(2\pi )^{d}}\int_{\mathbf{R}%
^{d}}\,\mathrm{e}^{i(\xi ,x)}v(\xi )d\xi .
\end{equation*}

$C=C(\cdot ,\ldots ,\cdot )$ denotes constants depending only on quantities
appearing in parentheses. In a given context the same letter is (generally)
used to denote different constants depending on the same set of arguments.


\subsection{Assumptions and Main Result}

Assume $m_{\alpha }(x,y)=|h_{\alpha }(x,y)|^{\alpha },x,y\in \mathbf{R}%
^{d},\alpha \in (0,2),$ and its partial derivatives $\partial _{y}^{\gamma
}m_{\alpha }(x,y),|\gamma |\leq d_{0}=\big[ \frac{d}{2}\big] +1$ are
continuous in $(x,y).$ Moreover, $m_{\alpha }(x,y)$ is homogeneous in $y$
with index zero, and $m_{1}(x,y)$ is symmetric in $y$: $%
m_{1}(x,-y)=m_{1}(x,y),x\in \mathbf{R}^{d},y\in S^{d-1}$.

For $\beta =[\beta ]^{-}+\big\{ \beta \big\} ^{+}>0$, where $[\beta ]^{-}\in 
\mathbf{N}$ and $\big\{ \beta \big\} ^{+}\in (0,1]$, let $C^{\beta }(H)$
denote the space of measurable functions $u$ on $H$ such that the norm 
\begin{eqnarray*}
|u|_{\beta } &=&\sum_{|\gamma |\leq \lbrack \beta ]^{-}}\sup_{(t,x)\in
H}|\partial _{x}^{\gamma }u(t,x)|+\sup_{|\gamma |=[\beta ]^{-},t,x\neq 
\tilde{x}}\frac{|\partial _{x}^{\gamma }u(t,x)-\partial _{x}^{\gamma }u(t,%
\tilde{x})|}{|x-\tilde{x}|^{\{\beta \}^{+}}}, \\
\mbox{ if }\{\beta \}^{+} &\in &(0,1), \\
|u|_{\beta } &=&\sum_{|\gamma |\leq \lbrack \beta ]^{-}}\sup_{(t,x)\in
H}|\partial _{x}^{\gamma }u(t,x)| \\
&&+\sup_{|\gamma |=[\beta ]^{-},t,x,h\neq 0}\frac{|\partial _{x}^{\gamma
}u(t,x+h)+\partial _{x}^{\gamma }u(t,x-h)-2\partial _{x}^{\gamma }u(t,x)|}{%
|h|^{\{\beta \}^{+}}}, \\
\mbox{ if }\{\beta \}^{+} &=&1,
\end{eqnarray*}%
is finite. Accordingly, $C^{\beta }(\mathbf{R}^{d})$ denotes the
corresponding space of functions on $\mathbf{R}^{d}$.~The classes $C^{\beta
} $ are H\"{o}lder-Zygmund spaces: they coincide with H\"{o}lder spaces if $%
\beta \notin \mathbf{N}$ (see 1.2.2 of \cite{Tri92}).

Define for $\beta =[\beta ]+\{\beta \}>0$ with $[\beta ]\in \mathbf{N}$, $%
\{\beta \}\in (0,1)$,%
\begin{eqnarray}
M_{\beta }^{(\alpha )} &=&\mathbf{1}_{\{\alpha \in (0,2)\}}|c|_{\beta }+%
\mathbf{1}_{\{\alpha \in \lbrack 1,2]\}}|a|_{\beta }+\mathbf{1}_{\{\alpha
=2\}}|B|_{\beta }  \notag \\
&&+\mathbf{1}_{\{\alpha \in (0,2)\}}\sup_{|\gamma |\leq
d_{0},|y|=1}|\partial _{y}^{\gamma }m^{(\alpha )}(\cdot ,y)|_{\beta }.
\end{eqnarray}

We make the following assumptions.

\textbf{A1 }(i)There is a constant $\mu >0$ such that for all $x\in \mathbf{R%
}^{d}$ and $|\xi |=1,$ 
\begin{eqnarray}
(B(x)\xi ,\xi ) &\geq &\mu ,\mbox{ if }\alpha =2,  \notag \\
\int_{S^{d-1}}|(w,\xi )|^{\alpha }m_{\alpha }(x,w)d\xi &\geq &\mu ,%
\mbox{
if }\alpha \in (0,2),  \label{eqn:bnd_coeffients}
\end{eqnarray}%
where $B(x)=b(x)^{\ast }b(x),x\in \mathbf{R}^{d}$;

(ii)\textbf{\ }It holds that\textbf{\ \ }%
\begin{equation*}
\lim_{n\rightarrow \infty }\sup_{x}\int_{U_{n}}|l_{\alpha }(x,\upsilon
)|^{\alpha }\pi (d\upsilon )=0,\text{ if }\alpha \in (0,2].
\end{equation*}

\textbf{A2(}$\beta )$ It satisfies that $M_{\beta }^{(\alpha )}<\infty
,\inf_{x}|\det c(x)|>0,$ and 
\begin{eqnarray*}
&&\int \big\{\mathbf{1}_{U_{1}}(\upsilon )\big[|l_{\alpha }(x,\upsilon
)|^{\alpha } + \mathbf{1}_{\{\beta \geq 1\}}\sum_{j=1}^{[\beta ]}(|\partial
^{j}l_{\alpha }(x,\upsilon )|^{\alpha }+|\partial ^{j}l_{\alpha }(x,\upsilon
)|^{\frac{[\beta ]}{j}\vee \alpha })\big] \\
&&+\mathbf{1}_{U_{1}^{c}}(\upsilon )\big[|l_{\alpha }(x,\upsilon )|^{\alpha
\wedge 1}\wedge 1+\mathbf{1}_{\{\beta \geq 1\}}\sum_{j=1}^{[\beta
]}(|\partial ^{j}l_{\alpha }(x,\upsilon )|+|\partial ^{j}l^{(\alpha
)}(x,\upsilon )|^{\frac{[\beta ]}{j}})\big]\big\}\pi (d\upsilon ) \\
&\leq &K,
\end{eqnarray*}

\textbf{A3}($\beta )$ For all $x,x^{\prime }\in \mathbf{R}^{d},$%
\begin{eqnarray*}
\int_{U_{1}}[|l_{\alpha }(x,\upsilon )-l_{\alpha }(x^{\prime },\upsilon
)|^{\alpha }+|\partial ^{\lbrack \beta ]}l_{\alpha }(x,\upsilon )-\partial
^{\lbrack \beta ]}l_{\alpha }(x^{\prime },\upsilon )|^{\alpha }]\pi
(d\upsilon ) \leq C|x-x^{\prime }|^{\alpha \beta },\alpha \in \lbrack 1,2],
\end{eqnarray*}%
There exists $\beta ^{\prime }$ such that $\beta \le \alpha +\beta ^{\prime
} < \alpha +\beta$ and for all $x,x^{\prime }\in \mathbf{R}^{d},$%
\begin{eqnarray*}
&&\mathbf{1}_{\{\beta \geq 1\}}\int_{U_{1}}(|l_{\alpha }(x,\upsilon
)-l_{\alpha }(x^{\prime },\upsilon )|^{(\alpha +\beta ^{\prime }-[\beta
])\wedge 1}\wedge 1) \\
& & \times \sum_{j=1}^{[\beta ]}(|\partial ^{j}l_{\alpha }(x,\upsilon
)|^{\alpha \vee 1}+|\partial ^{j}l_{\alpha }(x,\upsilon )|^{\frac{[\beta ]}{j%
}\vee \alpha })\pi (d\upsilon ) \\
&\leq &C|x-x^{\prime }|^{\beta -[\beta ]}, \\
&&\int_{U_{1}^{c}}[|l_{\alpha }(x,\upsilon )-l_{\alpha }(x^{\prime
},\upsilon )|^{(\alpha +\beta ^{\prime }-[\beta ])\wedge 1}\wedge 1] \\
& & \times \big[1+\mathbf{1}_{\{\beta \geq 1\}}\sum_{j=1}^{[\beta
]}(|\partial ^{j}l_{\alpha }(x,\upsilon )|+|\partial ^{j}l_{\alpha
}(x,\upsilon )|^{\frac{[\beta ]}{j}})\big]\pi (d\upsilon ) \\
&\leq &C|x-x^{\prime }|^{\beta -[\beta ]}.
\end{eqnarray*}

\textbf{A4}($\beta )$ For $\beta \geq 1,x,x^{\prime }\in \mathbf{R}^{d},$%
\begin{eqnarray*}
&&\mathbf{1}_{\{\beta \geq 1\}}\sum_{j=1}^{[\beta ]}
\int_{U_{1}^{c}}|\partial ^{j}l_{\alpha }(x,\upsilon )-\partial
^{j}l_{\alpha }(x^{\prime },\upsilon )|\pi (d\upsilon ) \\
& & +\Big( \int_{U_{1}^{c}}|\partial ^{j}l_{\alpha }(x,\upsilon )-\partial
^{j}l_{\alpha }(x^{\prime },\upsilon )|^{\frac{[\beta ]}{j}}\pi (d\upsilon) %
\Big) ^{\frac{j}{[\beta ]}} \\
&\leq &C|x-x^{\prime }|^{\beta -[\beta ]},
\end{eqnarray*}%
and%
\begin{eqnarray*}
&\mathbf{1}_{\{\beta \geq 1\}}&\sum_{j=1}^{[\beta ]}\Big( %
\int_{U_{1}}|\partial ^{j}l_{\alpha }(x,\upsilon )-\partial ^{j}l_{\alpha
}(x^{\prime },\upsilon )|^{\alpha \vee 1}\pi (d\upsilon )\Big) ^{\frac{1}{%
\alpha }\wedge 1} \\
&& \\
&&+\sum_{j=1}^{[\beta ]}\Big( \int_{U_{1}}|\partial ^{j}l_{\alpha
}(x,\upsilon )-\partial ^{j}l_{\alpha }(x^{\prime },\upsilon )|^{\frac{%
[\beta ]}{j}\vee \alpha }\pi (d\upsilon )\Big) ^{\frac{j}{[\beta ]}\wedge 
\frac{1}{\alpha }} \\
&\leq &C|x-x^{\prime }|^{\beta -[\beta ]}.
\end{eqnarray*}


The main result of this paper is the following statement.

\begin{theorem}
\label{thm:main} Let $\alpha \in (0,2]$, $\beta >0,\beta \notin \mathbf{N}$.
Assume \textup{A1-A4}$\QTR{up}{(}\beta )$ hold. Then there exists a constant 
$C$ such that for all $g\in C^{\alpha +\beta }(\mathbf{R}^{d}),f\in C^{\beta
}(\mathbf{R}^{d})$%
\begin{eqnarray}
|\mathbf{E}g(Y_{T})-\mathbf{E}g(X_{T})| &\leq &C|g|_{\alpha +\beta }\delta
^{\kappa (\alpha ,\beta )},  \label{mt1} \\
|\mathbf{E}\int_{0}^{T}f(Y_{\tau _{i_{s}}})ds-\mathbf{E}%
\int_{0}^{T}f(X_{s})ds| &\leq &C|f|_{\beta }\delta ^{\kappa (\alpha ,\beta
)},  \notag
\end{eqnarray}%
where%
\begin{equation*}
\kappa (\alpha ,\beta )=\left\{ 
\begin{array}{cl}
\frac{\beta }{\alpha }, & \beta <\alpha , \\ 
1, & \beta >\alpha .%
\end{array}%
\right.
\end{equation*}
\end{theorem}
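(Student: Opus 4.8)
The plan is to follow the strategy of \cite{MiZ10} and \cite{Tal84}: analyze the one-step error of the Euler scheme against the solution of the backward Kolmogorov equation associated with the generator of $X$, and then sum up these local errors along the partition. First I would introduce the parabolic equation $\partial_t u + L u = 0$ on $[0,T]$ with terminal condition $u(T,\cdot)=g$, where $L$ is the (time-independent) generator of the Markov process in (\ref{one}); by the non-degeneracy assumption A1(i) together with the H\"older regularity of the coefficients encoded in A2-A4$(\beta)$, the smoothing properties of the $\alpha$-stable-type resolvent give that $u(t,\cdot)\in C^{\alpha+\beta}$ uniformly in $t$, with $|u|_{\alpha+\beta}\le C|g|_{\alpha+\beta}$. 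This is the place where the non-degeneracy is essential and where one needs the full strength of the regularity theory for these non-local operators; I expect the proof will quote such an estimate from \cite{MiZ10} or a companion paper rather than reprove it.

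Next I would write the telescoping identity
\begin{equation*}
\mathbf{E}g(Y_T)-\mathbf{E}g(X_T)=\mathbf{E}u(T,Y_T)-u(0,X_0)=\sum_{i=0}^{n_T-1}\mathbf{E}\big[u(\tau_{i+1},Y_{\tau_{i+1}})-u(\tau_i,Y_{\tau_i})\big],
\end{equation*}
and apply It\^o's formula for jump processes to $u(t,Y_t)$ on each interval $[\tau_i,\tau_{i+1})$. Because $Y$ solves (\ref{two}) with coefficients frozen at $Y_{\tau_i}$, the drift/jump terms produced by It\^o's formula involve $L_{Y_{\tau_i}}$ (the generator with coefficients evaluated at the frozen point $Y_{\tau_i}$), whereas $u$ solves $\partial_t u = -L u$ with $L$ evaluated at the running point. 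Hence each summand equals $\mathbf{E}\int_{\tau_i}^{\tau_{i+1}} \big[(L_{Y_{\tau_i}} - L_{Y_s})u(s,Y_s)\big]\,ds$ up to martingale terms that vanish in expectation. The one-step estimate then reduces to bounding $|(L_{Y_{\tau_i}}-L_{Y_s})u(s,\cdot)|$ in sup norm, which by the H\"older-in-$x$ continuity of the coefficients (A2-A4) and the $C^{\alpha+\beta}$ regularity of $u$ is controlled by $C|u|_{\alpha+\beta}\,\mathbf{E}|Y_s-Y_{\tau_i}|^{\beta\wedge\text{(something)}}$; combined with the moment bound $\mathbf{E}|Y_s-Y_{\tau_i}|^{p}\le C(s-\tau_i)^{p/\alpha}$ (for $p<\alpha$) or $\le C(s-\tau_i)$ (for the relevant drift part when $\alpha\le1$, or $p=1$ when $\alpha>1$) this yields a per-interval bound of order $(\tau_{i+1}-\tau_i)\,\delta^{\kappa(\alpha,\beta)}$. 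I would invoke the one-step estimates of \cite{MiZ10} precisely here, since these are the technical heart and are stated to be available.

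Summing over $i$ gives $\sum_i (\tau_{i+1}-\tau_i)\delta^{\kappa(\alpha,\beta)}=T\delta^{\kappa(\alpha,\beta)}$, which is (\ref{mt1}). For the second inequality involving $\int_0^T f$, I would instead solve the inhomogeneous equation $\partial_t u + Lu = -f$ on $[0,T]$ with $u(T,\cdot)=0$; the same Schauder-type estimate gives $u(t,\cdot)\in C^{\alpha+\beta}$ with $|u|_{\alpha+\beta}\le C|f|_\beta$ (here one gains $\alpha$ derivatives over $f\in C^\beta$), and then the identical telescoping-plus-It\^o argument, now producing an extra term $\int_0^T (f(Y_{\tau_{i_s}})-f(Y_s))\,ds$ that is itself $O(\delta^{\kappa})$ by the H\"older continuity of $f$ and the moment bound on $Y_s-Y_{\tau_{i_s}}$, closes the estimate. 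The main obstacle, as noted, is the regularity statement $u\in C^{\alpha+\beta}$: the distinction between $\beta<\alpha$ (where one only gets a fractional power $\beta/\alpha$ because the frozen-coefficient error is measured in the $C^\beta$ scale while the process fluctuates on scale $\delta^{1/\alpha}$) and $\beta>\alpha$ (where the error saturates at the deterministic rate $\delta$) is exactly what produces the two regimes in $\kappa(\alpha,\beta)$, and keeping track of which norm of $u$ and which moment of the increment to use in each regime — especially with the large-jump terms governed by $\pi$ on $U_1^c$ and the growth conditions A3-A4 on $\beta'$ — is the delicate bookkeeping the proof must carry out.
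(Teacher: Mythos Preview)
Your overall architecture matches the paper's: solve the backward Kolmogorov problem $(\partial_t+\mathcal{A}^{(\alpha)}+\mathcal{B}^{(\alpha)})v=f$, $v(T,\cdot)=g$, invoke the Schauder estimate $|v|_{\alpha+\beta}\le C(|g|_{\alpha+\beta}+|f|_\beta)$ (this is Corollary~\ref{lcornew1}, proved in Section~3), and then compare $v(T,Y_T)$ with $v(0,Y_0)=v(0,X_0)$ via It\^o's formula. The paper also treats $g$ and $f$ simultaneously through the single equation~(\ref{maf8}), rather than running two separate arguments as you sketch.

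The genuine gap is in your one-step estimate. You write the local error as $\mathbf{E}\int_{\tau_i}^{\tau_{i+1}}(L_{Y_{\tau_i}}-L_{Y_s})u(s,Y_s)\,ds$ and propose to bound it through the H\"older continuity of the coefficients together with moment bounds of the type $\mathbf{E}|Y_s-Y_{\tau_i}|^{p}\le C(s-\tau_i)^{p/\alpha}$. Under A1--A4$(\beta)$ such moments are not available: the $p_0$-driven part of $Y$ is $\alpha$-stable-like and has $\mathbf{E}|Y_s-Y_{\tau_i}|^{p}=\infty$ for every $p\ge\alpha$, while the large-jump part on $U_1^c$ carries only the assumption $\int_{U_1^c}(|l_\alpha|^{\alpha\wedge1}\wedge1)\,d\pi<\infty$ and hence no moment at all. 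Even restricting to $p<\alpha$, the stable scaling gives $\mathbf{E}|Y_s-Y_{\tau_i}|\sim (s-\tau_i)^{1/\alpha}$ when $\alpha>1$, not $(s-\tau_i)$; plugging this into your scheme for the regime $\beta>\alpha>1$ yields only $\delta^{1/\alpha}$ instead of the claimed $\kappa=1$.

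The paper's decomposition is different and avoids moments entirely. After It\^o and the PDE it rewrites the error as
\[
\mathbf{E}\int_0^T\Big\{\big[\partial_tv(s,Y_s)-\partial_tv(s,Y_{\tau_{i_s}})\big]
+\big[\mathcal{A}^{(\alpha)}_{Y_{\tau_{i_s}}}v(s,Y_s)-\mathcal{A}^{(\alpha)}_{Y_{\tau_{i_s}}}v(s,Y_{\tau_{i_s}})\big]
+\big[\mathcal{B}^{(\alpha)}_{Y_{\tau_{i_s}}}v(s,Y_s)-\mathcal{B}^{(\alpha)}_{Y_{\tau_{i_s}}}v(s,Y_{\tau_{i_s}})\big]\Big\}ds,
\]
i.e.\ as conditional expectations $\mathbf{E}[h(Y_s)-h(Y_{\tau_{i_s}})\mid\mathcal{F}_{\tau_{i_s}}]$ for functions $h\in C^{\beta}$ with $|h|_\beta\le C(|g|_{\alpha+\beta}+|f|_\beta)$ (by~(\ref{maf9})). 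These are handled by Lemma~\ref{lem:expect}: when $\beta>\alpha$ one applies It\^o to $h$ directly and uses that $(\mathcal{A}^{(\alpha)}_z+\mathcal{B}^{(\alpha)}_z)h$ is bounded; when $\beta<\alpha$ one mollifies $h$ to $h^{\varepsilon}$, uses the pointwise bounds $|\mathcal{A}^{(\alpha)}_z h^{\varepsilon}|+|\mathcal{B}^{(\alpha)}_z h^{\varepsilon}|\le C\varepsilon^{\beta-\alpha}|h|_\beta$ from Lemma~\ref{lnew2} and Corollary~\ref{coro3}, and optimizes $\varepsilon^{\beta}+\delta\varepsilon^{\beta-\alpha}$ at $\varepsilon=\delta^{1/\alpha}$ to obtain $\delta^{\beta/\alpha}$. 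This mollification device is the missing ingredient in your plan and is exactly what produces the two regimes of $\kappa(\alpha,\beta)$ without any appeal to moments of the increments.
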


\begin{remark}
\label{rema1}

\begin{enumerate}
\item The second condition of \textup{A1(i)} holds with some constant $\mu
>0 $ if, for example, there is a Borel set $\Gamma \subseteq S^{d-1}$ such
that $\mu _{d-1}(\Gamma )>0$ and $\inf_{x\in \mathbf{R}^{d},w\in \Gamma
}m_{\alpha }(x,w)>0$.

\item The assumptions \textup{A1}-\textup{A4($\beta )$} guarantee that the
solution to the backward Kolmogorov equation associated with $X_{t}$ is $%
(\alpha +\beta )$-H\"{o}lder. If $\alpha =2$ and the operator is differential%
$,$ the assumptions imposed are standard classical. The regularity of the
solution determines the rate of convergence of a weak Euler approximation.
\end{enumerate}
\end{remark}

\subsection{SDEs driven by L\'{e}vy processes}

Let $Z^{0}=Z_{t}^{0},t\in \lbrack 0,T],$ be a standard $d$-dimensional
spherically-symmetric $\alpha $-stable process $($see $(\ref{st})$ for the
definition$)$ with jump measure $p_{0}$ and martingale measure $q_{0}$, and
let $Z_{t}=(Z_{t}^{1},\ldots ,Z_{t}^{m})$ be an independent $m$-dimensional L%
\'{e}vy process defined by%
\begin{eqnarray}
Z_{t} &=&\int_{0}^{t}\int g_{\alpha }(\upsilon )p(ds,d\upsilon ),\alpha \in
(0,1),  \notag \\
Z_{t} &=&\int_{0}^{t}\int_{U_{1}}g_{\alpha }(\upsilon )q(ds,d\upsilon
)+\int_{0}^{t}\int_{U_{1}^{c}}g_{\alpha }(\upsilon )p(ds,d\upsilon ),\alpha
=1, \\
Z_{t} &=&\int_{0}^{t}\int g_{\alpha }(\upsilon )q(ds,d\upsilon ),\alpha \in
(1,2],  \notag
\end{eqnarray}%
where $g_{\alpha }=(g_{\alpha }^{i})_{1\leq i\leq m}$ is a measurable
function on $U$ and%
\begin{equation*}
\int_{U_{1}}|g_{\alpha }(\upsilon )|^{\alpha }\pi (d\upsilon )+\mathbf{1}%
_{\{\alpha \in (1,2)\}}\int_{U_{1}^{c}}|g_{\alpha }(\upsilon )|\pi
(d\upsilon )<\infty .
\end{equation*}

Consider for $t\in \lbrack 0,T]$, 
\begin{eqnarray}
X_{t}
&=&X_{0}+\int_{0}^{t}c(X_{s-})dZ_{s}^{0}+\int_{0}^{t}C(X_{s-})dZ_{s},t\in
\lbrack 0,T],\alpha \in (0,2),  \label{exf1} \\
X_{t}
&=&X_{0}+\int_{0}^{t}a_{2}(X_{s})ds+\int_{0}^{t}b(X_{s})dW_{s}+%
\int_{0}^{t}C(X_{s-})dZ_{s},\alpha =2,  \notag
\end{eqnarray}%
where $c(x)=(c^{ij}(x))_{1\leq i,j\leq d},C(x)=(C^{ij}(x))_{1\leq i\leq
d,1\leq j\leq m},x\in \mathbf{R}^{d},$ are measurable and bounded. Assume
that $c$ is non-degenerate with $\inf_{x}\det |c(x)|>0.$ Obviously, $(\ref%
{exf1})$ can be rewritten as%
\begin{eqnarray*}
X_{t} &=&X_{0}+\int_{0}^{t}\int_{\mathbf{R}_{0}^{d}}c(X_{s-})yp_{0}(ds,dy) \\
&&+\int_{0}^{t}\int C(X_{s-})g_{\alpha }(\upsilon )p(ds,d\upsilon )\text{ if 
}\alpha \in (0,1), \\
X_{t}
&=&X_{0}+\int_{0}^{t}\int_{|y|>1}c(X_{s-})yp_{0}(ds,dy)+\int_{0}^{t}%
\int_{|y|\leq 1}c(X_{s-})yq_{0}(ds,dy) \\
&&+\int_{0}^{t}\int_{U_{1}}C(X_{s-})g_{\alpha }(\upsilon )q(ds,d\upsilon
)+\int_{0}^{t}\int_{U_{1}^{c}}C(X_{s-})g_{\alpha }(\upsilon )p(ds,d\upsilon )%
\text{ if }\alpha =1, \\
X_{t} &=&X_{0}+\int_{0}^{t}\int_{\mathbf{R}_{0}^{d}}c(X_{s-})yq_{0}(ds,dy) \\
&&+\int_{0}^{t}\int_{U_{1}}C(X_{s-})g_{\alpha }(\upsilon )q(ds,d\upsilon
)+\int_{0}^{t}\int_{U_{1}^{c}}C(X_{s-})g_{\alpha }(\upsilon )p(ds,d\upsilon )%
\text{ if }\alpha \in (1,2), \\
X_{t} &=&X_{0}+\int_{0}^{t}a(X_{s})ds+\int_{0}^{t}b(X_{s})dW_{s} \\
&&+\int_{0}^{t}\int_{U_{1}}C(X_{s-})g_{\alpha }(\upsilon )q(ds,d\upsilon
)+\int_{0}^{t}\int_{U_{1}^{c}}C(X_{s-})g_{\alpha }(\upsilon )p(ds,d\upsilon )%
\text{ if }\alpha =2,
\end{eqnarray*}

Applying Theorem \ref{thm:main} to (\ref{exf1}) we obtain easily the
following statements.

\begin{proposition}
\label{cor:main} Let $X_{t},t\in \lbrack 0,T]$ satisfy \textup{(\ref{exf1}), 
}$\inf_{x\in \mathbf{R}^{d}}|\det c(x)|>0,\inf_{x\in \mathbf{R}^{d}}|\det
b(x)|>0$.

For $\alpha \in (0,1)$, we assume $\beta \in (0,1)$,$\alpha +\beta >1$,$%
c^{ij},C^{ij}\in C^{\beta }(\mathbf{R}^{d})$ and%
\begin{equation*}
\int [|g_{\alpha }(\upsilon )|+|g_{\alpha }(\upsilon )|^{\alpha }]\pi
(d\upsilon )<\infty .
\end{equation*}

For $\alpha \in \lbrack 1,2),$ we assume $\beta \neq 1,\beta <2$, $%
a,c^{ij},C^{ij}\in C^{\beta }(\mathbf{R}^{d})$ and%
\begin{equation*}
\int_{U_{1}}[|g_{\alpha }(\upsilon )|^{\alpha }+|g_{\alpha }(\upsilon
)|^{\alpha +[\beta ]}]\pi (d\upsilon )+\int_{U_{1}^{c}}[|g_{\alpha
}(\upsilon )|+|g_{\alpha }(\upsilon )|^{1+[\beta ]}]\pi (d\upsilon )<\infty .
\end{equation*}

For $\alpha =2,$ we assume $\beta <3,\beta \notin \mathbf{N},a,b,C^{ij}\in
C^{\beta }(\mathbf{R}^{d})$ and%
\begin{equation*}
\int_{U_{1}}[|g_{\alpha }(\upsilon )|^{2}+|g_{\alpha }(\upsilon )|^{2+[\beta
]}]\pi (d\upsilon )+\int_{U_{1}^{c}}[|g_{\alpha }(\upsilon )|+|g_{\alpha
}(\upsilon )|^{1+[\beta ]}]\pi (d\upsilon )<\infty .
\end{equation*}%
Then there exists a constant $C$ such that for all $g\in C^{\alpha +\beta }(%
\mathbf{R}^{d}),f\in C^{\beta }(\mathbf{R}^{d})$%
\begin{eqnarray*}
|\mathbf{E}g(Y_{T})-\mathbf{E}g(X_{T})| &\leq &C|g|_{\alpha +\beta }\delta ^{%
\frac{\beta }{\alpha }\wedge 1}, \\
|\mathbf{E}\int_{0}^{T}f(Y_{\tau _{i_{s}}})ds-\mathbf{E}%
\int_{0}^{T}f(X_{s})ds| &\leq &C|f|_{\beta }\delta ^{\frac{\beta }{\alpha }%
\wedge 1}.
\end{eqnarray*}
\end{proposition}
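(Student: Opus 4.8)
The strategy is to recognize that Proposition~\ref{cor:main} is a direct specialization of Theorem~\ref{thm:main}: the SDEs in~(\ref{exf1}) are a particular instance of~(\ref{one}) with the choice $h_\alpha(x,w)\equiv 1$ (so $m_\alpha(x,w)\equiv 1$), $b_\alpha(x)=\mathbf{1}_{\{\alpha=2\}}b(x)$, and jump coefficient $l_\alpha(x,\upsilon)=C(x)g_\alpha(\upsilon)$, with the drift term $a_\alpha$ produced by the compensation bookkeeping exactly as in the definition preceding the statement of Theorem~\ref{thm:main}. So the whole content is to verify that the hypotheses imposed here (non-degeneracy of $c$ and $b$; H\"older regularity $c^{ij},C^{ij},a,b\in C^\beta$; and the stated moment integrability of $g_\alpha$ against $\pi$) imply A1--A4$(\beta)$ for this factorized coefficient, and then to read off $\kappa(\alpha,\beta)=\frac{\beta}{\alpha}\wedge 1$ from the theorem. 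I would organize the verification assumption by assumption.

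First, A1: since $m_\alpha(x,w)\equiv 1$, the condition $\int_{S^{d-1}}|(w,\xi)|^\alpha m_\alpha(x,w)\,d\xi \geq \mu$ holds with a universal $\mu$ (it is a positive constant independent of $\xi$, by rotational symmetry and compactness of the sphere), so A1(i) is automatic for $\alpha\in(0,2)$; for $\alpha=2$ it is just $\inf_x|\det b(x)|>0$ together with boundedness, giving uniform ellipticity of $B=b^*b$. Condition A1(ii) follows from the global integrability $\int_{U_1}|g_\alpha|^\alpha\,\pi(d\upsilon)<\infty$ and boundedness of $C$, by dominated convergence along the exhausting sequence $U_n$. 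Second, A2$(\beta)$: $M_\beta^{(\alpha)}<\infty$ because $c^{ij},a,b\in C^\beta$ and $m_\alpha\equiv 1$ has zero $C^\beta$ seminorm; $\inf_x|\det c(x)|>0$ is assumed directly; and the integral condition on $l_\alpha=Cg_\alpha$ splits as $\partial^j l_\alpha(x,\upsilon)=(\partial^j C(x))g_\alpha(\upsilon)$, so each term is (a bounded function of $x$) times a power $|g_\alpha(\upsilon)|^{r}$ with $r\in\{\alpha,\ \alpha\cdot\frac{[\beta]}{j},\ [\beta]/j\cdot\ldots\}$; checking that all exponents of $|g_\alpha|$ that appear are dominated by the finite set $\{\alpha,\ \alpha+[\beta]\}$ on $U_1$ and by $\{1,\ 1+[\beta]\}$ on $U_1^c$ (using that $\frac{[\beta]}{j}\vee\alpha \le \alpha+[\beta]$ for $1\le j\le[\beta]$, and interpolating the in-between powers via $|g_\alpha|^r \le 1 + |g_\alpha|^{\alpha+[\beta]}$ etc.) matches exactly the moment hypotheses listed for each range of $\alpha$. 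The restrictions $\beta<2$ for $\alpha\in[1,2)$ and $\beta<3$ for $\alpha=2$ are what make $[\beta]$ small enough ($[\beta]\le 1$, resp. $[\beta]\le 2$) so that only $C^\beta$-regularity of the coefficients and the stated moments are needed.

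Third, A3$(\beta)$ and A4$(\beta)$: here the factorization is again decisive. A difference $l_\alpha(x,\upsilon)-l_\alpha(x',\upsilon)=(C(x)-C(x'))g_\alpha(\upsilon)$, and $C\in C^\beta$ gives $|C(x)-C(x')|\le C|x-x'|^{\beta\wedge 1}$ (and $|\partial^{[\beta]}C(x)-\partial^{[\beta]}C(x')|\le C|x-x'|^{\{\beta\}}$); substituting and pulling out the $x$-Hölder factor, every integrand becomes $|x-x'|^{(\text{exponent})}$ times a fixed power of $|g_\alpha|$, and one checks the exponent of $|x-x'|$ is at least $\beta-[\beta]=\{\beta\}$ while the $g_\alpha$-moment that remains is finite by A2's moment bound. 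One must choose $\beta'$ with $\beta\le\alpha+\beta'<\alpha+\beta$ as required; since $\beta\notin\mathbf{N}$ any $\beta'$ slightly below $\beta$ works, and the $(\alpha+\beta'-[\beta])\wedge 1$ exponents then combine correctly. I expect \emph{this matching of exponents} to be the only delicate bookkeeping: one has to be careful that the mixed terms $|l(x)-l(x')|^{(\cdots)\wedge 1}\cdot|\partial^j l(x)|^{(\cdots)}$ in A3 produce a Hölder exponent in $x$ that is exactly $\{\beta\}$ and not less, which is precisely why the two-sided bound $\beta\le\alpha+\beta'<\alpha+\beta$ on $\beta'$ is imposed. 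Everything else is a mechanical substitution. Having verified A1--A4$(\beta)$, Theorem~\ref{thm:main} applies verbatim and yields the two displayed estimates with $\kappa(\alpha,\beta)=\frac{\beta}{\alpha}$ when $\beta<\alpha$ and $=1$ when $\beta>\alpha$, i.e. $\delta^{\frac{\beta}{\alpha}\wedge 1}$, completing the proof.
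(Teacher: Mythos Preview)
Your proposal is correct and follows the same approach as the paper. The paper itself gives no detailed proof of Proposition~\ref{cor:main}; it simply states ``Applying Theorem~\ref{thm:main} to (\ref{exf1}) we obtain easily the following statements,'' and your write-up spells out precisely this verification---identifying $h_\alpha\equiv 1$, $l_\alpha(x,\upsilon)=C(x)g_\alpha(\upsilon)$, and checking A1--A4$(\beta)$ case by case---which is exactly what the paper leaves implicit.
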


\begin{remark}
Proposition \ref{cor:main} improves the rate of convergence for diffusion
processes in \textup{\cite{MiP911}} with $\beta \in (1,2)$. Under the
assumption of Proposition \ref{cor:main} $($with $\alpha =2,C^{ij}=0)$, it
was derived in \textup{\cite{MiP911}} that the convergence rate is of the
order $\frac{1}{3-\beta }<\kappa (2,\beta )=\frac{\beta }{2}$ if $\beta \in
(1,2)$.
\end{remark}


\section{Backward Kolmogorov Equation}


To determine the form of the backward Kolmogorov equation associated with $%
X_{t}$ in (\ref{one}), we find the compensator of the jump measure of $X$
first.

\begin{lemma}
\label{le1}Let $p^{X}$ be the jump measure of $X_{t}$ in $(\ref{one})$. Then 
\begin{equation*}
q^{X}(dt,dy)=p^{X}(dt,dy)=\tilde{m}_{\alpha }(X_{t-},\frac{y}{|y|})\frac{dy}{%
|y|^{d+\alpha }}dt + \int_{U}\mathbf{1}_{dy}(l_{\alpha }(X_{t-},\upsilon
))\pi (d\upsilon )dt
\end{equation*}%
is a martingale measure, where%
\begin{equation*}
\tilde{m}_{\alpha }(x,\frac{y}{|y|})=\frac{1}{\big\vert \det c(x)\big\vert }%
\frac{1}{|c(x)^{-1}\frac{y}{|y|}|^{d+\alpha }}m_{\alpha }\big( x,\frac{%
c(x)^{-1}\frac{y}{|y|}}{\big\vert c(x)^{-1}\frac{y}{|y|}\big\vert }\big) %
,x\in \mathbf{R}^{d},y\in \mathbf{R}_{0}^{d}.
\end{equation*}
\end{lemma}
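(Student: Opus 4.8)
The goal is to identify the jump measure $p^X$ of the process $X_t$ defined in (\ref{one}) and to verify that the stated expression for $q^X$ is a martingale measure, i.e.\ that the given $\tilde m_\alpha(x,y/|y|)|y|^{-d-\alpha}\,dy\,dt + \int_U \mathbf 1_{dy}(l_\alpha(X_{t-},v))\pi(dv)\,dt$ is the predictable compensator of $p^X$. The natural approach is to split the jumps of $X$ into two independent sources: those coming from the stable-type driving measure $p_0(ds,dy)$, filtered through the coefficient $x\mapsto c(x)h_\alpha(x,y/|y|)y$, and those coming from the Poisson measure $p(ds,dv)$, filtered through $x\mapsto l_\alpha(x,v)$. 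Since $p_0$ and $p$ are independent Poisson point measures, the jump measure of $X$ is (almost surely) the sum of the two pushed-forward measures, and the compensator is the sum of the corresponding compensators. The $l_\alpha$-part is immediate: the compensator of $p$ is $\pi(dv)\,dt$, so the image under $v\mapsto l_\alpha(X_{t-},v)$ contributes exactly $\int_U \mathbf 1_{dy}(l_\alpha(X_{t-},v))\pi(dv)\,dt$, which is the second term as stated.

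The substantive part is the first term, and the plan is a change-of-variables computation for the image of the stable L\'evy measure $\nu_0(dy)=dy/|y|^{d+\alpha}$ under the map $T_x:y\mapsto c(x)h_\alpha(x,y/|y|)y$, evaluated at the frozen value $x=X_{t-}$. Write this map in polar coordinates: a jump of size $y=r\theta$ (with $r=|y|>0$, $\theta\in S^{d-1}$) is sent to $c(x)\theta\, |h_\alpha(x,\theta)|\, r$ up to direction — more precisely, since $m_\alpha(x,\theta)=|h_\alpha(x,\theta)|^\alpha$ is homogeneous of degree zero in $\theta$, I would first absorb the scalar $|h_\alpha(x,\theta)|$ into the radial variable and then apply the linear map $c(x)$. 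For a spherically symmetric $\alpha$-stable L\'evy measure, the image under a nonsingular linear map $A$ is $\frac{1}{|\det A|}\,\frac{dy}{|A^{-1}y|^{d+\alpha}}$; this is the standard formula obtained by writing $\nu_0$ in the form (radial density $r^{-1-\alpha}dr$) $\times$ (uniform measure on the sphere) and tracking how $A$ distorts both factors, or equivalently directly from the Jacobian of $y\mapsto Ay$ together with $|Ay|$-homogeneity. Carrying out the substitution $y\mapsto c(x)^{-1}y/|\cdots|$ inside $m_\alpha$ and collecting the Jacobian factor $|\det c(x)|^{-1}$ and the radial factor $|c(x)^{-1}(y/|y|)|^{-d-\alpha}$ yields exactly $\tilde m_\alpha(x,y/|y|)|y|^{-d-\alpha}$. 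The role of the scalar $h_\alpha$ is exactly to contribute, after the degree-zero homogeneity is used, the factor $m_\alpha(x,\cdot)$ evaluated at the normalized direction $c(x)^{-1}(y/|y|)/|c(x)^{-1}(y/|y|)|$, as written; I would be careful to check that homogeneity of $m_\alpha$ in its second argument makes the ``$r$'' and ``$\theta$'' parts decouple cleanly so that no extra angular density survives beyond what the linear change of variables produces.

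Once the compensator is identified, the martingale-measure claim is the statement that $q^X=p^X-\big(\text{compensator}\big)$ is a martingale measure, which is the defining property of the predictable compensator (dual predictable projection) of an integer-valued random measure; equivalently it follows from It\^o's formula / the L\'evy system of $X$, or one can invoke the known semimartingale characteristics of the solution to (\ref{one}) as constructed in the references cited in the introduction (\cite{MiP923}, \cite{AbK09}). The main obstacle I expect is purely bookkeeping: making the polar-coordinate change of variables precise — in particular handling the nonlinearity coming from $h_\alpha$ depending on the direction $y/|y|$ rather than being a constant matrix — and confirming that because $m_\alpha(x,\cdot)$ is homogeneous of degree zero this nonlinearity collapses to a pure reweighting of directions with no leftover radial correction. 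There is also a minor subtlety to record, namely that for $\alpha\le 1$ the jumps near the origin are summable against the pushed-forward measure only after one recalls $\int_{|y|\le1}|y|\,\tilde m_\alpha(x,y/|y|)|y|^{-d-\alpha}dy<\infty$ fails and one must keep the large/small jump split as in (\ref{one}); but since the statement only asserts that $q^X$ is a martingale measure, not that $p^X$ itself is integrable, this is handled by the same splitting already built into the equation and into the definition of $a_\alpha$.
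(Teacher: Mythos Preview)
Your proposal is correct and follows essentially the same route as the paper's own proof: split the jump measure of $X$ into the contributions from $p_0$ and $p$ (which have no common jumps), push forward the compensator $\pi(dv)\,dt$ through $v\mapsto l_\alpha(X_{t-},v)$ for the second term, and for the first term pass to polar coordinates, absorb the scalar $h_\alpha(x,\theta)$ into the radial variable (producing the factor $m_\alpha=|h_\alpha|^\alpha$), and then perform the linear change of variables $y\mapsto c(x)y$ to obtain $\tilde m_\alpha(x,y/|y|)|y|^{-d-\alpha}\,dy$. The paper's proof is exactly this two-step change of variables written out explicitly; your remarks about integrability near the origin are unnecessary here but harmless.
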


\begin{proof}
Since $p_{0}$ and $p$ have no common jumps, for any $t$ and $\Gamma \in 
\mathcal{B}(\mathbf{R}_{0}^{d}),$ 
\begin{eqnarray*}
p^{X}((0,t]\times \Gamma ) &=&\sum_{s\leq t}\mathbf{1}_{\Gamma }(\Delta
X_{t})=\int_{0}^{t}\int_{\mathbf{R}_{0}^{d}}\mathbf{1}_{\Gamma
}(c(X_{s-})h_{\alpha }(X_{s-},\frac{y}{|y|})y)p_{0}(ds,dy) \\
&&+\int_{0}^{t}\int_{U}\mathbf{1}_{\Gamma }(l_{\alpha }(X_{s-},\upsilon
))p(ds,d\upsilon ),
\end{eqnarray*}%
with $\Delta X_{t}=X_{t}-X_{t-},0<t,\Gamma \in \mathcal{B}(\mathbf{R}%
_{0}^{d})$. Passing to polar coordinates and changing the variable of
integration twice%
\begin{eqnarray*}
&&\int_{\mathbf{R}_{0}^{d}}\mathbf{1}_{\Gamma }(c(x)h_{\alpha }(x,\frac{y}{%
|y|})y)\frac{dy}{|y|^{d+\alpha }} \\
&=&\int_{0}^{\infty }\int_{S^{d-1}}\mathbf{1}_{\Gamma }(c(x)h_{\alpha
}(x,w)\rho w)\mu _{d-1}(dw)\frac{d\rho }{\rho ^{1+\alpha }} \\
&=&\int_{0}^{\infty }\int_{S^{d-1}}\mathbf{1}_{\Gamma }(c(x)\rho w)\mu
_{d-1}(dw)\frac{h_{\alpha }(x,w)^{\alpha }d\rho }{\rho ^{1+\alpha }} \\
&=&\int_{\mathbf{R}_{0}^{d}}\mathbf{1}_{\Gamma }(c(x)y)\frac{h_{\alpha }(x,%
\frac{y}{|y|})^{\alpha }dy}{|y|^{d+\alpha }}=\int_{\mathbf{R}_{0}^{d}}%
\mathbf{1}_{\Gamma }(y)\tilde{m}_{\alpha }(x,\frac{y}{|y|})\frac{dy}{%
|y|^{d+\alpha }}.
\end{eqnarray*}%
The statement follows.
\end{proof}

For $u\in C^{\alpha +\beta }(H)$, denote 
\begin{equation*}
A_{y}^{(\alpha )}u(t,x)=u(t,x+y)-u(t,x)-\chi ^{(\alpha )}(y)(\nabla
_{x}u(t,x),y),
\end{equation*}%
where $\chi ^{(\alpha )}(y)=\mathbf{1}_{\{|y|\leq 1\}}\mathbf{1}_{\{\alpha
=1\}}+\mathbf{1}_{\{\alpha \in (1,2)\}}$. Let%
\begin{eqnarray}
\mathcal{A}_{z}^{(\alpha )}u(t,x) &=&\mathbf{1}_{\{\alpha
=1\}}(a_{1}(z),\nabla _{x}u(t,x))+\frac{1}{2}\mathbf{1}_{\{\alpha
=2\}}\sum_{i,j=1}^{d}B^{ij}(z)\partial _{ij}^{2}u(t,x)  \notag  \label{ff3}
\\
&&+\int_{\mathbf{R}_{0}^{d}}A_{y}^{(\alpha )}u(t,x)\tilde{m}_{\alpha }(z,%
\frac{y}{|y|})\frac{dy}{|y|^{d+\alpha }},~x,z\in \mathbf{R}^{d}, \\
\mathcal{A}^{(\alpha )}u(t,x) &=&\mathcal{A}_{x}^{(\alpha )}u(t,x)=\mathcal{A%
}_{z}^{(\alpha )}u(t,x)|_{z=x},x\in \mathbf{R}^{d}.  \notag
\end{eqnarray}%
Let 
\begin{eqnarray}
\mathcal{B}_{z}^{(\alpha )}u(t,x) &=&\mathbf{1}_{\{\alpha \in
(1,2]\}}(a(z),\nabla _{x}u(t,x))+\int_{U}\big[u(t,x+l_{\alpha }(z,\upsilon
))-u(t,x)  \notag  \label{ff10} \\
&&-\mathbf{1}_{\{\alpha \in (1,2]\}}\mathbf{1}_{U_{1}}(\upsilon )(\nabla
_{x}u(t,x),l_{\alpha }(z,\upsilon ))\big]\pi (d\upsilon ), \\
\mathcal{B}^{(\alpha )}u(t,x) &=&\mathcal{B}_{x}^{(\alpha )}u(t,x)=\mathcal{B%
}_{z}^{(\alpha )}u(t,x)|_{z=x},x\in \mathbf{R}^{d}.  \notag
\end{eqnarray}

\begin{remark}
\label{lrenew1}Under assumptions \textup{A1-A4($\beta )$}, for any $\beta >0$%
, there exists a unique weak solution to equation \textup{(\ref{one})} and
for every $u\in C^{\alpha +\beta }(\mathbf{R}^{d}),$ the stochastic process 
\begin{equation}
u(X_{t})-\int_{0}^{t}(\mathcal{A}^{(\alpha )}+\mathcal{B}^{(\alpha
)})u(X_{s})ds  \label{ff25}
\end{equation}%
is a martingale $($see~\textup{\cite{MiP923}}$)$. The operator $\mathcal{L}%
^{(\alpha )}=\mathcal{A}^{(\alpha )}+\mathcal{B}^{(\alpha )}$ is the
generator of $X_{t}$ defined in \textup{(\ref{one})}; $\mathcal{A}^{(\alpha
)}$ is the principal part of $\mathcal{L}^{(\alpha )}$ and $\mathcal{B}%
^{(\alpha )}$ is the lower order or subordinated part of $\mathcal{L}%
^{(\alpha )}$.
\end{remark}

\begin{remark}
If $m^{(\alpha )}=1$, $(B^{ij})=I$ $(d\times d$-identity matrix$)$, $%
a_{1}(z)=0$, then $\mathcal{A}^{(\alpha )}$ is the generator of a standard
spherically-symmetric $\alpha $-stable process 
\begin{eqnarray}
Z_{t} &=&\int_{0}^{t}\int yq^{Z}(ds,dy),\alpha \in (1,2),  \notag \\
Z_{t} &=&\int_{0}^{t}\int_{|y|\leq
1}yq^{Z}(ds,dy)+\int_{0}^{t}\int_{|y|>1}yp^{Z}(ds,dy),\alpha =1,  \label{st}
\\
Z_{t} &=&\int_{0}^{t}\int yp^{Z}(ds,dy),\alpha \in (0,1),  \notag
\end{eqnarray}%
where $p^{Z}(ds,dy)$ is the jump measure of $Z$ and 
\begin{equation*}
q^{Z}(ds,dy)=p^{Z}(ds,dy)-\frac{dyds}{|y|^{d+\alpha }}
\end{equation*}%
is the martingale measure; $Z_{t}$ is the standard Wiener process if $\alpha
=2.$
\end{remark}

We consider in H\"{o}lder-Zygmund spaces the backward Kolmogorov equation
associated with $X_{t}$ (see \cite{Tal84}, \cite{MiZ10}):%
\begin{eqnarray}
\big( \partial _{t}+\mathcal{A}_{x}^{(\alpha )}+\mathcal{B}_{x}^{(\alpha
)}-\lambda \big) v(t,x) &=&f(t,x),  \notag \\
v(T,x) &=&0  \label{eqn:cauchy_prf}
\end{eqnarray}%
with $\lambda \geq 0$. The regularity of its solution is essential for the
one step estimate that determines the rate of convergence.

\begin{definition}
\label{def1}Let $f$ be a bounded measurable function on $\mathbf{R}^{d}$. We
say that $u\in C^{\alpha +\beta }(H)$ is a solution to $(\ref{eqn:cauchy_prf}%
)$, if for each $(t,x)\in H$,%
\begin{equation}
u(t,x)=\int_{0}^{t}\big[\big( \partial _{t}+\mathcal{L}^{(\alpha )}-\lambda %
\big) u(s,x)-\lambda u(s,x)+f(s,x)\big]ds.  \label{defs}
\end{equation}
\end{definition}

\begin{theorem}
\label{thm:StoCP} Let $\alpha \in (0,2]$, $\beta >0,\beta \notin \mathbf{N}$%
, and $f\in C^{\beta }(H)$. Assume \textup{A1}-\textup{A4$(\beta )$} hold.
Then there exists a unique solution $v\in C^{\alpha +\beta }(H)$ to \textup{(%
\ref{eqn:cauchy_prf})}. Moreover, there is a constant $C$ independent of $f$
such that%
\begin{equation*}
|u|_{\alpha +\beta }\leq C|f|_{\beta }.
\end{equation*}
\end{theorem}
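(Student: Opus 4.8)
The plan is to obtain the solution by the standard parametrix/continuity method in the scale of H\"older--Zygmund spaces, treating $\mathcal A^{(\alpha)}_x$ as the principal part and $\mathcal B^{(\alpha)}_x$ as a lower-order perturbation. First I would establish the a~priori Schauder estimate for the model equation, i.e. the case where the jump intensity in the principal part is frozen: for fixed $z$, the operator $\mathcal A^{(\alpha)}_z$ is (after the linear change of variables $y\mapsto c(z)y$) comparable to the fractional Laplacian $-|\partial|^\alpha$ (when $\alpha\in(0,2)$) or to a constant-coefficient second-order elliptic operator (when $\alpha=2$), by Lemma \ref{le1} and assumption A1(i), which gives uniform non-degeneracy of $\tilde m_\alpha(z,\cdot)$. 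For this model problem one has the solution operator via the Fourier transform / stable semigroup, and the key estimate $|v|_{\alpha+\beta}\le C|f|_\beta$ follows from the mapping properties of the fractional heat semigroup on $C^\beta(\mathbf R^d)=B^\beta_{\infty\infty}$ together with the smoothing of order $\alpha$ in the time integral; this is exactly the type of estimate proved in \cite{MiZ10}, and I would cite the one-step estimates there.

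Next I would pass from frozen to variable coefficients in the principal part by the usual freezing-the-coefficients argument: write $\mathcal A^{(\alpha)}_x = \mathcal A^{(\alpha)}_{x_0} + (\mathcal A^{(\alpha)}_x-\mathcal A^{(\alpha)}_{x_0})$, localize with a partition of unity of small diameter, and use that the H\"older norm $M^{(\alpha)}_\beta<\infty$ from A2$(\beta)$ makes the commutator term have small operator norm from $C^{\alpha+\beta}$ to $C^\beta$ on each small ball, plus a lower-order remainder controlled by interpolation $|v|_{\alpha+\beta'}\le \varepsilon|v|_{\alpha+\beta}+C_\varepsilon|v|_\infty$. This produces the a~priori bound $|v|_{\alpha+\beta}\le C(|f|_\beta+|v|_\infty)$ for solutions of $(\partial_t+\mathcal A^{(\alpha)}_x-\lambda)v=f$, $v(T,\cdot)=0$, and then $|v|_\infty\le C|f|_\infty$ follows from the probabilistic (Feynman--Kac) representation or the maximum principle, using $\lambda\ge 0$. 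Existence is obtained by the method of continuity along the family $\mathcal A^{(\alpha)}_{x,\theta}=(1-\theta)(-|\partial|^\alpha)+\theta\,\mathcal A^{(\alpha)}_x$ (resp.\ with $\Delta$ when $\alpha=2$), $\theta\in[0,1]$, since the endpoint $\theta=0$ is solvable and the a~priori estimate is uniform in $\theta$.

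Finally I would incorporate the subordinated part $\mathcal B^{(\alpha)}_x$. The content of assumptions A2--A4$(\beta)$ (in particular the exponent $\beta'$ with $\beta\le\alpha+\beta'<\alpha+\beta$) is precisely that $\mathcal B^{(\alpha)}_x$ maps $C^{\alpha+\beta}(H)$ into $C^{\beta'}(H)$ with $\beta'<\beta+\alpha-\alpha=\beta$... more precisely into a space with strictly less than $\alpha$ orders of smoothing deficit, so that $\mathcal B^{(\alpha)}$ composed with the solution operator $\mathcal R$ of $(\partial_t+\mathcal A^{(\alpha)}_x-\lambda)$ is a bounded operator on $C^{\alpha+\beta}(H)$ whose norm can be made $<1$ by taking $\lambda$ large (the $\lambda$-dependence again comes from the resolvent estimate for the principal part). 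Hence $v=\mathcal R f+\mathcal R\mathcal B^{(\alpha)}v$ is solved by Neumann series for $\lambda$ large; for general $\lambda\ge 0$ one absorbs $(\lambda-\lambda_0)v$ into the right-hand side and iterates on short time intervals, or uses that uniqueness plus the a~priori estimate upgrades the large-$\lambda$ solvability to all $\lambda\ge0$. Uniqueness follows from the a~priori estimate applied to the difference of two solutions with $f=0$. The main obstacle is the careful bookkeeping in the last step: verifying that the particular combination of $\pi$-integrability and H\"older conditions packaged in A2--A4$(\beta)$ really yields the claimed $C^{\alpha+\beta}\to C^{\beta}$ (with a gain, via the $\beta'$ margin) boundedness of $\mathcal B^{(\alpha)}$ uniformly, including across the thresholds $\beta<1$ vs.\ $\beta>1$ and the case distinctions $\alpha\in(0,1),\{1\},(1,2),\{2\}$ that appear throughout $\mathcal A^{(\alpha)}$, $\mathcal B^{(\alpha)}$ and $a_\alpha$.
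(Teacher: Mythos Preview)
Your proposal is correct and follows essentially the same architecture as the paper: a Schauder estimate for the frozen-coefficient problem (the paper's Proposition~\ref{prop1}), localization via a partition of unity to pass to variable coefficients in $\mathcal A^{(\alpha)}$, treatment of $\mathcal B^{(\alpha)}$ as a lower-order perturbation (the paper packages this as the interpolation inequality $|\mathcal B^{(\alpha)}f|_\beta\le \varepsilon|f|_{\alpha+\beta}+C_\varepsilon|f|_{\beta-[\beta]}$ in Proposition~\ref{b1}), absorption of lower-order terms via large $\lambda$, and existence by the method of continuity. The only organizational differences are that the paper handles $\mathcal B^{(\alpha)}$ simultaneously with the freezing step rather than in a separate Neumann-series stage, and closes the a~priori estimate using the resolvent bound $|u|_\beta\le C(\lambda^{-1}\wedge T)|f|_\beta$ from the constant-coefficient theory rather than a maximum principle or Feynman--Kac argument; the exponential substitution $\tilde u=e^{-(\lambda_0-\lambda)t}u$ then extends from large $\lambda$ to all $\lambda\ge 0$.
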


An immediate consequence of this theorem is the following statement.

\begin{corollary}
\label{lcornew1}Let $\alpha \in (0,2]$ and $\beta >0,\beta \notin \mathbf{N}$%
. Assume \textup{A1-A4$(\beta )$} hold, $f\in C^{\beta }(H)$, and $g\in
C^{\alpha +\beta }(\mathbf{R}^{d})$. Then there exists a unique solution $%
v\in C^{\alpha +\beta }(H)$ to the Cauchy problem 
\begin{eqnarray}
\big( \partial _{t}+\mathcal{A}_{x}^{(\alpha )}+\mathcal{B}_{x}^{(\alpha )}%
\big) v(t,x) &=&f(x),  \label{maf8} \\
v(T,x) &=&g(x),  \notag
\end{eqnarray}%
and $|v|_{\alpha +\beta }\leq C(|f|_{\beta }+|g|_{\alpha +\beta })$ with a
constant $C$ independent of $f$ and $g$.
\end{corollary}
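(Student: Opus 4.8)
The plan is to reduce the terminal-value problem (\ref{maf8}) to the case of zero terminal data, which is exactly Theorem \ref{thm:StoCP} with $\lambda =0$. Regard $g$ as an element of $C^{\alpha +\beta }(H)$ that does not depend on $t$; then its $C^{\alpha +\beta }(H)$-norm is $|g|_{\alpha +\beta }$ and $\partial _{t}g\equiv 0$. Since the problem is linear and $g$ is time-independent, a function $v\in C^{\alpha +\beta }(H)$ with $v(T,\cdot )=g$ solves (\ref{maf8}) --- in the mild sense of Definition \ref{def1}, adapted in the obvious way to the terminal value $g$ --- if and only if $w:=v-g\in C^{\alpha +\beta }(H)$ solves
\begin{equation*}
\big(\partial _{t}+\mathcal{A}_{x}^{(\alpha )}+\mathcal{B}_{x}^{(\alpha )}\big)w(t,x)=\tilde{f}(x),\qquad w(T,\cdot )=0,
\end{equation*}
with $\tilde{f}:=f-\mathcal{L}^{(\alpha )}g$.

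The one point that is not purely formal is that $\tilde{f}\in C^{\beta }(H)$ with $|\tilde{f}|_{\beta }\leq |f|_{\beta }+C|g|_{\alpha +\beta }$; equivalently, that $\mathcal{L}^{(\alpha )}=\mathcal{A}^{(\alpha )}+\mathcal{B}^{(\alpha )}$ maps $C^{\alpha +\beta }(\mathbf{R}^{d})$ boundedly into $C^{\beta }(\mathbf{R}^{d})$. For the principal part $\mathcal{A}^{(\alpha )}$ this is the standard boundedness, between H\"{o}lder-Zygmund spaces, of a non-degenerate integro-differential operator of order $\alpha $ whose symbol $\tilde{m}_{\alpha }(\cdot ,y/|y|)$, given explicitly in Lemma \ref{le1}, is $C^{\beta }$ in $x$ uniformly in $y$ --- a consequence of \textup{A1-A2}$(\beta )$ together with $\inf _{x}|\det c(x)|>0$. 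For the subordinate part $\mathcal{B}^{(\alpha )}$ one bounds the drift $\mathbf{1}_{\{\alpha \in (1,2]\}}(a,\nabla u)$ and the jump integral $\int_{U}[u(x+l_{\alpha }(x,\upsilon ))-u(x)-\mathbf{1}_{\{\alpha \in (1,2]\}}\mathbf{1}_{U_{1}}(\upsilon )(\nabla u(x),l_{\alpha }(x,\upsilon ))]\pi (d\upsilon )$ in $C^{\beta }$, using precisely the quantities controlled by \textup{A2-A4}$(\beta )$; these are the same estimates that enter the one-step bounds and the construction of the solution behind Theorem \ref{thm:StoCP} and Remark \ref{lrenew1}, so nothing genuinely new is needed.

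Granting this, Theorem \ref{thm:StoCP} applied with $\lambda =0$ and right-hand side $\tilde{f}\in C^{\beta }(H)$ yields a unique $w\in C^{\alpha +\beta }(H)$ solving the displayed problem, with $|w|_{\alpha +\beta }\leq C|\tilde{f}|_{\beta }\leq C(|f|_{\beta }+|g|_{\alpha +\beta })$. Then $v:=w+g$ lies in $C^{\alpha +\beta }(H)$, satisfies $v(T,\cdot )=g$ and (\ref{maf8}), and $|v|_{\alpha +\beta }\leq |w|_{\alpha +\beta }+|g|_{\alpha +\beta }\leq C(|f|_{\beta }+|g|_{\alpha +\beta })$. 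For uniqueness, if $v_{1},v_{2}\in C^{\alpha +\beta }(H)$ both solve (\ref{maf8}), then $v_{1}-v_{2}\in C^{\alpha +\beta }(H)$ solves $(\partial _{t}+\mathcal{A}_{x}^{(\alpha )}+\mathcal{B}_{x}^{(\alpha )})(v_{1}-v_{2})=0$ with zero terminal value, and the uniqueness assertion of Theorem \ref{thm:StoCP} (case $\lambda =0$, $f=0$) forces $v_{1}=v_{2}$.

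I do not expect a real obstacle: the corollary is a soft consequence of Theorem \ref{thm:StoCP} via the classical reduction of nonzero terminal data to zero terminal data. The only substantive ingredient is the mapping property $\mathcal{L}^{(\alpha )}:C^{\alpha +\beta }(\mathbf{R}^{d})\to C^{\beta }(\mathbf{R}^{d})$, which is already embedded in the proof of Theorem \ref{thm:StoCP}; and the only thing to verify carefully is that the mild formulation is stable under the shift $v\mapsto v-g$, which holds because $g$ is time-independent and the problem is linear.
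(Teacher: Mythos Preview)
Your proposal is correct and follows essentially the same route as the paper: subtract the terminal datum $g$ to reduce to a zero-terminal-value problem with right-hand side $f-\mathcal{L}^{(\alpha )}g$, then invoke Theorem \ref{thm:StoCP} with $\lambda =0$. The only refinement is that the paper pins down the mapping property $\mathcal{L}^{(\alpha )}:C^{\alpha +\beta }\to C^{\beta }$ by citing Corollary~14 in \cite{MiZ10} for $\mathcal{A}^{(\alpha )}$ and Proposition \ref{b1} for $\mathcal{B}^{(\alpha )}$, rather than leaving it as ``embedded in the proof of Theorem \ref{thm:StoCP}''.
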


To prove Theorem \ref{thm:StoCP} and Corollary \ref{lcornew1}, we first
derive H\"{o}lder norm estimates of $\mathcal{A}^{(\alpha )}f$ and $\mathcal{%
B}^{(\alpha )}f$, $f\in C^{\alpha +\beta }(\mathbf{R}^{d})$, $\beta >0$, and
an auxiliary lemma about uniform convergence of H\"{o}lder functions.


\subsection{Kolmogorov Equation with Constant Coefficients}

Let $B=(B^{ij})_{1\leq i,j\leq d}$ be a non-negative definite non-degenerate
matrix. Let $r_{\alpha }(y)$ be homogeneous with index zero and
differentiable in $y$ up to the order $d_{0}=[d/2]+1$ and%
\begin{equation*}
\int_{S^{d-1}}wr_{1}(w)\mu _{d-1}(dw)=0,r_{2}=0.
\end{equation*}%
Let 
\begin{eqnarray*}
A_{\alpha }^{0}u(x) &=& \mathbf{1}_{\{\alpha=2\}} B^{ij}\partial
_{ij}^{2}u(x) + \mathbf{1}_{\{\alpha =1\}} a_{1}^{i}\partial _{i}u(x)+\int_{%
\mathbf{R}^{d}}\big[u(x+y)-u(x) \\
&&-(\mathbf{1}_{\{|y|\leq 1\}} \mathbf{1}_{\{\alpha =1\}}+\mathbf{1}%
_{\{1<\alpha <2\}})(\nabla u(x),y)\big]r_{\alpha }(y)\frac{dy}{|y|^{d+\alpha
}}.
\end{eqnarray*}%
In terms of Fourier transform, 
\begin{equation*}
A_{\alpha }^{0}u(x)=\mathcal{F}^{-1}\big[ \psi _{\alpha }^{0}(\xi )\mathcal{F%
}u(\xi )\big] (x),
\end{equation*}%
where%
\begin{eqnarray*}
\psi _{\alpha }^{0}(\xi ) &=& -N\int_{S^{d-1}}|(w,\xi )|^{\alpha }\big[1-i%
\big(\mathbf{1}_{\{\alpha \neq 1\}}\tan \frac{\alpha \pi }{2}\mathrm{sgn}%
(w,\xi ) \\
&&- \frac{2}{\pi }\mathbf{1}_{\{\alpha =1\}}\mathrm{sgn}(w,\xi )\ln |(w,\xi
)|\big)\big]r_{\alpha }(w)\mu _{d-1}(dw) \\
&&-i\mathbf{1}_{\{\alpha =1\}}(a_{1},\xi )-\mathbf{1}_{\{\alpha =2\}}(B\xi
,\xi ),
\end{eqnarray*}%
where $a_{1}\in \mathbf{R}^{d}$. \ We will need the following assumptions.

\textbf{B. }(i) There is a constant $\mu >0$ such that for all $|\xi |=1,$ 
\begin{eqnarray*}
(B\xi ,\xi ) &\geq &\mu ,\mbox{ if }\alpha =2, \\
\int_{S^{d-1}}|(w,\xi )|^{\alpha }r_{\alpha }(w)\mu _{d-1}(dw) &\geq &\mu ,%
\mbox{
if }\alpha \in (0,2);
\end{eqnarray*}

(ii) There is a constant $K$ such that%
\begin{equation*}
|a_{1}|+|B|+\sup_{|\gamma |\leq d_{0},y\in \mathbf{R}^{d}}|\partial ^{\gamma
}r_{\alpha }(y)|\leq K.
\end{equation*}

Consider for $\lambda \geq 0$ the Cauchy problem 
\begin{eqnarray}  \label{n1}
\begin{array}{rcll}
\partial_{t}u(t,x) & = & A_{\alpha }^{0}u(t,x)-\lambda u(t,x)+f(x), & 
(t,x)\in H \\ 
u(0,x) & = & 0, & x\in \mathbf{R}^{d}.%
\end{array}%
\end{eqnarray}

We will solve this equation for $f\in C_{b}^{\infty }(\mathbf{R}^{d})$ and
pass to the limit. The following approximation statement is needed.

\begin{lemma}
\label{pagle1}Let $\beta >0,f\in C^{\beta }(\mathbf{R}^{d})$. Then there is
a sequence $f_{n}\in C_{b}^{\infty }(\mathbf{R}^{d})$ such that%
\begin{equation*}
|f_{n}|_{\beta }\leq 2|f|_{\beta },|f|_{\beta }\leq \lim
\inf_{n}|f_{n}|_{\beta },
\end{equation*}%
and for any $0<\beta ^{\prime }<\beta $, 
\begin{equation*}
|f_{n}-f|_{\beta ^{\prime }}\rightarrow 0,\mbox{as $n\rightarrow \infty$}.
\end{equation*}
\end{lemma}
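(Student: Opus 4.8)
The plan is to use a standard mollification combined with a Littlewood--Paley (or simple Fourier cutoff) decomposition, but arranged so that the approximating sequence does \emph{not} overshoot the H\"older norm. First I would take a fixed function $\varphi\in C_0^\infty(\mathbf{R}^d)$ with $\int\varphi=1$, $\varphi\geq 0$, supported in the unit ball, and set $\varphi_\varepsilon(x)=\varepsilon^{-d}\varphi(x/\varepsilon)$. The naive choice $f\ast\varphi_{1/n}$ already gives $f_n\in C_b^\infty(\mathbf{R}^d)$ and convergence $|f_n-f|_{\beta'}\to 0$ for every $\beta'<\beta$: this is the classical fact that mollification converges in the lower-index H\"older--Zygmund norm, proved by writing $\partial^\gamma(f_n-f)$, using the defining finite-difference characterization of $C^{\beta'}$, and splitting the difference quotients at scale $1/n$ (for increments of size $\geq 1/n$ one uses the uniform bound on $\partial^{[\beta']^-}f$, for increments of size $<1/n$ one uses the $\{\beta\}^+$-H\"older seminorm of $\partial^{[\beta]^-}f$, gaining a factor $(1/n)^{\beta-\beta'}\to 0$). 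One also gets $|f_n|_\beta\leq |f|_\beta$ automatically, since convolution with a probability measure is an averaging operation that does not increase any sup-norm of a derivative nor any (second-)difference seminorm --- just move the convolution onto $f$ inside the seminorm and apply Minkowski/Jensen.

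The one genuinely delicate point is the reverse inequality $|f|_\beta\leq\liminf_n|f_n|_\beta$. This does \emph{not} follow from $C^{\beta'}$-convergence alone, because the $C^\beta$ norm is not lower semicontinuous with respect to convergence in a strictly weaker topology in a way that is totally automatic; however it does follow from pointwise convergence of $f_n$ and its derivatives together with the structure of the $C^\beta$ seminorm. Concretely, since $f_n\to f$ uniformly and $\partial^\gamma f_n\to\partial^\gamma f$ uniformly for $|\gamma|\leq[\beta]^-$ (again classical for mollification of a $C^{[\beta]^-}$ function, and in fact $\partial^{[\beta]^-}f$ is itself uniformly continuous so its mollifications converge uniformly), each term $\sup_{(t,x)}|\partial_x^\gamma f(t,x)|$ equals $\lim_n\sup\,|\partial_x^\gamma f_n|\leq\liminf_n|f_n|_\beta$ after fixing a near-maximizing point; and for the top seminorm one fixes $(t,x,\tilde x)$ (or $(t,x,h)$ in the Zygmund case $\{\beta\}^+=1$), writes the relevant first or second difference of $\partial_x^{[\beta]^-}f$ as the pointwise limit of the same difference of $\partial_x^{[\beta]^-}f_n$, divides by $|x-\tilde x|^{\{\beta\}^+}$ (resp. $|h|^{\{\beta\}^+}$), and bounds it by $\liminf_n|f_n|_\beta$; taking the supremum over the configuration gives $|f|_\beta\leq\liminf_n|f_n|_\beta$. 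Combined with $|f_n|_\beta\leq|f|_\beta\leq 2|f|_\beta$ this yields all three assertions.

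I expect the main obstacle to be purely bookkeeping: handling uniformly the two separate definitions of $C^\beta$ (the H\"older case $\{\beta\}^+\in(0,1)$ versus the Zygmund case $\{\beta\}^+=1$), and, in the Zygmund case, verifying that convolution does not increase the second-difference seminorm and that the second differences pass to the limit --- both are true because $\partial_x^{[\beta]^-}f_n=(\partial_x^{[\beta]^-}f)\ast\varphi_{1/n}$ and second differences commute with convolution. A minor technical remark: the statement is for functions on $\mathbf{R}^d$ (the time variable, if present, is a harmless parameter), so the mollification is in $x$ only and all estimates are uniform in $t$. No compactness or abstract functional-analytic input is needed; everything reduces to the elementary estimates above, so the proof is short once the two cases are treated side by side.
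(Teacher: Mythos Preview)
Your proposal is correct, but it takes a different route from the paper. You build $f_n=f\ast\varphi_{1/n}$ by mollification and argue directly from the H\"older--Zygmund definition: convolution with a probability measure contracts each piece of the norm (giving in fact $|f_n|_\beta\le |f|_\beta$, a sharper constant than the stated $2$); pointwise convergence of $\partial^\gamma f_n$ to $\partial^\gamma f$ for $|\gamma|\le[\beta]^-$ gives the $\liminf$ inequality by lower semicontinuity of each sup and (second-)difference quotient; and $|f_n-f|_{\beta'}\to0$ follows from the scale-splitting at $|x-\tilde x|\sim 1/n$, with gain $(1/n)^{\beta-\beta'}$ on both sides.

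The paper instead sets $f_n=\sum_{k=0}^n\varphi_k\ast f$, the Littlewood--Paley partial sum, and works in the equivalent Besov norm $|f|_{\beta;\infty\infty}=\sup_{k\ge0}2^{\beta k}|\varphi_k\ast f|_\infty$ (citing Triebel). The almost-orthogonality $\varphi_k\ast\varphi_j=0$ for $|j-k|>1$ then reduces all three assertions to one-line computations: $f_n\ast\varphi_k=f\ast\varphi_k$ for $k<n$, only the two edge terms $k=n,n+1$ differ, whence the factor $2$, and the tail $\sup_{k\ge n}2^{\beta' k}|\varphi_k\ast f|\le 2^{-(\beta-\beta')n}|f|_\beta$ gives the $C^{\beta'}$ convergence. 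Your argument is more elementary---no Littlewood--Paley machinery or norm-equivalence result is needed---while the paper's approach is more systematic and transfers immediately to general Besov scales. Either suffices for the application here, which is only to approximate the right-hand side of the constant-coefficient equation.
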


\begin{proof}
By Lemma 6.1.7 in \cite{BeL76}, there exists a function $\phi \in
C_{0}^{\infty }(\mathbf{R}^{d})$ such that supp$\phi (\xi )=\{\xi :\frac{1}{2%
}\leq |\xi |\leq 2\},\phi (\xi )>0$, if $2^{-1}<|\xi |<2$, and%
\begin{equation*}
\sum_{j=-\infty }^{\infty }\phi (2^{-j}\xi )=1\text{, if }\xi \neq 0.
\end{equation*}%
Define functions $\varphi _{k}\in \mathcal{S}(\mathbf{R}^{d}),k=1,2,\ldots $
by%
\begin{equation}
\mathcal{F\varphi }_{k}=\phi (2^{-k}\xi ),  \label{fu1}
\end{equation}%
and $\varphi _{0}\in \mathcal{S}(\mathbf{R}^{d})$ by%
\begin{equation}
\mathcal{F}\varphi _{0}=1-\sum_{k\geq 1}\varphi _{k}(\xi ).  \label{fu2}
\end{equation}%
We will use on $C^{\beta }(\mathbf{R}^{d})$ an equivalent norm (see see
2.3.8 and 2.3.1 in \cite{Tri83}%
\begin{equation*}
|f|_{\beta ;\infty \infty }=\sup_{k\geq 0,x\in \mathbf{R}^{d}}2^{\beta
k}|\varphi _{k}\ast f(x)|.
\end{equation*}%
Obviously, $f_{n}\in C_{b}^{\infty }(\mathbf{R}^{d})$. Let 
\begin{equation*}
f_{n}=\sum_{k=0}^{n}\varphi _{k}\ast f,n\geq 1.
\end{equation*}%
Since%
\begin{equation*}
\varphi _{k}=\sum_{l=-1}^{1}\varphi _{k+l}\ast \varphi _{k},\varphi
_{0}=(\varphi _{0}+\varphi _{1})\ast \varphi _{0},
\end{equation*}%
we have for large $n,$%
\begin{eqnarray*}
f_{n}\ast \varphi _{k} &=&f\ast \varphi _{k},k<n, \\
f_{n}\ast \varphi _{n} &=&f\ast \varphi _{n}-f\ast \varphi _{n+1}\ast
\varphi _{n}, \\
f_{n}\ast \varphi _{n+1} &=&f\ast \varphi _{n}\ast \varphi _{n+1},f_{n}\ast
\varphi _{k}=0,k>n+1,
\end{eqnarray*}%
and the statement follows.%
\begin{equation*}
|f_{n}|_{\beta }\leq 2|f|_{\alpha ,\beta },|f|_{\beta }\leq \lim
\inf_{n}|f_{n}|_{\beta },
\end{equation*}%
and for any $0<\beta ^{\prime }<\beta $%
\begin{eqnarray*}
|f_{n}-f|_{\beta ^{\prime };\infty \infty } &\leq &2\sup_{k\geq
n}\sup_{x}2^{\beta ^{\prime }k}|\varphi _{k}\ast f(x)| \\
&\leq &2\cdot 2^{-(\beta -\beta ^{\prime })n}\sup_{k\geq n}\sup_{x}2^{\beta
k}|\varphi _{k}\ast f(x)|\rightarrow 0
\end{eqnarray*}%
as $n\rightarrow \infty .$
\end{proof}

\begin{proposition}
\label{prop1}Let $\alpha \in (0,2],\ \beta >0,\beta \notin \mathbf{N},f\in
C^{\beta }(\mathbf{R}^{d})$. Assume \textup{B} holds. Then there is a unique
solution $u\in C^{\alpha +\beta }(H)$ to $(\ref{n1})$ and 
\begin{equation*}
|u|_{\alpha +\beta }\leqslant C|f|_{\beta },
\end{equation*}%
where the constant $C$ depends only on $\alpha ,\ \beta ,\ T,\ d$, $\mu ,\
K. $ Moreover,%
\begin{equation*}
|u|_{\beta }\leq C(\alpha ,d)(\lambda ^{-1}\wedge T)|f|_{\beta },
\end{equation*}%
and there is a constant $C$ such that for all $s\leq t\leq T,$ 
\begin{equation*}
|u(t,\cdot )-u(s,\cdot )|_{\alpha /2+\beta }\leq C(t-s)^{1/2}|f|_{\beta }.
\end{equation*}
\end{proposition}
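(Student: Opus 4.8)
The plan is to work entirely on the Fourier side, where $A_\alpha^0$ becomes multiplication by the symbol $\psi_\alpha^0(\xi)$. First I would record the basic decay and ellipticity of the symbol: assumption \textbf{B}(i)--(ii) gives
$\mathrm{Re}\,\psi_\alpha^0(\xi)\le -\mu|\xi|^\alpha$ and $|\psi_\alpha^0(\xi)|\le C(K)|\xi|^\alpha$
for all $\xi$, together with estimates $|\partial_\xi^\gamma\psi_\alpha^0(\xi)|\le C|\xi|^{\alpha-|\gamma|}$ for $|\gamma|\le d_0$ (differentiating under the $S^{d-1}$ integral; the $\alpha=1$ logarithmic term is harmless since $\ln|(w,\xi)|$ is integrable against $\mu_{d-1}$ on the sphere). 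Consequently the solution operator of the homogeneous equation is the multiplier $e^{t(\psi_\alpha^0(\xi)-\lambda)}$, and for $f\in C_b^\infty(\mathbf R^d)$ the unique classical solution of $(\ref{n1})$ is the Duhamel formula
$$
u(t,x)=\int_0^t \mathcal F^{-1}\big[e^{(t-s)(\psi_\alpha^0(\xi)-\lambda)}\mathcal Ff(\xi)\big](x)\,ds .
$$
Existence and uniqueness in $C^{\alpha+\beta}(H)$ then follow by the a priori estimate below combined with the approximation Lemma \ref{pagle1}: solve for $f_n\in C_b^\infty$, pass to the limit in $C^{\alpha+\beta-\varepsilon}$, and use lower semicontinuity of the $C^{\alpha+\beta}$ norm.

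The a priori bound $|u|_{\alpha+\beta}\le C|f|_\beta$ is the heart of the matter, and I would prove it using the Littlewood--Paley characterization $|g|_{\gamma;\infty\infty}=\sup_{k\ge0,x}2^{\gamma k}|\varphi_k*g(x)|$ from the proof of Lemma \ref{pagle1}. Writing $u(t)=\int_0^t T_{t-s}f\,ds$ with $T_r=\mathcal F^{-1}[e^{r(\psi_\alpha^0-\lambda)}\,\cdot\,]$, one has $\varphi_k*u(t)=\int_0^t \varphi_k*T_r f\,dr$, and since $\varphi_k$ localizes to $|\xi|\sim 2^k$, the multiplier $e^{r\psi_\alpha^0(\xi)}$ restricted to that shell has, by the symbol estimates above, a Fourier-multiplier ($L^\infty\to L^\infty$) norm bounded by $C e^{-c\,r\,2^{\alpha k}}$ — this is a standard Mikhlin-type bound on the dyadic block, using the derivative estimates on $\psi_\alpha^0$ and the scaling $\xi=2^k\eta$. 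Hence
$$
\|\varphi_k*u(t)\|_\infty\le C\|\varphi_k*f\|_\infty\int_0^t e^{-c r 2^{\alpha k}}dr\le C\,2^{-\alpha k}\|\varphi_k*f\|_\infty .
$$
Multiplying by $2^{(\alpha+\beta)k}$ and taking the supremum over $k$ and $x$ gives $|u(t,\cdot)|_{\alpha+\beta;\infty\infty}\le C|f|_{\beta;\infty\infty}$, uniformly in $t\le T$; for $k=0$ one uses instead the crude bound $\int_0^t\!\cdot\le T$. Equivalence of norms (cited from \cite{Tri83}) converts this into $|u|_{\alpha+\beta}\le C|f|_\beta$. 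The $\lambda$-dependent refinement $|u|_\beta\le C(\lambda^{-1}\wedge T)|f|_\beta$ comes from the same computation but without spending $2^{\alpha k}$ of the decay: $\int_0^t e^{-\lambda r}e^{-c r 2^{\alpha k}}dr\le \lambda^{-1}\wedge T$, so $2^{\beta k}\|\varphi_k*u(t)\|_\infty\le C(\lambda^{-1}\wedge T)\,2^{\beta k}\|\varphi_k*f\|_\infty$.

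For the time-regularity estimate $|u(t,\cdot)-u(s,\cdot)|_{\alpha/2+\beta}\le C(t-s)^{1/2}|f|_\beta$, I would split $u(t)-u(s)=\int_s^t T_{t-r}f\,dr+\int_0^s (T_{t-r}-T_{s-r})f\,dr$. The first term is estimated exactly as above but over an interval of length $t-s$: on the $k$-th block $\int_s^t e^{-c r 2^{\alpha k}}dr\le (t-s)\wedge 2^{-\alpha k}\le (t-s)^{1/2}2^{-\alpha k/2}$, which yields the loss of $\alpha/2$ derivatives. For the second term, $T_{t-r}-T_{s-r}=T_{s-r}(T_{t-s}-\mathrm{Id})$, and on the $k$-th block $\|(T_{t-s}-\mathrm{Id})\varphi_k*g\|_\infty\le C\big((t-s)2^{\alpha k}\wedge1\big)\|\varphi_k*g\|_\infty\le C(t-s)^{1/2}2^{\alpha k/2}\|\varphi_k*g\|_\infty$, combined with the decay $\|T_{s-r}\varphi_k*\cdot\|_\infty\le Ce^{-c(s-r)2^{\alpha k}}$ integrated in $r$, giving again $2^{-\alpha k/2}(t-s)^{1/2}$; multiplying by $2^{(\alpha/2+\beta)k}$ and taking suprema finishes it.

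The main obstacle I anticipate is making the dyadic multiplier bound $\|\mathcal F^{-1}[e^{r\psi_\alpha^0}\phi(2^{-k}\cdot)\mathcal Fg]\|_\infty\le Ce^{-cr2^{\alpha k}}\|g\|_\infty$ genuinely uniform in $r$ and $k$ — this requires the Mikhlin/Bernstein constants to be independent of $r\,2^{\alpha k}$, which in turn rests on the scale-invariant derivative estimates $|\partial_\xi^\gamma\psi_\alpha^0(\xi)|\lesssim|\xi|^{\alpha-|\gamma|}$ and a careful treatment of the borderline case $\alpha=1$ with its logarithmic correction and of $\alpha=2$ where $\psi_2^0$ is a genuine quadratic. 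Once that uniform block estimate is in hand, everything else is summation over the Littlewood--Paley decomposition and routine.
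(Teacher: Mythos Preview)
Your plan is correct and follows essentially the same route as the paper: represent the solution by the Duhamel/convolution formula with the multiplier $e^{t(\psi_\alpha^0-\lambda)}$, prove the a~priori estimates by Fourier/Littlewood--Paley block analysis, and pass from $f_n\in C_b^\infty$ to general $f\in C^\beta$ via Lemma~\ref{pagle1}. The only real difference is one of packaging: the paper does not carry out the dyadic multiplier computation you sketch, but instead invokes Lemma~7 and Lemma~17 of \cite{MiP09} (which contain exactly the block estimate $\|\varphi_k*T_r f\|_\infty\le Ce^{-cr2^{\alpha k}}\|\varphi_k*f\|_\infty$ and its consequences), and it reduces the $C^{\alpha+\beta}$ bound to a low-regularity $C^{\alpha+\beta'-[\beta]}$ bound by commuting $\partial^k$, $k\le[\beta]$, through the constant-coefficient operator before applying the cited lemma. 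Your direct Littlewood--Paley argument is precisely what those lemmas encode, so the two proofs are the same in substance; yours is more self-contained, the paper's is shorter by citation.

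One technical point to watch when you carry out the ``main obstacle'' you already flagged: the derivative bound $|\partial_\xi^\gamma\psi_\alpha^0(\xi)|\le C|\xi|^{\alpha-|\gamma|}$ for $|\gamma|\le d_0$ does \emph{not} follow simply by differentiating $|(w,\xi)|^\alpha$ under the $S^{d-1}$ integral once $|\gamma|\ge\alpha+1$, since $|(w,\xi)|^{\alpha-|\gamma|}$ is then no longer integrable in $w$ near the equator $(w,\xi)=0$. You need instead to differentiate the L\'evy--Khintchine integral representation of $\psi_\alpha^0$ over $\mathbf R_0^d$ (or argue via homogeneity and smoothness of $r_\alpha$ up to order $d_0$), which is how the uniform Mikhlin bound is obtained in \cite{MiP09}. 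With that caveat, your outline goes through.
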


\begin{proof}
By Lemma \ref{pagle1} there is a sequence $f_{n}\in C_{b}^{\infty }(\mathbf{R%
}^{d})$ such that 
\begin{equation*}
|f_{n}|_{\beta }\leq 2|f|_{\beta },|f|_{\beta }\leq \lim
\inf_{n}|f_{n}|_{\beta },
\end{equation*}%
and for any $\beta ^{\prime }<\beta $ 
\begin{equation}
|f_{n}-f|_{\beta ^{\prime }}\rightarrow 0,\mbox{as $n \rightarrow \infty$.}
\label{nulis}
\end{equation}%
Then, by \ Lemma 7 in \cite{MiP09}\ for each $n$ there is a unique $u_{n}\in
C_{b}^{\infty }(H)$ solving (\ref{n1}). Moreover,%
\begin{equation*}
u_{n}(t,x)=R_{\lambda }f_{n}(t,x)=\int_{0}^{t}G_{s,t}^{\lambda }\ast
f_{n}(x)ds,
\end{equation*}%
where%
\begin{equation*}
G_{s,t}^{\lambda }(x)=\mathcal{F}^{-1}\exp \Big\{\int_{s}^{t}(\psi _{\alpha
}^{0}(r,\xi )-\lambda )dr\Big\},0\leq s\leq t\leq T.
\end{equation*}%
Since for any $k\leq \lbrack \beta ],$%
\begin{equation*}
\partial ^{k}u_{n}(t,x)=\int_{0}^{t}G_{s,t}^{\lambda }\ast \partial
^{k}f_{n}(x)ds=R_{\lambda }\big(\partial ^{k}f_{n}\big),
\end{equation*}%
it follows by Lemma 17 in \cite{MiP09} that for every $\beta ^{\prime }\in
([\beta ],\beta ]$ there is a constant $C$ depending only on $\alpha ,\
\beta ^{\prime },\ p,\ T,\ d$, $\mu ,\ K$ such that 
\begin{equation}
|\partial ^{k}u_{n}|_{\alpha +\beta ^{\prime }-[\beta ]}\leqslant C|\partial
^{k}f_{n}|_{\beta ^{\prime }-[\beta ]}  \label{vienas}
\end{equation}%
for all $k\leq \lbrack \beta ]$. Moreover,%
\begin{equation*}
|\partial ^{k}u_{n}|_{\beta ^{\prime }-[\beta ]}\leq c(\alpha ,d)(\lambda
^{-1}\wedge T)|\partial ^{k}f_{n}|_{\beta -[\beta ]}
\end{equation*}%
and there is a constant $C$ such that for all $s\leq t\leq T,$ 
\begin{equation*}
|\partial ^{k}u_{n}(t,\cdot )-u_{n}(s,\cdot )|_{\alpha /2+\beta }\leq
C(t-s)^{1/2}|\partial ^{k}f_{n}|_{\beta }
\end{equation*}%
for all $k\leq \lbrack \beta ]$. Let $[\beta ]<\beta ^{\prime }<\beta .$
Then, there is a constant $C$ depending only on $\alpha ,\ \beta ^{\prime
},T,d$, $\mu ,K$ such that 
\begin{equation}
|u_{n}|_{\alpha +\beta ^{\prime }-[\beta ]}\leqslant C|f_{n}|_{\beta
^{\prime }-[\beta ]}.  \label{du}
\end{equation}%
Moreover,%
\begin{equation}
|u_{n}|_{\beta -[\beta ]}\leq c(\alpha ,d)(\lambda ^{-1}\wedge
T)|f_{n}|_{\beta -[\beta ]}  \label{trys}
\end{equation}%
and there is a constant $C$ such that for all $s\leq t\leq T,$ 
\begin{equation}
|u_{n}(t,\cdot )-u_{n}(s,\cdot )|_{\alpha /2+\beta }\leq
C(t-s)^{1/2}|f_{n}|_{\beta }.  \label{ket}
\end{equation}%
By Lemma \ref{pagle1} and (\ref{nulis}), there exists $u\in C^{\alpha +\beta
^{\prime }}(H)$ such that $u_{n}\rightarrow u$ in $C^{\alpha +\beta ^{\prime
}}(H)$. Therefore $u$ satisfies (\ref{defs}) with $A_{\alpha }^{0}$ instead
of $\mathcal{L}^{(\alpha )}$. Since (\ref{vienas}) holds with $\beta
^{\prime }=\beta $, the solution $u\in C^{\alpha +\beta }(H)$ and the
statement is proved.\newline
\end{proof}

\subsubsection{Estimates of $\mathcal{B}^{(\protect\alpha )}f,f\in C^{%
\protect\alpha +\protect\beta }$}

We will use the following equality for the estimates of $\mathcal{B}%
^{(\alpha )}.$

\begin{lemma}
\label{r1}$($Lemma 2.1 in \textup{\cite{Kom84}}$)$ For $\delta \in (0,1)$
and $u\in C_{0}^{\infty }(\mathbf{R}^{d})$, 
\begin{equation}
u\big( x+y\big) -u(x)=K\int k^{(\delta )}(y,z)\partial ^{\delta }u(x-z)dz,
\label{eqn:diff_bound}
\end{equation}%
where $K=K(\delta ,d)$ is a constant, 
\begin{equation*}
k^{(\delta )}(y,z)=|z+y|^{-d+\delta }-|z|^{-d+\delta },
\end{equation*}%
and there exists a constant $C$ such that 
\begin{equation*}
\int |k^{(\delta )}(y,z)|dz\leq C|y|^{\delta },\forall y\in \mathbf{R}^{d}.
\end{equation*}
\end{lemma}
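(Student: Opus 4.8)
The plan is to derive (\ref{eqn:diff_bound}) from the Riesz potential inversion formula and then obtain the $L^1$ bound on the kernel by a scaling argument. Fix $u\in C_0^\infty(\mathbf{R}^d)\subset\mathcal{S}(\mathbf{R}^d)$ and let $\partial^\delta u$ be the fractional derivative of order $\delta$, so that $\mathcal{F}(\partial^\delta u)(\xi)=|\xi|^\delta\mathcal{F}u(\xi)$ (any sign dictated by the convention being absorbed into the constant $K$). I would first record the decay of $\partial^\delta u$: since $|\xi|^\delta\mathcal{F}u(\xi)$ is rapidly decreasing away from the origin and vanishes at the origin, its derivatives of order $N$ produce at $\xi=0$ a singularity no worse than $|\xi|^{\delta-N}$, which is locally integrable when $N<d+\delta$; hence $|\partial^\delta u(w)|\le C_N(1+|w|)^{-N}$ for every $N<d+\delta$. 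Because $\delta<1\le d$, one may take $N\in(\delta,d+\delta)$, and this, together with the local integrability of $|z|^{-d+\delta}$ (as $-d+\delta>-d$), makes all the integrals below absolutely convergent.

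Since the Riesz kernel $c_{d,\delta}|z|^{-d+\delta}$ has Fourier transform $|\xi|^{-\delta}$ (locally integrable for $\delta<d$) and $|\xi|^{-\delta}\cdot|\xi|^\delta\mathcal{F}u(\xi)=\mathcal{F}u(\xi)$, the inversion formula
\[
u(\zeta)=c_{d,\delta}\int_{\mathbf{R}^d}|z|^{-d+\delta}\,\partial^\delta u(\zeta-z)\,dz,\qquad \zeta\in\mathbf{R}^d,
\]
holds. Applying it at $\zeta=x+y$, substituting $z\mapsto z+y$ in the resulting integral, applying it again at $\zeta=x$, and subtracting, I obtain
\[
u(x+y)-u(x)=c_{d,\delta}\int_{\mathbf{R}^d}\bigl(|z+y|^{-d+\delta}-|z|^{-d+\delta}\bigr)\,\partial^\delta u(x-z)\,dz,
\]
which is (\ref{eqn:diff_bound}) with a constant $K=K(\delta,d)$ and $k^{(\delta)}(y,z)=|z+y|^{-d+\delta}-|z|^{-d+\delta}$.

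For the kernel estimate I would exploit dilation and rotation invariance. Writing $y=|y|e$ with $|e|=1$ and substituting $z=|y|w$, so that $k^{(\delta)}(y,|y|w)=|y|^{-d+\delta}\bigl(|w+e|^{-d+\delta}-|w|^{-d+\delta}\bigr)$ and $dz=|y|^d\,dw$, one gets
\[
\int_{\mathbf{R}^d}|k^{(\delta)}(y,z)|\,dz=|y|^{\delta}\int_{\mathbf{R}^d}\bigl||w+e|^{-d+\delta}-|w|^{-d+\delta}\bigr|\,dw.
\]
By rotation invariance of Lebesgue measure and of $|\cdot|$ the last integral does not depend on the unit vector $e$, so it equals a constant $C(d,\delta)$, and it remains to check $C(d,\delta)<\infty$. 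On $\{|w|\le 2\}$ both terms are integrable because the exponent $-d+\delta$ exceeds $-d$ (the singularities, at $w=0$ and $w=-e$, lie in this ball; away from them the terms are bounded); on $\{|w|>2\}$ the mean value theorem applied to $w\mapsto|w|^{-d+\delta}$ (smooth there, with $|w+te|\ge|w|/2$ for $t\in[0,1]$) gives $\bigl||w+e|^{-d+\delta}-|w|^{-d+\delta}\bigr|\le C|w|^{-d+\delta-1}$, and $\int_{|w|>2}|w|^{-d+\delta-1}\,dw<\infty$ precisely because $\delta<1$. This yields $\int|k^{(\delta)}(y,z)|\,dz\le C|y|^\delta$.

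The only genuinely delicate step is the Riesz inversion, and the decay estimate for $\partial^\delta u$ is exactly what makes it rigorous; everything afterward is the change of variables and the scaling computation, both routine. Alternatively one may quote the Riesz potential inversion directly (e.g. from Landkof or Stein) and bypass the Fourier bookkeeping.
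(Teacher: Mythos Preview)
Your argument is correct: the identity follows exactly from the Riesz potential representation $u=c_{d,\delta}\,|\cdot|^{-d+\delta}\ast\partial^\delta u$ after the substitution $z\mapsto z+y$, and the $L^1$ estimate on $k^{(\delta)}$ follows from the scaling $z=|y|w$ together with the mean-value bound at infinity (which needs precisely $\delta<1$). The decay bound $|\partial^\delta u(w)|\le C_N(1+|w|)^{-N}$ for $N<d+\delta$ is the right justification for the absolute convergence of the Riesz integrals.

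There is nothing to compare against in the paper itself: the authors do not prove this lemma but merely quote it as Lemma~2.1 of Komatsu~\cite{Kom84}. Your proof is essentially the standard one and is what one finds in Komatsu's paper; the only cosmetic difference is that you spell out the Fourier-side bookkeeping for the decay of $\partial^\delta u$, which Komatsu leaves implicit.
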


First we prove the following auxiliary estimate.

\begin{lemma}
\label{nle1} Let $\lambda \geq 1,\eta (d\upsilon )$ be a nonnegative measure
on $U$ and let $\mu \in \mathbf{N}_{0}^{d}$ be a mutiindex such that $n\geq
|\mu |,$ and $\sum_{j=1}^{k}\gamma _{j}=\mu $ with $\gamma _{j}\in \mathbf{N}%
_{0}^{d},\gamma _{j}\neq 0,k\geq \lambda $. Then there exist numbers $\theta
(\lambda ,j)\in \lbrack 0,1]$ and a constant $C$ such that for any
nonnegative measurable functions $l_{\lambda ,\gamma _{j}}$ on $U$,%
\begin{eqnarray*}
\int_{U}\prod_{j}|l_{\lambda ,\gamma _{j}}|d\eta &\leq &C\prod_{j}\Big(%
\int_{U_{1}}|l_{\lambda ,{\gamma_j}}|^{\frac{n}{|\gamma _{j}|}\vee \lambda
}d\eta \Big)^{( \frac{|\gamma _{j}|}{n}\wedge \lambda ) \theta (\lambda ,j)}%
\Big(\int_{U}|l_{\lambda ,{\gamma_j}}|^{\lambda }d\eta \Big)^{\frac{1}{%
\lambda }(1-\theta (\alpha ,j))} \\
&\leq &C\prod_{j}\Big\{ \Big(\int_{U_{1}}|l_{\lambda ,{\gamma_j}}|^{\frac{n}{%
|\gamma _{j}|}\vee \lambda }d\eta \Big)^{( \frac{|\gamma _{j}|}{n}\wedge
\lambda ) }+ \Big(\int_{U}|l_{\lambda ,{\gamma_j}}|^{\lambda }d\eta \Big)^{%
\frac{1}{\lambda }}\Big\} .
\end{eqnarray*}%
In addition, there is a constant $C$ such that%
\begin{equation*}
\int_{U}\prod_{j}|l_{\lambda ,\gamma _{j}}|d\eta \leq C\sum_{j}\int_{U}\big[%
|l_{\lambda ,{\gamma_j}}|^{\frac{n}{|\gamma _{j}|}\vee \lambda }+|l_{\lambda
,{\gamma_j}}|^{\lambda }\big]d\eta .
\end{equation*}
\end{lemma}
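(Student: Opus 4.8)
The whole lemma reduces to the generalized H\"{o}lder inequality combined with the interpolation inequality for $L^{p}$-norms. Write $r_{j}:=\frac{n}{|\gamma _{j}|}\vee \lambda $, so that $r_{j}\geq \lambda \geq 1$ and $r_{j}^{-1}=\frac{|\gamma _{j}|}{n}\wedge \frac{1}{\lambda }\leq \frac{|\gamma _{j}|}{n}$. Since $\sum _{j}\gamma _{j}=\mu $ with all $\gamma _{j}\in \mathbf{N}_{0}^{d}$ one has $\sum _{j}|\gamma _{j}|=|\mu |$, hence
\begin{equation*}
\sum_{j=1}^{k}\frac{1}{r_{j}}\leq \sum_{j=1}^{k}\frac{|\gamma _{j}|}{n}=\frac{|\mu |}{n}\leq 1\leq \frac{k}{\lambda }=\sum_{j=1}^{k}\frac{1}{\lambda };
\end{equation*}
thus the exponent tuple $(r_{1},\ldots ,r_{k})$ is ``sub-conjugate'' while $(\lambda ,\ldots ,\lambda )$ is ``super-conjugate'', and the plan is to interpolate between them. (In the statement the $\theta (\alpha ,j)$ is a misprint for $\theta (\lambda ,j)$.)

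First I would produce exponents $q_{j}\in [\lambda ,r_{j}]$ with $\sum _{j}q_{j}^{-1}=1$. Let $J=\{j:\tfrac{n}{|\gamma _{j}|}\geq \lambda \}$; then $J\neq \emptyset $, for if not then $|\gamma _{j}|>n/\lambda $ for every $j$ and $|\mu |=\sum _{j}|\gamma _{j}|>kn/\lambda \geq n$, contradicting $|\mu |\leq n$. Set $\theta (\lambda ,j)=0$ for $j\notin J$ and $\theta (\lambda ,j)=s$ for $j\in J$, where $s\in [0,1]$ is chosen so that the affine function $s\mapsto \sum _{j}\big( \tfrac{\theta (\lambda ,j)}{r_{j}}+\tfrac{1-\theta (\lambda ,j)}{\lambda }\big) $ takes the value $1$; this is possible by the intermediate value theorem, since the function equals $k/\lambda \geq 1$ at $s=0$ and equals $\tfrac{|J^{c}|}{\lambda }+\sum _{j\in J}\tfrac{|\gamma _{j}|}{n}\leq \tfrac{|\mu |}{n}\leq 1$ at $s=1$ (using $|\gamma _{j}|>n/\lambda $ for $j\in J^{c}$). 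Defining $q_{j}$ by $q_{j}^{-1}=\tfrac{\theta (\lambda ,j)}{r_{j}}+\tfrac{1-\theta (\lambda ,j)}{\lambda }$ gives $q_{j}^{-1}\in [r_{j}^{-1},\lambda ^{-1}]$, i.e.\ $\lambda \leq q_{j}\leq r_{j}$, with $\sum _{j}q_{j}^{-1}=1$; moreover for $j\in J$ one has $r_{j}=\tfrac{n}{|\gamma _{j}|}$, so $\tfrac{\theta (\lambda ,j)}{r_{j}}=(\tfrac{|\gamma _{j}|}{n}\wedge \lambda )\theta (\lambda ,j)$, and for $j\notin J$ the exponent $\theta (\lambda ,j)$ vanishes.

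With these $q_{j}$ the first displayed inequality follows from H\"{o}lder and $L^{p}$-interpolation: for each $j$,
\begin{equation*}
\Big( \int _{U_{1}}|l_{\lambda ,\gamma _{j}}|^{q_{j}}d\eta \Big) ^{1/q_{j}}\leq \Big( \int _{U_{1}}|l_{\lambda ,\gamma _{j}}|^{r_{j}}d\eta \Big) ^{\theta (\lambda ,j)/r_{j}}\Big( \int _{U}|l_{\lambda ,\gamma _{j}}|^{\lambda }d\eta \Big) ^{(1-\theta (\lambda ,j))/\lambda }
\end{equation*}
(interpolation between $L^{\lambda }$ and $L^{r_{j}}$, with the last domain enlarged from $U_{1}$ to $U$), and multiplying over $j$ together with generalized H\"{o}lder, $\int _{U_{1}}\prod _{j}|l_{\lambda ,\gamma _{j}}|d\eta \leq \prod _{j}\|l_{\lambda ,\gamma _{j}}\|_{L^{q_{j}}(U_{1},\eta )}$, gives the estimate with $\int _{U_{1}}$ on the left; one then passes to $\int _{U}=\int _{U_{1}}+\int _{U_{1}^{c}}$, bounding the $U_{1}^{c}$-part by the same right-hand side using the finiteness of $\eta $ on $U_{1}^{c}$ (which holds for the L\'{e}vy-type measures to which the lemma is applied) and H\"{o}lder. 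The second displayed inequality is then immediate from $a^{\theta }b^{1-\theta }\leq \theta a+(1-\theta )b\leq a+b$ for $a,b\geq 0$, $\theta \in [0,1]$, applied with $a=(\int _{U_{1}}|l_{\lambda ,\gamma _{j}}|^{r_{j}}d\eta )^{\frac{|\gamma _{j}|}{n}\wedge \lambda }$, $b=(\int _{U}|l_{\lambda ,\gamma _{j}}|^{\lambda }d\eta )^{1/\lambda }$, $\theta =\theta (\lambda ,j)$. For the final (``In addition'') inequality I would apply generalized H\"{o}lder over $U$, $\int _{U}\prod _{j}|l_{\lambda ,\gamma _{j}}|d\eta \leq \prod _{j}a_{j}^{1/q_{j}}$ with $a_{j}=\int _{U}|l_{\lambda ,\gamma _{j}}|^{q_{j}}d\eta $; since $\sum _{j}q_{j}^{-1}=1$ and $q_{j}^{-1}\leq 1$, the weighted arithmetic--geometric mean inequality gives $\prod _{j}a_{j}^{1/q_{j}}\leq \sum _{j}q_{j}^{-1}a_{j}\leq \sum _{j}a_{j}$, and since $\lambda \leq q_{j}\leq r_{j}$ one has $t^{q_{j}}\leq t^{\lambda }\vee t^{r_{j}}\leq t^{\lambda }+t^{r_{j}}$ for all $t\geq 0$, whence $a_{j}\leq \int _{U}(|l_{\lambda ,\gamma _{j}}|^{\lambda }+|l_{\lambda ,\gamma _{j}}|^{r_{j}})d\eta $; combining these gives the inequality with constant $1$.

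The step I expect to be the main obstacle is the construction of the interpolation parameters --- verifying that the tuple $(q_{j})$ can be chosen coordinatewise in $[\lambda ,r_{j}]$ while keeping $\sum _{j}q_{j}^{-1}=1$, which is precisely where the hypotheses $|\mu |\leq n$, $\gamma _{j}\neq 0$ and $k\geq \lambda $ are consumed --- together with the routine but slightly delicate bookkeeping needed to pass from $U_{1}$ to $U$ in the first two displays and to identify $\theta (\lambda ,j)/r_{j}$ with $(\tfrac{|\gamma _{j}|}{n}\wedge \lambda )\theta (\lambda ,j)$ in the exponents.
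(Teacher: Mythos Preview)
Your approach is correct and rests on the same two ingredients as the paper's proof: the generalized H\"older inequality together with $L^{p}$--interpolation between the exponents $\lambda$ and $r_{j}=\tfrac{n}{|\gamma_{j}|}\vee\lambda$. The difference is only in how the conjugate tuple $(q_{j})$ with $\sum_{j}q_{j}^{-1}=1$ and $\lambda\le q_{j}\le r_{j}$ is produced. The paper splits into two cases according to whether some index $j_{0}$ satisfies $|\mu|/|\gamma_{j_{0}}|<\lambda$: if so it first peels off that factor with exponent $\lambda$ and applies H\"older to the remaining product with exponents $\tfrac{\lambda}{\lambda-1}\tfrac{|\mu|-|\gamma_{j_{0}}|}{|\gamma_{j}|}$; if not it takes $q_{j}=|\mu|/|\gamma_{j}|$ directly. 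Your intermediate--value construction handles both cases at once and is somewhat cleaner, while the paper's explicit choices make the resulting $\theta(\lambda,j)$ more concrete; either way the estimates coincide.

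One small point: do not try to ``pass from $U_{1}$ to $U$'' in the first two displays. The appearance of $U_{1}$ there (and of $\wedge\,\lambda$ in the exponent, and of $\theta(\alpha,j)$) are typos in the statement; the paper's own proof works entirely over $U$ and writes the exponent as $(\tfrac{|\gamma_{j}|}{n}\wedge\tfrac{1}{\lambda})\theta(\lambda,j)$. Your argument already proves the inequality with $U$ throughout, which is what is actually used later, so the extra bookkeeping you flag as ``delicate'' is unnecessary --- and in fact your sketch of it (bounding the $U_{1}^{c}$--contribution via finiteness of $\eta(U_{1}^{c})$) would not go through as written, since for $q_{j}\ge\lambda$ on a finite measure space the inequality between $L^{q_{j}}$ and $L^{\lambda}$ runs the wrong way.
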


\begin{proof}
If there is $\gamma _{j_{0}}$ for which $\frac{|\mu |}{|\gamma _{j_{0}}|}%
<\lambda $ or $|\gamma _{j_{0}}|>\frac{|\mu |}{\lambda }$ ($\lambda >1\,$\
in this case and there could be only one $\gamma _{j_{0}}$ like this), then $%
|\mu |-|\gamma _{j_{0}}|<|\mu |(1-\frac{1}{\lambda })$ and for $\gamma
_{j}\neq \gamma _{j_{0}}$%
\begin{equation*}
\lambda \leq \frac{\lambda }{\lambda -1}\frac{|\mu |-|\gamma _{j_{0}}|}{%
|\gamma _{j}|}\leq \frac{|\mu |}{|\gamma _{j}|}\leq \frac{n}{|\gamma _{j}|}.
\end{equation*}%
By H\"{o}lder's inequality,%
\begin{eqnarray*}
\int_{U}\prod_{j}|l_{\lambda ,\gamma _{j}}|d\eta &\leq &\Big(%
\int_{U}|l_{\lambda ,\gamma _{j_{0}}}|^{\lambda }d\eta \Big)^{\frac{1}{%
\lambda }} \Big(\int_{U}\prod_{j\neq j_{0}}|l_{\lambda ,\gamma _{j}}|^{\frac{%
\lambda }{\lambda -1}}d\eta \Big)^{1-\frac{1}{\lambda }} \\
&\leq &C \Big[\int_{U}|l_{\lambda ,\gamma _{j_{0}}}|^{\lambda }d\eta
+\int_{U}\prod_{j\neq j_{0}}|l_{\lambda ,\gamma _{j}}|^{\frac{\lambda }{%
\lambda -1}}d\eta \Big],
\end{eqnarray*}%
where $\sum_{j\neq j_{0}}\gamma _{j}=\mu -\gamma _{j_{0}}$ and $\sum_{j\neq
j_{0}}\frac{|\gamma _{j}|}{|\mu |-|\gamma _{j_{0}}|}=1$. Hence, by H\"{o}%
lder's inequality,%
\begin{eqnarray*}
\int_{U}\prod_{j\neq j_{0}}|l_{\lambda ,\gamma _{j}}|^{\frac{\lambda }{%
\lambda -1}}d\eta &\leq &\prod_{j\neq j_{0}}\Big(\int_{U}|l_{\lambda ,\gamma
_{j}}|^{\frac{\lambda }{\lambda -1}\frac{|\mu |-|\gamma _{j_{0}}|}{|\gamma
_{j}|}}d\eta \Big)^{\frac{|\gamma _{j}|}{|\mu |-|\gamma _{j_{0}}|}} \\
&\leq &C\sum_{j\neq j_0}\int_{U}|l_{\lambda ,\gamma _{j}}|^{\frac{\lambda }{%
\lambda -1}\frac{|\mu |-|\gamma _{j_{0}}|}{|\gamma _{j}|}}d\eta \\
&\leq &C\sum_{j\neq j_0}\int_{U}\big(|l_{\lambda ,\gamma _{j}}|^{\lambda
}+|l_{\lambda ,\gamma _{j}}|^{\frac{n}{|\gamma _{j}|}}\big)d\eta ,
\end{eqnarray*}%
and by the interpolation inequality there are $\theta (\lambda ,j)\in
\lbrack 0,1]$ such that%
\begin{eqnarray*}
\int_{U}\prod_{j}|l_{\lambda ,\gamma _{j}}|d\eta &\leq &\Big(%
\int_{U}|l_{\lambda ,\gamma _{j_{0}}}|^{\lambda }d\pi \Big)^{\frac{1}{%
\lambda }}\prod_{j\neq j_{0}}\Big(\int_{U}|l_{\lambda ,\gamma _{j}}|^{\frac{%
\lambda }{\lambda -1}\frac{|\mu |-|\gamma _{j_{0}}|}{|\gamma _{j}|}}d\pi %
\Big)^{\frac{\lambda -1}{\lambda }\frac{|\gamma _{j}|}{|\mu |-|\gamma
_{j_{0}}|}} \\
&\leq &\prod_{j}\Big(\int_{U}|l_{\lambda ,\gamma _{j}}|^{\lambda }d\pi \Big)%
^{\frac{1}{\lambda }(1-\theta (\lambda ,j))} \Big(\int_{U}|l_{\lambda
,\gamma _{j}}|^{\frac{n}{|\gamma _{j}|}\vee \lambda }d\pi \Big)^{(\frac{%
|\gamma _{j}|}{n}\wedge \frac{1}{\lambda })\theta (\lambda ,j)}.
\end{eqnarray*}

If for all $j,$ $\frac{|\mu |}{|\gamma _{j}|}\geq \lambda $, then $\sum_{j}%
\frac{|\gamma _{j}|}{|\mu |}=1$ and by H\"{o}lder's inequality, 
\begin{eqnarray*}
\int_{U}\prod_{j}|l_{\lambda ,\gamma _{j}}|d\eta &\leq &\prod_{j}\Big(%
\int_{U}|l_{\lambda ,\gamma _{j}}|^{\frac{|\mu |}{|\gamma _{j}|}}d\eta \Big)%
^{\frac{|\gamma _{j}|}{|\mu |}} \\
&\leq &C\sum_{j}\int_{U}|l_{\lambda ,\gamma _{j}}|^{\frac{|\mu |}{|\gamma
_{j}|}}d\eta \\
& \leq & C\sum_{j}\int_{U_{1}}\big[|l_{\lambda ,\gamma _{j}}|^{\frac{n}{%
|\gamma _{j}|}}+|l_{\lambda ,\gamma _{j}}|^{\lambda }\big]d\eta .
\end{eqnarray*}%
Also, by interpolation inequalities, 
\begin{eqnarray*}
\prod_{j}\Big(\int_{U}|l_{\lambda ,\gamma _{j}}|^{\frac{|\mu |}{|\gamma _{j}|%
}}d\eta \Big)^{\frac{|\gamma _{j}|}{|\mu |}} \leq \prod_{j}\Big(%
\int_{U}|l_{\lambda ,\gamma _{j}}|^{\lambda }d\eta \Big)^{\frac{1}{\lambda }%
(1-\theta (\lambda ,j)}\Big(\int_{U_{1}}|l_{\lambda ,\gamma _{j}}|^{\frac{n}{%
|\gamma _{j}|}}d\eta \Big)^{\frac{|\gamma _{j}|}{n}\theta (\lambda ,j)}.
\end{eqnarray*}%
The statement follows.
\end{proof}

\begin{proposition}
\label{b1}Let $\beta >0,\beta \notin \mathbf{N}$. Assume \textup{A1-A4($%
\beta )$} hold. Then for each $\varepsilon >0$ there exists a constant $%
C_{\varepsilon }$ such that%
\begin{equation*}
|\mathcal{B}^{(\alpha )}f|_{\beta }\leq \varepsilon |f|_{\alpha +\beta
}+C_{\varepsilon }|f|_{\beta -[\beta ]},f\in C^{\alpha +\beta }(\mathbf{R}%
^{d}).
\end{equation*}
\end{proposition}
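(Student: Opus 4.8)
The plan is to estimate $\mathcal{B}^{(\alpha)}f = \mathbf{1}_{\{\alpha\in(1,2]\}}(a(x),\nabla_x f(x)) + \int_U [f(x+l_\alpha(x,\upsilon)) - f(x) - \mathbf{1}_{\{\alpha\in(1,2]\}}\mathbf{1}_{U_1}(\upsilon)(\nabla_x f(x),l_\alpha(x,\upsilon))]\pi(d\upsilon)$ in the Hölder-Zygmund norm $|\cdot|_\beta$ by splitting the integral over $U_1$ and $U_1^c$, handling the drift term trivially via A2$(\beta)$, and reducing the problem to bounding each $j$-th order spatial derivative together with its $\{\beta\}$-Hölder (or Zygmund) seminorm. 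Throughout I will use the Komatsu-type representation of Lemma~\ref{r1}, namely $f(x+l) - f(x) = K\int k^{(\delta)}(l,z)\partial^\delta f(x-z)\,dz$ with $\int |k^{(\delta)}(l,z)|\,dz \le C|l|^\delta$, which converts a difference of $f$ into an integral against a fractional derivative and thereby trades a factor $|l_\alpha(x,\upsilon)|^\delta$ for a gain of $\delta$ derivatives; the key is to choose $\delta = (\alpha + \beta^{\prime} - [\beta])\wedge 1$ matching assumption A3$(\beta)$, so that the resulting power of $|l_\alpha|$ is $\pi$-integrable by A2$(\beta)$ while the surviving norm of $f$ is of order $\le \alpha+\beta$ minus a small amount, giving the $C_\varepsilon|f|_{\beta-[\beta]}$ term after an interpolation inequality.

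First I would treat the case $\beta\in(0,1)$, so $[\beta]=0$ and we need only the sup norm and the $\beta$-Hölder seminorm of $\mathcal{B}^{(\alpha)}f$. On $U_1$: for $\alpha\le 1$ write $f(x+l) - f(x)$ directly via Lemma~\ref{r1} with $\delta=\alpha$ (or $\delta$ slightly below, to get the strict inequality in $\beta^{\prime}<\beta$), bound $|\partial^\delta f|$ by $|f|_{\alpha+\beta}$ up to interpolation, and integrate $\int_{U_1}|l_\alpha(x,\upsilon)|^\alpha\pi(d\upsilon)\le K$; for $\alpha\in(1,2]$ the compensator term is handled by writing the twice-differenced quantity $f(x+l)-f(x)-(\nabla f,l)$ and applying Lemma~\ref{r1} to the increment of $\nabla f$, which costs one derivative already and leaves room for $\delta$ more. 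On $U_1^c$: use $|f(x+l)-f(x)| \le 2|f|_0 \wedge C|f|_0^{1-\theta}|f|_{\alpha+\beta}^\theta |l|^{(\alpha\wedge1)}$ so that $|l_\alpha|^{\alpha\wedge1}\wedge 1$ is integrable by A2$(\beta)$; the Hölder seminorm in $x$ comes from A3$(\beta)$'s bound on $\int_{U_1^c}|l_\alpha(x,\upsilon)-l_\alpha(x^{\prime},\upsilon)|^{(\alpha+\beta^{\prime}-[\beta])\wedge1}\wedge1\,\pi(d\upsilon)\le C|x-x^{\prime}|^{\beta-[\beta]}$ combined with the chain-rule/mean-value estimate for $f(x+l_\alpha(x,\cdot))$ as a function of $x$. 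In every place where a full $|f|_{\alpha+\beta}$ would appear with a constant that cannot be made small, I would invoke the interpolation inequality $|f|_{\alpha+\beta-\epsilon_0} \le \varepsilon |f|_{\alpha+\beta} + C_\varepsilon |f|_{\beta-[\beta]}$ valid in Hölder-Zygmund scales, which is precisely what produces the stated form of the bound.

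For $\beta\ge 1$ (i.e. $[\beta]\ge 1$), I would differentiate $\mathcal{B}^{(\alpha)}f$ up to order $[\beta]$ in $x$, noting that $\partial_x^\gamma [f(x+l_\alpha(x,\upsilon))]$ expands by the Faà di Bruno / chain rule into a sum of terms $\partial^{|\nu|}f(x+l_\alpha)\prod_{j}\partial^{\gamma_j}l_\alpha(x,\upsilon)$ with $\sum_j\gamma_j$ a sub-multiindex of $\gamma$ and $|\nu|\le[\beta]$. This is where Lemma~\ref{nle1} enters: it lets me bound $\int_U\prod_j|\partial^{\gamma_j}l_\alpha|\,\pi(d\upsilon)$ by sums of $\int_{U_1}(|\partial^{\gamma_j}l_\alpha|^{[\beta]/|\gamma_j|\vee\alpha} + |\partial^{\gamma_j}l_\alpha|^\alpha)\pi(d\upsilon)$, which is exactly the combination appearing in A2$(\beta)$. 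The Hölder seminorm of $\partial^{[\beta]}\mathcal{B}^{(\alpha)}f$ requires estimating the $x$-increment of each such product term: the increment falls either on the $\partial^{|\nu|}f$ factor — here one again uses Lemma~\ref{r1} applied to $\partial^{[\beta]}f$ with the fractional gain $\delta=(\alpha+\beta^{\prime}-[\beta])\wedge1$ and the bound on $\int|l_\alpha(x,\cdot)-l_\alpha(x^{\prime},\cdot)|^{\cdot}$ from A3$(\beta)$ — or on one of the $\partial^{\gamma_j}l_\alpha$ factors, handled by A4$(\beta)$, which provides exactly the mixed $L^{[\beta]/j\vee\alpha}(\pi)$ increment bounds $\le C|x-x^{\prime}|^{\beta-[\beta]}$ needed after Lemma~\ref{nle1}. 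The main obstacle is bookkeeping: tracking, for each multiindex configuration $(\nu,\gamma_1,\dots,\gamma_k)$, which integrability exponent is forced on each $\partial^{\gamma_j}l_\alpha$ and checking that the surviving smoothness demanded of $f$ never exceeds $\alpha+\beta$ while the leftover power of $|x-x^{\prime}|$ is at least $\beta-[\beta]$ — this is the combinatorial heart of the proof and the reason the hypotheses A2--A4$(\beta)$ have their precise (and rather baroque) form; everything else is the now-standard Komatsu-formula-plus-interpolation machinery.
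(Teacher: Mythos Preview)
Your structural plan --- split into $U_1$ vs.\ $U_1^c$, differentiate via Fa\`a di Bruno, apply Lemma~\ref{r1} for fractional gains, use Lemma~\ref{nle1} for the product-of-derivatives integrals, and finish with interpolation --- is exactly the paper's scheme. But there is one genuine gap in your mechanism for producing the small $\varepsilon$ coefficient.

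Consider the top-order H\"older difference, i.e.\ the analogue of the paper's $S_{11}$ (and $S_{21}$): with $|\gamma|=[\beta]$ one must bound
\[
\int_{U_1}\bigl|[\partial^{\gamma}f(x+l_\alpha(x,\upsilon))-\partial^{\gamma}f(x)]-[\partial^{\gamma}f(x'+l_\alpha(x,\upsilon))-\partial^{\gamma}f(x')]\bigr|\,\pi(d\upsilon)
\]
by $C|x-x'|^{\beta-[\beta]}$. Applying Lemma~\ref{r1} with exponent $\delta$ converts the inner difference into $\int|k^{(\delta)}(l_\alpha,z)|\,|\,|\partial|^\delta\partial^{\gamma}f(x-z)-|\partial|^\delta\partial^{\gamma}f(x'-z)|\,dz$, and the $(\beta-[\beta])$-H\"older seminorm of $|\partial|^\delta\partial^{[\beta]}f$ is controlled by $|f|_{\delta+\beta}$. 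To keep $\int_{U_1}|l_\alpha(x,\upsilon)|^{\delta}\pi(d\upsilon)$ finite you are forced to take $\delta=\alpha$ (A2$(\beta)$ gives only the $\alpha$-th moment on $U_1$, and $\pi(U_1)$ may be infinite, so $\delta<\alpha$ is not available). But $\delta=\alpha$ yields exactly $|f|_{\alpha+\beta}$ with coefficient $\int_{U_1}|l_\alpha|^{\alpha}\pi\le K$, which is bounded but \emph{not} small; interpolation cannot reduce $|f|_{\alpha+\beta}$ any further.

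The paper resolves this by invoking assumption A1(ii): one splits $U_1=U_n\cup(U_1\setminus U_n)$, applies Lemma~\ref{r1} with $\delta=\alpha$ only on $U_n$ (giving $|f|_{\alpha+\beta}$ multiplied by $\sup_x\int_{U_n}|l_\alpha|^{\alpha}\pi$, which tends to $0$ as $n\to\infty$), and on $U_1\setminus U_n\subset U_n^c$ uses $\pi(U_n^c)<\infty$ together with the crude bound $|\partial^{\gamma}f(x+l)-\partial^{\gamma}f(x)-\cdots|\le C|f|_{\alpha+\beta'}$ for some $\beta'<\beta$. Choosing $n$ large makes the $|f|_{\alpha+\beta}$-coefficient as small as desired; the $|f|_{\alpha+\beta'}$ term is then handled by interpolation as you describe. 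You should add this $U_n$--splitting step; it is the one place where A1(ii) is actually used, and without it the argument does not close.
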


\begin{proof}
For $\gamma \in \mathbf{N}_{0}^{d},|\gamma |\leq \lbrack \beta ],x\in 
\mathbf{R}^{d},$%
\begin{eqnarray*}
\partial ^{\gamma }[\mathcal{B}_{x}^{(\alpha )}f(x)] &=&\sum_{\nu +\mu
=\gamma }\partial _{z}^{\mu }\mathcal{B}_{z}^{(\alpha )}\partial ^{\nu
}f(x)|_{z=x} \\
&=&\mathcal{B}_{x}^{(\alpha )}\partial ^{\gamma }f(x)+\sum_{\nu +\mu =\gamma
,\mu \neq 0}\partial _{z}^{\mu }\mathcal{B}_{z}^{(\alpha )}\partial ^{\nu
}f(x)|_{z=x}.
\end{eqnarray*}%
For $\mu \neq 0$, 
\begin{eqnarray}
&&\partial _{z}^{\mu }\mathcal{B}_{z}^{(\alpha )}\partial ^{\nu }f(x)|_{z=x}
\notag  \label{nf1} \\
&=&\mathbf{1}_{\{\alpha >1\}}\int_{U_{1}}\partial _{z}^{\mu }\big[\partial
^{\nu }f(x+l_{\alpha }(z,\upsilon ))-\partial ^{\nu }f(x)-\big(\nabla
f(x),l_{\alpha }(z,\upsilon )\big)\big]|_{z=x}\pi (d\upsilon )  \notag \\
&&+\int \theta _{\alpha }(\upsilon )\partial _{z}^{\mu }\partial ^{\nu
}f(x+l_{\alpha }(z,\upsilon ))|_{z=x}\pi (d\upsilon )=T_{1}(x)+T_{2}(x),
\end{eqnarray}%
with $\theta _{\alpha }(\upsilon )=\mathbf{1}_{\{\alpha \leq 1\}}+\mathbf{1}%
_{\{\alpha >1\}}\mathbf{1}_{U_{1}^{c}}(\upsilon )$, and%
\begin{eqnarray*}
&&\mathcal{B}_{x}^{(\alpha )}\partial ^{\gamma }f(x) \\
&=&\int \theta _{\alpha }(\upsilon )\big[\partial ^{\gamma }f(x+l_{\alpha
}(x,\upsilon ))-\partial ^{\gamma }f(x)\big]d\pi \\
&&+\mathbf{1}_{\{\alpha >1\}}\int_{U_{1}}\big[\partial ^{\nu }f(x+l_{\alpha
}(x,\upsilon ))-\partial ^{\nu }f(x)-(\nabla f(x),l_{\alpha }(x,\upsilon ))%
\big]\pi (d\upsilon ) \\
&=&S_{1}(x)+S_{2}(x).
\end{eqnarray*}

\emph{Estimates of }$S_{1}$. For any $\beta ^{\prime }\in ([\beta ],\beta )$
there is a constant $C$ such that%
\begin{eqnarray*}
\Big|\int \theta _{\alpha }(\upsilon )[\partial ^{\gamma }f(x+l_{\alpha
}(x,\upsilon ))-\partial ^{\gamma }f(x)]d\pi \Big| \leq C|f|_{\beta ^{\prime
}}\int \theta _{\alpha }(\upsilon )|l_{\alpha }(x,\upsilon )|^{\alpha \wedge
1}\wedge 1d\pi .
\end{eqnarray*}

For $x,x^{\prime }\in \mathbf{R}^{d},$%
\begin{equation*}
|S_{1}(x)-S_{1}(x^{\prime })|\leq S_{11}+S_{12},
\end{equation*}%
where%
\begin{eqnarray*}
S_{11} &=&\int \theta _{\alpha }(\upsilon )\big|\lbrack \partial ^{\gamma
}f(x+l_{\alpha }(x,\upsilon ))-\partial ^{\gamma }f(x)] \\
&&-[\partial ^{\gamma }f(x^{\prime }+l_{\alpha }(x,\upsilon ))-\partial
^{\gamma }f(x^{\prime })]\big|d\pi, \\
S_{12} &=&\int \theta _{\alpha }(\upsilon )\big|\partial ^{\gamma
}f(x^{\prime }+l_{\alpha }(x,\upsilon ))-\partial ^{\gamma }f(x^{\prime
}+l_{\alpha }(x^{\prime },\upsilon ))\big|d\pi .
\end{eqnarray*}%
For $\beta ^{\prime }<\beta $, by assumption \textup{A3}($\beta )$,%
\begin{eqnarray*}
S_{12} &\leq &C|f|_{\beta ^{\prime }}\int \theta _{\alpha }(\upsilon
)|\Delta l_{\alpha }(x,x^{\prime },\upsilon )|^{(\alpha +\beta ^{\prime
}-[\beta ])\wedge 1}\wedge 1d\pi \\
&\leq &C|x-x^{\prime }|^{\beta -[\beta ]},
\end{eqnarray*}%
with $\Delta l_{\alpha }(x,x^{\prime },\upsilon )=l_{\alpha }(x,\upsilon
)-l_{\alpha }(x^{\prime },\upsilon )$. For each $n$, by Lemma \ref{r1},%
\begin{eqnarray*}
S_{11} &=&\int_{U_{n}}...+\int_{U_{n}^{c}}... \\
&\le & \mathbf{1}_{\{\alpha <1\}}\int_{U_{n}}\big||\partial |^{\alpha
}\partial ^{\gamma }f(x-z)-|\partial |^{\alpha }\partial ^{\gamma }f(x-z)%
\big| k^{(\alpha )}(l_{\alpha }(x,\upsilon ),z)dzd\pi \\
&&+|f|^{\beta }|x-x^{\prime }|^{\beta -[\beta ]} \\
&\leq &C[|f|_{\alpha +\beta }\mathbf{1}_{\{\alpha
<1\}}\sup_{x}\int_{U_{n}}l_{\alpha }(x,\upsilon )^{\alpha }d\pi +|f|^{\beta
}]|x-x^{\prime }|^{\beta -[\beta ]}
\end{eqnarray*}
\end{proof}

\begin{proof}
\emph{Estimates of }$S_{2}$. Let $\alpha >1$. For $g\in C^{\alpha +\beta
-[\beta ]}$, denote 
\begin{eqnarray*}
\mathcal{T}_{h}g(x) &=&g(x+h)-g(x)-(\nabla g(x),h), \\
D_{h}\nabla g(x) &=&\nabla g(x+h)-\nabla g(x),x,h\in \mathbf{R}^{d}.
\end{eqnarray*}%
By lemma \ref{r1},%
\begin{eqnarray}
\mathcal{T}_{h}g(x) &=&\int_{0}^{1}\big(\nabla g(x+sh\big)-\nabla g(x),h\big)%
ds  \label{nf5} \\
&=&\int_{0}^{1}\int \big(|\partial |^{\alpha -1}\nabla g(x-z)k^{(\alpha
-1)}(sh,z),h\big)dzds,x,h\in \mathbf{R}^{d}.  \notag
\end{eqnarray}%
For $\alpha >1,$%
\begin{eqnarray*}
S_{2} &=&\int_{U_{1}}\mathcal{T}_{l_{\alpha }(x,v)}\partial ^{\nu }f(x)\pi
(d\upsilon ) \\
&=&\int_{U_{1}}\int_{0}^{1}\big(D_{sl_{\alpha }(x,\upsilon )}\nabla \partial
^{\nu }f(x),l_{\alpha }(x,\upsilon )\big)ds\pi (d\upsilon )
\end{eqnarray*}%
and for any $\beta ^{\prime }\in ([\beta ],\beta )$, 
\begin{equation*}
|S_{2}(x)|\leq C|f|_{a+\beta ^{\prime }}\int |l_{\alpha }(x,\upsilon
)|^{\alpha }d\pi \leq C|f|_{a+\beta ^{\prime }}.
\end{equation*}%
For $x,x^{\prime }\in \mathbf{R}^{d},\alpha >1,$%
\begin{eqnarray*}
S_{2}(x)-S_{2}(x^{\prime }) &=&\int_{U_{1}}[\mathcal{T}_{l_{\alpha
}(x,v)}\partial ^{\nu }f(x)-\mathcal{T}_{l_{\alpha }(x,v)}\partial ^{\nu
}f(x^{\prime })]d\pi \\
&&+\int_{U_{1}}[\mathcal{T}_{l_{\alpha }(x,v)}\partial ^{\nu }f(x^{\prime })-%
\mathcal{T}_{l_{\alpha }(x^{\prime },v)}\partial ^{\nu }f(x^{\prime })]d\pi
\\
&=&S_{21}+S_{22}.
\end{eqnarray*}%
Since for any $\beta ^{\prime }\in ([\beta ],\beta ),$ 
\begin{eqnarray*}
&&|T_{l_{\alpha }(x,v)}\partial ^{\nu }f(x^{\prime })-T_{l_{\alpha
}(x^{\prime },v)}\partial ^{\nu }f(x^{\prime })| \\
&\leq &C|f|_{\alpha +\beta ^{\prime }}\big(|l_{\alpha }(x^{\prime },\upsilon
)|^{\alpha -1}+|l_{\alpha }(x,\upsilon )|^{\alpha -1}\big)|\Delta l_{\alpha
}(x,x^{\prime },\upsilon )|,
\end{eqnarray*}%
then by H\"{o}lder's inequality, 
\begin{eqnarray*}
|S_{22}| &\leq &C|f|_{\alpha +\beta ^{\prime }}\Big(\int_{U_{1}}|\Delta
l_{\alpha }(x,x^{\prime },\upsilon )|^{\alpha }d\pi \Big)^{1/\alpha } \\
&\leq &C|f|_{\alpha +\beta ^{\prime }}|\beta -\beta ^{\prime }|.
\end{eqnarray*}%
By Lemma \ref{r1} and (\ref{nf5}), for each $n$ and $\beta ^{\prime }\in
([\beta ],\beta )$, there is a constant $C^{\prime }$ such that%
\begin{eqnarray*}
|S_{21}| &\leq &\int_{U_{n}}\int_{0}^{1}\int |\partial ^{\alpha -1}\nabla
\partial ^{\nu }f(x-z)-\partial ^{\alpha -1}\nabla \partial ^{\nu
}f(x^{\prime }-z)| \\
&&\times |k^{(\alpha -1)}(sl_{\alpha }(x,\upsilon ),z)|~|l_{\alpha
}(x,\upsilon )|dzds\pi (d\upsilon ) \\
&&+\int_{U_{1}\backslash U_{n}}|\mathcal{T}_{l_{\alpha }(x,v)}\partial ^{\nu
}f(x)-\mathcal{T}_{l_{\alpha }(x,v)}\partial ^{\nu }f(x^{\prime })|d\pi \\
&\leq &C|f|_{\alpha +\beta }|x-x^{\prime }|^{\beta -[\beta
]}\int_{U_{n}}|l_{\alpha }(x,\upsilon )|^{\alpha }d\pi \\
&&+C^{\prime }|f|_{\alpha +\beta ^{\prime }}|x-x^{\prime }|^{\beta -[\beta
]}\int_{U_{1}\backslash U_{n}}(1+|l_{\alpha }(x,\upsilon )|)d\pi .
\end{eqnarray*}

\emph{Estimates of }$T_{1}$. If $\alpha >1,$ then $%
T_{1}(x)=A_{1}(x)+A_{2}(x),$ where 
\begin{equation*}
A_{1}(x)=\int_{U_{1}}\big(\nabla \partial ^{\nu }f(x+l_{\alpha }(z,\upsilon
))-\nabla \partial ^{\nu }f(x),\partial _{z}^{\mu }l_{\alpha }(z,\upsilon )%
\big)|_{z=x}d\pi ,x\in \mathbf{R}^{d},
\end{equation*}%
and $A_{2}(x)$ consists of the sum whose terms are of the form 
\begin{equation*}
\int_{U_{1}}\partial ^{\nu +\kappa }f(x+l_{\alpha })\prod_{\kappa _{i}\neq
0,j}\partial ^{\gamma _{j}^{i}}l_{\alpha }^{i}d\pi 
\end{equation*}%
with the non-zero multiindices $\gamma _{j}^{i}\in \mathbf{N}_{0}^{d}$ such
that $\sum_{\kappa _{i}\neq 0,j}\gamma _{j}^{i}=\mu $ and $|\mu |\geq
|\kappa |\geq 2$.

Applying H\"{o}lder's inequality, we have 
\begin{eqnarray}
|A_{1}(x)| &\leq &C|f|_{\alpha +[\beta ]}\int_{U_{1}}(|l_{\alpha
}(x,\upsilon )|\wedge 1)^{\alpha -1}|\partial _{z}^{\mu }l_{\alpha
}(x,\upsilon )|d\pi   \notag  \label{nf2} \\
&\leq &C|f|_{\alpha +[\beta ]}\Big(\int_{U_{1}}(|l_{\alpha }(x,\upsilon
)|\wedge 1)^{\alpha }d\pi \Big)^{1-\frac{1}{\alpha }}\Big(%
\int_{U_{1}}|\partial _{z}^{\mu }l_{\alpha }(x,\upsilon )|^{\alpha }d\pi %
\Big)^{1/\alpha }  \notag \\
&\leq &C|f|_{\alpha +[\beta ]}.
\end{eqnarray}%
Obviously,%
\begin{equation*}
\Big|\int_{U_{1}}\partial ^{\nu +\kappa }f(x+l_{\alpha }(x,\upsilon
))\prod_{\kappa _{i}\neq 0,j}\partial ^{\gamma _{j}^{i}}l_{\alpha
}(x,\upsilon )^{i}d\pi \Big|\leq |f|_{\beta }\int_{U_{1}}\prod_{\kappa
_{i}\neq 0,j}|\partial ^{\gamma _{j}^{i}}l_{\alpha }^{i}(x,\upsilon )|d\pi .
\end{equation*}%
By Lemma \ref{nle1}, 
\begin{equation*}
\int_{U_{1}}\prod_{\substack{ \kappa _{i}\neq 0, \\ (i,j)\neq (i_{0},j_{0})}}%
|\partial ^{\gamma _{j}^{i}}l_{\alpha }^{i}|^{\frac{\alpha }{\alpha -1}}d\pi
\leq C\prod_{\kappa _{i}\neq 0,j}\Big[\Big(\int_{U_{1}}|\partial ^{\gamma
_{j}^{i}}l_{\alpha }^{i}|^{\alpha }d\pi \Big)^{\frac{1}{\alpha }}+\Big(\int
|\partial ^{\gamma _{j}^{i}}l_{\alpha }^{i}|^{\frac{[\beta ]}{|\gamma
_{j}^{i}|}}d\pi \Big)^{\frac{|\gamma _{j}^{i}|}{[\beta ]}}\Big].
\end{equation*}%
Hence, $|A_{2}(x)|\leq C|f|_{\beta },x\in \mathbf{R}^{d}.$Thus there exists $%
\beta ^{\prime }<\beta $ such that $|T_{1}(x)|\leq C|f|_{\beta ^{\prime
}},x\in \mathbf{R}^{d}.$

Now we estimate the differences. For $x,x^{\prime }\in \mathbf{R}^{d}$ and a
multiindex $\sigma $, denote $\Delta \partial ^{\sigma }l_{\alpha
}(x,x^{\prime };\upsilon )=\partial ^{\sigma }l_{\alpha }(x,\upsilon
)-\partial ^{\sigma }l_{\alpha }(x^{\prime },\upsilon ),\upsilon \in U$. For
any $\beta ^{\prime }>[\beta ]+1,$ 
\begin{eqnarray*}
|A_{1}(x)-A_{1}(x^{\prime })| &\leq &|f|_{\beta ^{\prime }}\Big[%
\int_{U_{1}}(|\Delta l_{\alpha }(x,x^{\prime };\upsilon )|\wedge 1)|\partial
_{z}^{\mu }l_{\alpha }(x,\upsilon )|d\pi  \\
&&+\int_{U_{1}}(|l_{\alpha }(x^{\prime },\upsilon )|\wedge 1|)|\Delta
\partial _{z}^{\mu }l_{\alpha }(x,x^{\prime };\upsilon )|d\pi \Big] \\
&=&|f|_{\beta ^{\prime }}[A_{11}+A_{12}].
\end{eqnarray*}%
Now%
\begin{eqnarray*}
A_{12} &\leq &\Big(\int_{U_{1}}(|l_{\alpha }(x^{\prime },\upsilon )|\wedge
1)^{\frac{\alpha }{\alpha -1}}d\pi \Big)^{1-\frac{1}{\alpha }}\Big(%
\int_{U_{1}}|\Delta \partial _{z}^{\mu }l_{\alpha }(x,x^{\prime };\upsilon
)|^{\alpha }d\pi \Big)^{\frac{1}{\alpha }} \\
&\leq &\Big(\int_{U_{1}}(|l_{\alpha }(x^{\prime },\upsilon )|\wedge
1)^{\alpha }d\pi \Big)^{1-\frac{1}{\alpha }}\Big(\int_{U_{1}}|\Delta
\partial _{z}^{\mu }l_{\alpha }(x,x^{\prime };\upsilon )|^{\alpha }d\pi \Big)%
^{\frac{1}{\alpha }} \\
&\leq &C|x-x^{\prime }|^{\beta -[\beta ]}.
\end{eqnarray*}%
Obviously, 
\begin{eqnarray*}
A_{11} &=&\int_{U_{1}}(|\Delta l_{\alpha }(x,x^{\prime };\upsilon )|\wedge
1)|\partial ^{\mu }l_{\alpha }(x,\upsilon )|d\pi  \\
&\leq &\int_{U_{1}}\mathbf{1}_{\big\{|\partial _{z}^{\mu }l_{\alpha
}(x,\upsilon )|\leq 1\big\}}(|\Delta l_{\alpha }(x,x^{\prime };\upsilon
)|\wedge 1)|\partial ^{\mu }l_{\alpha }(x,\upsilon )|d\pi  \\
&&+\int_{U_{1}}\mathbf{1}_{\big\{|\partial _{z}^{\mu }l_{\alpha }(x,\upsilon
)|>1\big\}}(|\Delta l_{\alpha }(x,x^{\prime };\upsilon )|\wedge 1)|\partial
_{z}^{\mu }l_{\alpha }(x,\upsilon )|d\pi .
\end{eqnarray*}%
By H\"{o}lder's inequality,%
\begin{eqnarray*}
&&\int_{|\partial _{z}^{\mu }l_{\alpha }(x,\upsilon )|\leq 1}\mathbf{1}%
_{U_{1}}(\upsilon )(|\Delta l_{\alpha }(x,x^{\prime };\upsilon )|\wedge
1)|\partial ^{\mu }l_{\alpha }(x,\upsilon )|d\pi  \\
&\leq &\Big(\int_{|\partial ^{\mu }l_{\alpha }(x,\upsilon )|\leq 1}\mathbf{1}%
_{U_{1}}(\upsilon )(|\Delta l_{\alpha }(x,x^{\prime };\upsilon )|\wedge
1)^{\alpha }d\pi \Big)^{\frac{1}{\alpha }} \\
&&\times \Big(\int_{|\partial ^{\mu }l_{\alpha }(x,\upsilon )|\leq 1}\mathbf{%
1}_{U_{1}}(\upsilon )|\partial _{z}^{\mu }l_{\alpha }(x,\upsilon )|^{\frac{%
\alpha }{\alpha -1}}d\pi \Big)^{1-\frac{1}{\alpha }} \\
&\leq &C|x-x^{\prime }|^{\beta -[\beta ]}\big(\int_{|\partial _{z}^{\mu
}l_{\alpha }(x,\upsilon )|\leq 1}\mathbf{1}_{U_{1}}(\upsilon )|\partial
^{\mu }l_{\alpha }(x,\upsilon )|^{\alpha }d\pi \big)^{1-\frac{1}{\alpha }} \\
&\leq &C|x-x^{\prime }|^{\beta -[\beta ]},
\end{eqnarray*}%
and%
\begin{eqnarray*}
&&\int_{|\partial ^{\mu }l_{\alpha }(x,\upsilon )|>1}\mathbf{1}%
_{U_{1}}(\upsilon )(|\Delta l_{\alpha }(x,x^{\prime };\upsilon )|\wedge
1)|\partial ^{\mu }l_{\alpha }(x,\upsilon )|d\pi  \\
&\leq &\int_{|\partial ^{\mu }l_{\alpha }(x,\upsilon )|>1}\mathbf{1}%
_{U_{1}}(\upsilon )(|\Delta l_{\alpha }(x,x^{\prime };\upsilon )|\wedge
1)|\partial ^{\mu }l_{\alpha }(x,\upsilon )|^{\alpha }d\pi  \\
&\leq &C|x-x^{\prime }|^{\beta -[\beta ]}.
\end{eqnarray*}%
Hence,%
\begin{equation*}
A_{11}\leq C|x-x^{\prime }|^{\beta -[\beta ]},A_{12}\leq C|x-x^{\prime
}|^{\beta -[\beta ]}.
\end{equation*}

Since $A_{2}(x)$ consists of the sum whose terms are of the form 
\begin{equation*}
\int \partial ^{\nu +\kappa }f(x+l_{\alpha })\prod_{\kappa _{i}\neq
0,j}\partial ^{\gamma _{j}^{i}}l_{\alpha }^{i}d\pi
\end{equation*}%
with the non-zero multiindices $\gamma _{j}^{i}\in \mathbf{N}_{0}^{d}$ such
that $\sum_{\kappa _{i}\neq 0,j}\gamma _{j}^{i}=\mu $ and $|\mu |\geq
|\kappa |\geq 2,$ we estimate the differences of a generic term%
\begin{equation*}
\tilde{A}_{2}(x)=\int \partial ^{\nu +\kappa }f(x+l_{\alpha }(x,\upsilon
))\prod_{\kappa _{i}\neq 0,j}\partial ^{\gamma _{j}^{i}}l_{\alpha
}^{i}(x,\upsilon )d\pi .
\end{equation*}%
We have%
\begin{eqnarray*}
&&|\tilde{A}_{2}(x)-\tilde{A}_{2}(x^{\prime })| \\
&\leq &\int \big|\partial ^{\nu +\kappa }f(x+l_{\alpha }(x,\upsilon
))-\partial ^{\nu +\kappa }f(x^{\prime }+l_{\alpha }(x^{\prime },\upsilon ))%
\big|\prod_{\kappa _{i}\neq 0,j}|\partial ^{\gamma _{j}^{i}}l_{\alpha
}^{i}(x,\upsilon )|d\pi \\
&&+\sup_{x}|\partial ^{\nu +\kappa }f(x)|\int_{U_{1}}\big|\prod_{\kappa
_{i}\neq 0,j}\partial ^{\gamma _{j}^{i}}l_{\alpha }^{i}(x,\upsilon
)-\prod_{\kappa _{i}\neq 0,j}\partial ^{\gamma _{j}^{i}}l_{\alpha
}^{i}(x^{\prime },\upsilon )\big|d\pi \\
&=&\tilde{A}_{21}+\tilde{A}_{22}.
\end{eqnarray*}%
First, by Lemma \ref{nle1} with $\lambda =\alpha $,%
\begin{eqnarray*}
\tilde{A}_{21} &\leq &\int_{U_{1}}\big( |x-x^{\prime }|^{\beta -[\beta
]}+|\Delta l_{\alpha }(x,x^{\prime };\upsilon )|\wedge 1\big) \prod_{\kappa
_{i}\neq 0,j}|\partial ^{\gamma _{j}^{i}}l_{\alpha }^{i}(x,\upsilon )|d\pi \\
&\leq &C|f|_{\beta +1}\sum_{j}\int_{U_{1}}\big( |x-x^{\prime }|^{\beta
-[\beta ]}+|\Delta l_{\alpha }(x,x^{\prime };\upsilon )|\wedge 1\big) %
\lbrack |\partial ^{\gamma _{j}^{i}}l_{\alpha }|^{\frac{[\beta ]}{|\gamma
_{j}|}\vee \alpha }+|\partial ^{\gamma _{j}^{i}}l_{\alpha }|^{\alpha }]d\pi
\\
&\leq &C|f|_{\beta +1}|x-x^{\prime }|^{\beta -[\beta ]},
\end{eqnarray*}%
and%
\begin{eqnarray*}
\tilde{A}_{22} &\leq &|f|_{\beta }\int_{U_{1}}\big|\prod_{\kappa _{i}\neq
0,j}\partial ^{\gamma _{j}^{i}}l_{\alpha }^{i}(x,\upsilon )-\prod_{\kappa
_{i}\neq 0,j}\partial ^{\gamma _{j}^{i}}l_{\alpha }^{i}(x^{\prime },\upsilon
)\big|d\pi \\
&\leq &C|f|_{\beta }\sum_{\kappa _{i}\neq 0,j}\Big(\int_{U_{1}}|\Delta
\partial ^{\gamma _{j}^{i}}l_{\alpha }^{i}(x,x^{\prime },\upsilon )|^{\frac{%
[\beta ]}{|\gamma _{j}|}\vee \alpha }d\eta \Big)^{\big( \frac{|\gamma _{j}|}{%
[\beta ]}\wedge \alpha \big) \theta (\alpha ,j)} \\
&&\times \Big(\int_{U_{1}}|\Delta \partial ^{\gamma _{j}^{i}}l_{\alpha
}^{i}(x,x^{\prime },\upsilon )|^{\alpha }d\eta \Big)^{\alpha (1-\theta
(\alpha ,j))} \\
&\leq &C|f|_{\beta }|x-x^{\prime }|^{\beta -[\beta ]}.
\end{eqnarray*}

\emph{Estimate of }$T_{2}$. \ The part $T_{2}(x)$ consists of the sum whose
terms are of the form 
\begin{equation*}
\int \theta _{\alpha }(\upsilon )\partial ^{\nu +\kappa }f(x+l_{\alpha
})\prod_{\kappa _{i}\neq 0,j}\partial ^{\gamma _{j}^{i}}l_{\alpha }^{i}d\pi
\end{equation*}%
with the non-zero multiindices $\gamma _{j}^{i}\in \mathbf{N}_{0}^{d}$ such
that $\sum_{\kappa _{i}\neq 0,j}\gamma _{j}^{i}=\mu $ and $|\mu |\geq
|\kappa |\geq 1$. By Lemma \ref{nle1},%
\begin{eqnarray*}
&&\big\vert \int \theta _{\alpha }(\upsilon )\partial ^{\nu +\kappa
}f(x+l_{\alpha })\prod_{\kappa _{i}\neq 0,j}\partial ^{\gamma
_{j}^{i}}l_{\alpha }^{i}d\pi \big\vert \\
&\leq &\sup_{x}|\partial ^{\nu +\kappa }f(x)|\prod_{\kappa _{i}\neq 0,j} %
\Big[ \Big( \int \theta _{\alpha }(\upsilon )|\partial ^{\gamma
_{j}^{i}}l_{\alpha }^{i}|d\pi \Big) +\Big( \int \theta _{\alpha }(\upsilon
)|\partial ^{\gamma _{j}^{i}}l_{\alpha }^{i}|^{\frac{[\beta ]}{|\gamma
_{j}^{i}|}}d\pi \Big) ^{\frac{|\gamma _{j}^{i}|}{[\beta ]}}\Big] \\
&\leq &C|f|_{\beta }.
\end{eqnarray*}%
For $x,x^{\prime }\in \mathbf{R}^{d}$, 
\begin{eqnarray*}
&&\Big|\int \theta _{\alpha }(\upsilon )\partial ^{\nu +\kappa
}f(x+l_{\alpha }(x,\upsilon ))\prod_{\kappa _{i}\neq 0,j}\partial ^{\gamma
_{j}^{i}}l_{\alpha }^{i}(x,\upsilon )d\pi \\
&&-\int \theta _{\alpha }(\upsilon )\partial ^{\nu +\kappa }f(x^{\prime
}+l_{\alpha }(x^{\prime },\upsilon ))\prod_{\kappa _{i}\neq 0,j}\partial
^{\gamma _{j}^{i}}l_{\alpha }^{i}(x^{\prime },\upsilon )d\pi \Big| \\
&\leq &C\int \big|\partial ^{\nu +\kappa }f(x+l_{\alpha }(x,\upsilon
))-\partial ^{\nu +\kappa }f(x^{\prime }+l_{\alpha }(x^{\prime },\upsilon ))%
\big|\prod_{\kappa _{i}\neq 0,j}|\partial ^{\gamma _{j}^{i}}l_{\alpha
}^{i}(x,\upsilon )|d\pi \\
&&+|f|_{\beta }\int \big|\prod_{\kappa _{i}\neq 0,j}\partial ^{\gamma
_{j}^{i}}l_{\alpha }^{i}(x,\upsilon )-\prod_{\kappa _{i}\neq 0,j}\partial
^{\gamma _{j}^{i}}l_{\alpha }^{i}(x^{\prime },\upsilon )\big|d\pi \\
&=&A+B.
\end{eqnarray*}

For the first term, by assumption \textup{A3}($\beta )$ with $\beta ^{\prime
}<\beta $, 
\begin{eqnarray*}
A &\leq &C|x-x^{\prime }|^{\beta -[\beta ]}|f|_{\beta }\int \theta _{\alpha
}(\upsilon )\prod_{\kappa _{i}\neq 0,j}|\partial ^{\gamma _{j}^{i}}l_{\alpha
}^{i}(x,\upsilon )|d\pi  \\
&&+|f|_{\alpha +\beta ^{\prime }}\int \theta _{\alpha }(\upsilon )\big(%
|\Delta l_{\alpha }(x,x^{\prime };\upsilon )|\wedge 1\big)^{(\alpha +\beta
^{\prime }-[\beta ])\wedge 1}\prod_{\kappa _{i}\neq 0,j}|\partial ^{\gamma
_{j}^{i}}l_{\alpha }^{i}(x,\upsilon )|d\pi \}
\end{eqnarray*}%
By Lemma \ref{nle1} with $\lambda =1$,%
\begin{eqnarray*}
&&\int \theta _{\alpha }(\upsilon )\big(|\Delta c(x,x^{\prime };\upsilon
)|\wedge 1\big)^{(\alpha +\beta ^{\prime }-[\beta ])\wedge 1}\prod_{\kappa
_{i}\neq 0,j}|\partial ^{\gamma _{j}^{i}}c^{i}(x,\upsilon )|d\pi  \\
&\leq &\sum_{\kappa _{i}\neq 0,j}\int \theta _{\alpha }(\upsilon )\big(%
|\Delta c(x,x^{\prime };\upsilon )|\wedge 1\big)^{(\alpha +\beta ^{\prime
}-[\beta ])\wedge 1}(|\partial ^{\gamma _{j}^{i}}c^{i}|+|\partial ^{\gamma
_{j}^{i}}c^{i}|^{\frac{[\beta ]}{|\gamma _{j}^{i}|}})d\pi  \\
&\leq &C|x-x^{\prime }|^{\beta -[\beta ]}.
\end{eqnarray*}%
and%
\begin{equation*}
\int \theta _{\alpha }(\upsilon )\prod_{\kappa _{i}\neq 0,j}|\partial
^{\gamma _{j}^{i}}l_{\alpha }^{i}(x,\upsilon )|d\pi \leq \sum_{\kappa
_{i}\neq 0,j}\int \theta _{\alpha }(\upsilon )(|\partial ^{\gamma
_{j}^{i}}l_{\alpha }^{i}|+|\partial ^{\gamma _{j}^{i}}l_{\alpha }^{i}|^{%
\frac{[\beta ]}{|\gamma _{j}^{i}|}}d\pi \leq C.
\end{equation*}%
Hence, $A\leq C|f|_{\alpha +\beta ^{\prime }}|x-x^{\prime }|^{\beta -[\beta
]}.$

By lemma \ref{nle1} with $\lambda =1,$%
\begin{eqnarray*}
&&\int \theta _{\alpha }(\upsilon )\big|\prod_{\kappa _{i}\neq 0,j}\partial
^{\gamma _{j}^{i}}l_{\alpha }^{i}(x,\upsilon )-\prod_{\kappa _{i}\neq
0,j}\partial ^{\gamma _{j}^{i}}l_{\alpha }^{i}(x^{\prime },\upsilon )\big|%
d\pi \\
&\leq &C\sum_{\kappa _{i}\neq 0,j} \Big\{\Big(\int \theta _{\alpha
}(\upsilon )\big|\partial ^{\gamma _{j}^{i}}l_{\alpha }^{i}(x,\upsilon
)-\partial ^{\gamma _{j}^{i}}l_{\alpha }^{i}(x^{\prime },\upsilon )\big|^{%
\frac{[\beta ]}{|\gamma _{j}^{i}|}}d\pi \Big)^{\frac{|\gamma _{j}^{i}|}{%
[\beta ]}} \\
&&+\int \theta _{\alpha }(\upsilon )|\partial ^{\gamma _{j}^{i}}l_{\alpha
}^{i}(x,\upsilon )-\partial ^{\gamma _{j}^{i}}l_{\alpha }^{i}(x^{\prime
},\upsilon )|d\pi \Big\} \\
&\leq &C|x-x^{\prime }|^{\beta -[\beta ]}.
\end{eqnarray*}%
Thus%
\begin{equation*}
|T_{2}(x)-T_{2}(x^{\prime })|\leq C|f|_{\alpha +\beta ^{\prime
}}|x-x^{\prime }|^{\beta -[\beta ]}.
\end{equation*}

The statement follows by the standard interpolation inequalities.
\end{proof}

\subsection{Proof of Theorem \protect\ref{thm:StoCP} and Corollary \protect
\ref{lcornew1}}


It is well known that for an arbitrary but fixed $\delta >0$ there is a
family of cubes $D_{k}\subseteq \tilde{D}_{k}\subseteq \mathbf{R}^{d}$ and a
family of $\eta _{k}\in C_{0}^{\infty }(\mathbf{R}^{d})$ with the following
properties:

\begin{enumerate}
\item For all $k\geq 1,D_{k}$ and $\tilde{D}_{k}$ have a common center $%
x_{k},$ diam$D_{k}\leq \delta$, dist$(D_{k},\mathbf{R}^{d}\backslash \tilde{D%
}_{k})\leq N\delta $ for a certain constant $N=N(d)>0$, $\bigcup _{k}D_{k}=%
\mathbf{R}^{d}$, and $1\leq \sum_{k}\mathbf{1}_{\tilde{D}_{k}}\leq 2^{d}.$

\item For all $k$, $0\leq \eta _{k}\leq 1,\eta _{k}=1$ in $D_{k},\eta _{k}=0$
outside of $\tilde{D}_{k}$ and for all multiindices $\gamma ,$ $|\partial
^{\gamma }\eta _{k}|\leq C(d,\delta ,|\gamma |)<\infty .$
\end{enumerate}

For $\alpha \in (0,2],\lambda \geq 0,k\geq 1,$ denote%
\begin{eqnarray*}
\mathcal{A}^{(\alpha ),k}f(x) &=&\mathcal{A}_{x_{k}}^{(\alpha )}f(x),%
\mathcal{L}_{\lambda }^{(\alpha ),k}f(x)=(\frac{\partial }{\partial t}+%
\mathcal{A}^{(\alpha ),k}-\lambda )f(x), \\
E^{(\alpha ),k}f(x) &=&\int [f(x+y)-f(x)][\eta _{k}(x+y)-\eta _{k}(x)]\tilde{%
m}_{\alpha }(x_{k},y)\frac{dy}{|y|^{d+\alpha }}, \\
E_{k,1}^{(\alpha )}f(x) &=&\int [f(x+y)-f(x)][\eta _{k}(x+y)-\eta _{k}(x)]%
\frac{dy}{|y|^{d+\alpha }}, \\
F^{(\alpha ),k}f(x) &=&f(x)\mathcal{A}^{(\alpha ),k}\eta
_{k}(x),F_{1}^{(\alpha ),k}f(x)=f(x)|\partial |^{\alpha }\eta _{k}(x),x\in 
\mathbf{R}^{d}.
\end{eqnarray*}

We will need to estimate these operators.

\begin{lemma}
\label{l8}Let $\alpha \in (0,2]$ and $\beta >0, \beta \notin \mathbf{N}_{0}.$
Then

\begin{itemize}
\item[\textup{a)}] for each $\varepsilon >0$, there exists a constant $%
C_{\varepsilon }$ such that for all $f\in C^{\beta }(\mathbf{R}^{d})$, 
\begin{equation*}
\sup_{k}\big( |E_{k}^{(\alpha )}f|_{\beta }+|E_{k,1}^{(\alpha )}f|_{\beta }%
\big) \leq \varepsilon ||\partial |^{\alpha }f|_{\beta }+C_{\varepsilon
}|f|_{\beta -[\beta ]};
\end{equation*}

\item[\textup{b)}] There is a constant $N=N(\alpha ,\beta ,d,\delta
,M^{(\alpha )})$ such that for all $f\in C^{\beta }(\mathbf{R}^{d})$, 
\begin{equation*}
\sup_{k}\big( |F_{k}^{(\alpha )}f|_{\beta }+|F_{k,1}^{(\alpha )}f|_{\beta }%
\big) \leq N|f|_{\beta }.
\end{equation*}
\end{itemize}
\end{lemma}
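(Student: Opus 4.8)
The plan is to treat the four operators $E_k^{(\alpha)}$, $E_{k,1}^{(\alpha)}$ (part a) and $F_k^{(\alpha)}$, $F_{k,1}^{(\alpha)}$ (part b) in parallel, since $E_{k,1}^{(\alpha)}$ and $F_{k,1}^{(\alpha)}$ are just the special case $\tilde m_\alpha\equiv 1$ of the first pair, and the nondegeneracy and boundedness hypotheses A1--A2 together with the homogeneity and regularity of $m_\alpha$ (hence of $\tilde m_\alpha(x_k,\cdot)$) give uniform-in-$k$ control on all the constants that appear. For part b) the argument is essentially algebraic: $\mathcal A^{(\alpha),k}\eta_k$ and $|\partial|^\alpha\eta_k$ are fixed smooth functions (the cutoffs $\eta_k$ are translates/rescalings of finitely many profiles, so their $C^m$ norms are bounded by $C(d,\delta,m)$ uniformly in $k$), and applying $\mathcal A^{(\alpha),k}$ to a $C_0^\infty$ bump gives a function whose $C^\beta$ norm is bounded by a constant depending only on $\alpha,\beta,d,\delta$ and $M^{(\alpha)}$ (via the uniform bounds on $b$, $a_1$ and $\partial_y^\gamma m_\alpha$). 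Then $F_k^{(\alpha)}f = f\cdot(\mathcal A^{(\alpha),k}\eta_k)$ and the estimate $|fg|_\beta\le C|f|_\beta|g|_{[\beta]+1}$ for the product of a $C^\beta$ function and a smooth function finishes it; same for $F_{k,1}^{(\alpha)}$.

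For part a), the key point is to exploit the cancellation built into the factor $\eta_k(x+y)-\eta_k(x)$, which vanishes to first order in $y$ for small $y$ and is bounded (and supported where $|y|$ is not too large, or killed by the tail of $|y|^{-d-\alpha}$) for large $y$. The natural decomposition is $E_k^{(\alpha)}f(x)=\int_{|y|\le1}+\int_{|y|>1}$. On $|y|>1$ the kernel $|y|^{-d-\alpha}$ is integrable and $|\eta_k(x+y)-\eta_k(x)|\le\min(1,C|y|)$, so this piece is controlled by $C|f|_{\beta-[\beta]}$ directly (no loss of derivatives — we only ever difference $f$, never differentiate it past order $[\beta]$). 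On $|y|\le1$, I would use Lemma \ref{r1}: for a multiindex $\mu$ with $|\mu|=[\beta]$ and $\delta':=\beta-[\beta]=\{\beta\}\in(0,1)$ in the $\alpha$-range, or more precisely writing the increment $f(x+y)-f(x)$ (after differentiating $[\beta]$ times in $x$) as a fractional-derivative convolution against $k^{(\cdot)}(y,\cdot)$, to convert the singular integral into $|\partial|^\alpha f$ against a kernel whose total mass on $\{|y|\le1\}$ is $O(\varepsilon)$ once we further split $|y|\le\rho$ versus $\rho<|y|\le1$ and send $\rho\to0$. This is exactly the mechanism that produces the $\varepsilon||\partial|^\alpha f|_\beta + C_\varepsilon|f|_{\beta-[\beta]}$ form: the small-$y$ part near the singularity carries the $\varepsilon$ and the fractional Laplacian, while the remainder is absorbed into lower-order norms. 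One must also estimate the Hölder seminorm of $E_k^{(\alpha)}f$ by differencing in $x$, which is the same computation with $f$ replaced by its difference quotient, again using that $|\eta_k(\cdot+y)-\eta_k(\cdot)|$ and its $x$-increments are uniformly controlled.

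The main obstacle I expect is the bookkeeping in the small-$y$ regime: one needs the splitting parameter $\rho=\rho(\varepsilon)$ to interact correctly with the order of the fractional derivative being extracted (order $\alpha$, applied to $\partial^{[\beta]}f$, yielding something in $C^{\alpha+\{\beta\}}$ up to the loss), and to verify that the $C^{\{\beta\}}$-Hölder bound on the resulting convolution is genuinely $\varepsilon\cdot||\partial|^\alpha f|_\beta$ and not merely $\varepsilon|f|_{\alpha+\beta}$ with a different constant — i.e. that only the principal fractional-Laplacian part, not $f$ itself, multiplies $\varepsilon$. The homogeneity of degree zero and $d_0$-fold differentiability of $m_\alpha$ (so that $\tilde m_\alpha(x_k,y/|y|)$ is bounded with bounded angular derivatives, uniformly in $k$) are what keep every constant independent of $k$; this uniformity should be flagged explicitly but is routine given A1--A2. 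The cases $\alpha=2$ (where $|\partial|^\alpha=\Delta$ and $E_k^{(2)}f=\frac12\sum B^{ij}(x_k)\,\partial^2$-type second-difference expression) and $\alpha=1$ (where the symmetry $m_1(x,-y)=m_1(x,y)$ kills the would-be logarithmically divergent first-order term) are handled by the same split, with the symmetry/second-difference structure supplying the needed cancellation in place of, or in addition to, the vanishing of $\eta_k(x+y)-\eta_k(x)$.
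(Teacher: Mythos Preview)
Your treatment of part b) matches the paper's: $\mathcal A^{(\alpha),k}\eta_k$ and $|\partial|^\alpha\eta_k$ are uniformly bounded in $C^\beta$, and multiplication by such a factor is bounded on $C^\beta$.

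For part a) you take a different and heavier route than the paper. The paper does \emph{not} invoke Lemma~\ref{r1} here at all. It simply writes, for any cutoff $\kappa>0$,
\[
E^{(\alpha),k}f(x)=\int_{|y|\le\kappa}\cdots+\int_{|y|>\kappa}\cdots,
\]
replaces $\eta_k(x+y)-\eta_k(x)$ by $\int_0^1(\nabla\eta_k(x+ry),y)\,dr$ on the inner piece (gaining one power of $|y|$), and for $\alpha\ge1$ also replaces $f(x+y)-f(x)$ by $\int_0^1(\nabla f(x+sy),y)\,ds$ (gaining a second power). This gives
\[
|E^{(\alpha),k}f|_\beta\le C\Big(\mathbf 1_{\{\alpha\ge1\}}\kappa^{2-\alpha}|\nabla f|_\beta+\mathbf 1_{\{\alpha<1\}}\kappa^{1-\alpha}|f|_\beta+\kappa^{-\alpha}|f|_\beta\Big),
\]
after which one chooses $\kappa$ small and interpolates $|\nabla f|_\beta$ and $|f|_\beta$ between $||\partial|^\alpha f|_\beta$ and $|f|_{\beta-[\beta]}$ in the standard way. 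Only the mean value theorem is used.

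Your Lemma~\ref{r1} plan can be made to work, but as written it has a loose end: Lemma~\ref{r1} requires $\delta\in(0,1)$, so for $\alpha\ge1$ you cannot ``convert the singular integral into $|\partial|^\alpha f$'' in one application. You would first have to pass to $\nabla f$ via the mean value theorem and then apply Lemma~\ref{r1} with $\delta=\alpha-1$ (as the paper does elsewhere, in the $S_2$ estimate of Proposition~\ref{b1}); at that point you are reproducing the paper's computation with extra machinery layered on top. The two-stage split $|y|\le\rho$ versus $\rho<|y|\le1$ versus $|y|>1$ is also unnecessary: a single cut at $\kappa$ already separates the small-$\varepsilon$ principal part from the lower-order remainder.
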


\begin{proof}
For any $\kappa >0$%
\begin{eqnarray*}
E^{(\alpha ),k}f(x) &=&\int_{0}^{1}\int_{0}^{1}\int_{|y|\leq \kappa }(\nabla
f(x+sy),y)(\nabla \eta _{k}(x+ry),y)\mu _{k}^{(\alpha )}(dy)drds\mathbf{1}%
_{\{\alpha \geq 1\}} \\
&&+\int_{0}^{1}\int_{|y|\leq \kappa }[f(x+y)-f(x)](\nabla \eta
_{k}(x+ry),y)\mu _{k}^{(\alpha )}(dy)dr\mathbf{1}_{\{\alpha <1\}} \\
&&+\int_{|y|>\kappa }[f(x+y)-f(x)][\eta _{k}(x+y)-\eta _{k}(x)]\mu
_{k}^{(\alpha )}(dy),
\end{eqnarray*}%
where $\mu _{k}^{(\alpha )}(dy)=\tilde{m}_{\alpha }(x_{k},y)|y|^{-d-\alpha
}dy$. Clearly, 
\begin{eqnarray*}
|E^{(\alpha ),k}f|_{\beta } &\leq &C\Big\{\mathbf{1}_{\{\alpha \geq
1\}}|\nabla f|_{\beta ;p}\int_{|y|\leq \kappa }|y|^{-d-\alpha +2}dy \\
&&+|f|_{\beta }\Big(\mathbf{1}_{\{\alpha <1\}}\int_{|y|\leq \kappa
}|y|^{-d-\alpha +1}dy+\int_{|y|>\kappa }|y|^{-d-\alpha }dy\Big)\Big\}
\end{eqnarray*}%
The part b) is straightforward.
\end{proof}

\subsubsection{Proof of Theorem \protect\ref{thm:StoCP}}

It can be easily seen that for any $f\in C^{\alpha +\beta }(\mathbf{R}^{d})$%
, 
\begin{eqnarray}
\sup_{x}|f(x)| &\leq &\sup_{x}\sup_{k}\eta
_{k}(x)|f(x)|=\sup_{k}\sup_{x}|\eta _{k}(x)f(x)|,  \notag \\
|f|_{\beta } &\leq &\sup_{k}|\eta _{k}f|_{\beta
}+N\sup_{x}|f(x)|,\sup_{k}|\eta _{k}f|_{\beta }\leq |f|_{\beta
}+N\sup_{x}|f(x)|  \label{f14_150}
\end{eqnarray}

Indeed, for each $x,y\in \mathbf{R}^{d},$%
\begin{eqnarray*}
&&|\partial ^{\lbrack \beta ]}f(x)-\partial ^{\lbrack \beta ]}f(y)| \\
&=&\sup_{k}\eta _{k}(x)|\partial ^{\lbrack \beta ]}f(x)-\partial ^{\lbrack
\beta ]}f(y)|=\sup_{k}|\eta _{k}(x)\partial ^{\lbrack \beta ]}f(x)-\eta
_{k}(x)\partial ^{\lbrack \beta ]}f(y)| \\
&\leq &\sup_{k}|\partial ^{\lbrack \beta ]}\eta _{k}(x)f(x)-\partial
^{\lbrack \beta ]}\eta _{k}(y)f(y)|+\sup_{k}|(\eta _{k}(y)-\eta _{k}(x))u(y)|
\\
&\leq &\sup_{k}|\partial ^{\lbrack \beta ]}\big(\eta _{k}(x)f(x)\big)%
-\partial ^{\lbrack \beta ]}\big(\eta _{k}(y)f(y)\big)| \\
&&+C|f|_{\beta -1}|x-y|^{\beta -[\beta ]}.
\end{eqnarray*}%
The second inequality in (\ref{f14_150}) then follows. Similarly we can
prove the last inequality in (\ref{f14_150}).

By (\ref{f14_150}) and Lemma 11 in \cite{MiZ10}, 
\begin{eqnarray*}
|f|_{\alpha +\beta } &\leq &C\sup_{x}|f(x)|+||\partial |^{\alpha }f|_{\beta
}\leq \sup_{k}|\eta _{k}|\partial |^{\alpha }f|_{\beta }+N\sup_{x}|f(x)| \\
&\leq &C[\sup_{k}||\partial |^{\alpha }(\eta _{k}f)|_{\beta
}+\sup_{k}|f|\partial |^{\alpha }\eta _{k}+E_{1}^{(\alpha ),k}f|_{\beta }.
\end{eqnarray*}%
and by Lemma \ref{l8},%
\begin{equation}
|f|_{\alpha +\beta }\leq C\sup_{k}|\eta _{k}u|_{\alpha +\beta }.  \label{in1}
\end{equation}

Let $u\in C^{\alpha +\beta }(H)$ be a solution to (\ref{eqn:cauchy_prf}).
Then $\eta _{k}u$ satisfies the equation%
\begin{eqnarray}  \label{eqk}
\partial _{t}(\eta _{k}u) &=&\mathcal{A}^{(\alpha ),k}(\eta _{k}u)-\lambda
(\eta _{k}u)+\eta _{k}[\mathcal{A}^{(\alpha )}u-\mathcal{A}^{(\alpha ),k}u] 
\notag \\
& & +\eta _{k}\mathcal{B}^{(\alpha )}u+\eta _{k}f +F^{(\alpha
),k}u+E^{(\alpha ),k}u,
\end{eqnarray}%
and by Proposition \ref{prop1},%
\begin{eqnarray*}
|\eta _{k}u|_{\alpha +\beta } \leq C[|\eta _{k}[\mathcal{A}^{(\alpha )}u-%
\mathcal{A}^{(\alpha ),k}u]|_{\beta }+|\eta _{k}B^{(\alpha )}u|_{\beta }
+|\eta _{k}f|_{\beta } +|F^{(\alpha ),k}u|_{\beta }+|E^{(\alpha
),k}u|_{\beta }].
\end{eqnarray*}%
Therefore,%
\begin{equation}
|u|_{\alpha +\beta }\leq C[\sup_{k}|\eta _{k}f|_{\beta }+I^{(\alpha )}],
\label{form00}
\end{equation}%
where%
\begin{eqnarray*}
I^{(\alpha )} = |\eta _{k}[\mathcal{A}^{(\alpha )}u-\mathcal{A}^{(\alpha
),k}u]|_{\beta }+|\eta _{k}B^{(\alpha )}u|_{\beta } +|F^{(\alpha
),k}u|_{\beta }+|E^{(\alpha ),k}u|_{\beta }.
\end{eqnarray*}%
By Corollary 14 \cite{MiZ10}, 
\begin{equation*}
|\eta _{k}[\mathcal{A}^{(\alpha )}u-\mathcal{A}^{(\alpha ),k}u]|_{\beta
}\leq C\delta ^{\beta }|u|_{\alpha +\beta }.
\end{equation*}%
Using the estimates of Lemma \ref{l8} and Proposition \ref{b1}, we obtain
that for each $\varepsilon >0\,\ $there is a constant $C_{\varepsilon }$
such that%
\begin{equation}
I^{(\alpha )}\leq \varepsilon |u|_{\alpha +\beta }+C_{\varepsilon
}\sup_{t,x}|u|.  \label{nf6}
\end{equation}

By (\ref{form00}), 
\begin{equation}
|u|_{\alpha +\beta }\leq C[|f|_{\beta }+|u|_{\beta }].  \label{form01}
\end{equation}

On the other hand, (\ref{eqk}) holds and by Proposition \ref{prop1},%
\begin{equation*}
|u|_{\beta }\leq \sup_{k}|\eta _{k}u|_{\beta }\leq \mu (\lambda )[|f|_{\beta
}+I_{(\alpha )}],
\end{equation*}%
where $\mu (\lambda )\rightarrow 0$ as $\lambda \rightarrow \infty $. Hence,
by (\ref{nf6}), 
\begin{equation}
|u|_{\beta }\leq C\mu (\lambda )[|f|_{\beta }+|u|_{\alpha +\beta }].
\label{form02}
\end{equation}%
The inequalities (\ref{form01}) and (\ref{form02}) imply that there exist $%
\lambda_{0}$ with $0 < \lambda_0 \le \lambda$ and a constant $C$ independent
of $u$ such that%
\begin{equation}
|u|_{\alpha +\beta }\leq C|f|_{\beta }  \label{form03}
\end{equation}%
If $u\in C^{\alpha +\beta }(H)$ solves equation (\ref{eqn:cauchy_prf}) with $%
\lambda \leq \lambda _{0}$, then $\tilde{u}(t,x)=e^{-(\lambda _{0}-\lambda
)t}u(t,x)$ solves the same equation with $\lambda _{0},$ and by (\ref{form03}%
),%
\begin{equation*}
|u|_{\alpha +\beta }\leq e^{(\lambda _{0}-\lambda )T}|\tilde{u}|_{\alpha
+\beta }\leq Ce^{(\lambda _{0}-\lambda )T}|f|_{\beta }.
\end{equation*}%
Thus (\ref{form03}) holds for all $\lambda \geq 0.$ Again by Proposition \ref%
{prop1} and (\ref{in1}), there is a constant $C$ such that for all $s\leq
t\leq T,$ 
\begin{eqnarray*}
|u(t,\cdot )-u(s,\cdot )|_{\alpha /2+\beta } \leq \sup_{k}|\eta
_{k}u(t,\cdot )-\eta _{k}u(s,\cdot )|_{\alpha /2+\beta } \leq
C(t-s)^{1/2}[|f|_{\beta }+|u|_{\alpha +\beta }].
\end{eqnarray*}%
Therefore there is a constant $C$ such that for all $s\leq t\leq T,$%
\begin{equation*}
|u(t,\cdot )-u(s,\cdot )|_{\alpha /2+\beta }\leq C(t-s)^{1/2}|f|_{\beta }.
\end{equation*}

Let $\,\mathcal{L=A}_{x}^{(\alpha )}+\mathcal{B}_{x}^{(\alpha )}-\lambda
,\tau \in \big[0,1\big],$ and%
\begin{equation*}
\mathcal{L}_{\tau }u=\tau \mathcal{L}u+\big(1-\tau \big)|\partial |^{\alpha
}u.
\end{equation*}%
We introduce the space $\tilde{C}^{\alpha +\beta }\big(H\big)$ of functions $%
u\in C^{\alpha +\beta }(H)$ such that for each $\big(t,x\big)$, 
\begin{equation*}
u\big(t,x\big)=\int_{0}^{t}F\big(s,x\big)\,ds,
\end{equation*}%
where $F\in C^{\beta }\big(H\big).$ It is a Banach space with respect to the
norm 
\begin{equation*}
\big\vert u\big\vert_{\alpha +\beta }^{\symbol{126}}=\big\vert u\big\vert%
_{\alpha +\beta }+\big\vert F\big\vert_{\beta }.
\end{equation*}%
Consider the mappings $T_{\tau }:\tilde{C}^{\alpha +\beta }\big(H\big)%
\rightarrow C^{\beta }$ defined by 
\begin{equation*}
u\big(t,x\big)=\int_{0}^{t}F\big(s,x\big)\,ds\longmapsto F-\mathcal{L}_{\tau
}u.
\end{equation*}%
Obviously, for some constant $C$ independent of $\tau $%
\begin{equation*}
\big\vert T_{\tau }u\big\vert_{\beta }\leq C\big\vert u\big\vert_{\alpha
+\beta }^{\symbol{126}}.
\end{equation*}%
On the other hand, there is a constant $C$ independent of $\tau $ such that
for all $u\in \tilde{C}^{\alpha +\beta }\big(H\big)$%
\begin{equation}
\big\vert u\big\vert_{\alpha +\beta }^{\symbol{126}}\leq C\big\vert T_{\tau
}u\big\vert_{\beta }.  \label{cp1}
\end{equation}%
Indeed, 
\begin{equation*}
u\big(t,x\big)=\int_{0}^{t}F\big(s,x\big)\,ds=\int_{0}^{t}\big(L_{\tau }u+(F-%
\mathcal{L}_{\tau }u)\big)\,ds.
\end{equation*}%
According to (\ref{form03}), there is a constant $C$ independent of $\tau $
such that 
\begin{equation}
\big\vert u\big\vert_{\alpha +\beta }\leq C\big\vert T_{\tau }u\big\vert%
_{\beta }=C\big\vert F-\mathcal{L}_{\tau }u\big\vert_{\beta }.  \label{cp2}
\end{equation}%
Thus, 
\begin{eqnarray*}
|u|_{\alpha +\beta }^{\symbol{126}} &=&|u|_{\alpha +\beta }+|F|_{\beta }\leq
|u|_{\alpha +\beta }+|F-\mathcal{L}_{\tau }u|_{\beta }+|\mathcal{L}_{\tau
}u|_{\beta } \\
&\leq &C|u|_{\alpha +\beta }+|F-\mathcal{L}_{\tau }u|_{\beta }\leq C|F-%
\mathcal{L}_{\tau }u|_{\beta }=C|T_{\tau }u|_{\beta },
\end{eqnarray*}%
and (\ref{cp1}) follows. Since $T_{0}$ is an onto map, by Theorem 5.2 in 
\cite{GiT83}, all the $T_{\tau }$ are onto maps and the statement follows.

\subsubsection{Proof of Corollary \protect\ref{lcornew1}}

By Corollary 14 in \cite{MiZ10} and Proposition \ref{b1}, for $g\in
C^{\alpha +\beta }(\mathbf{R}^{d})$, $|\mathcal{A}^{(\alpha )}g|_{\beta
}\leq C|g|_{\alpha +\beta }$ and $|\mathcal{B}^{(\alpha )}g|_{\beta }\leq
C|g|_{\alpha +\beta }$ with a constant $C$ independent of $f$ and $g$. It
then follows from (\ref{eqn:cauchy_prf}) that there exists a unique solution 
$\tilde{v}\in C^{\alpha +\beta }(H)$ to the Cauchy problem 
\begin{eqnarray}
\big( \partial _{t}+\mathcal{A}_{x}^{(\alpha )}+\mathcal{B}_{x}^{(\alpha )}%
\big) \tilde{v}(t,x) &=&f(t,x)-\mathcal{A}_{x}^{(\alpha )}g(x)-\mathcal{B}%
_{x}^{(\alpha )}g(x),  \notag \\
\tilde{v}(T,x) &=&0  \label{eqn:cauchy_v}
\end{eqnarray}%
and $|\tilde{v}|_{\alpha +\beta }\leq C\big( |g|_{\alpha +\beta }+|f|_{\beta
}\big) $ with $C$ independent of $f$ and $g$. Let $v(t,x)=\tilde{v}%
(t,x)+g(x) $, where $\tilde{v}$ is the solution to problem (\ref%
{eqn:cauchy_v}). Then $v $ is the unique solution to the Cauchy problem (\ref%
{maf8}) and $|v|_{\alpha +\beta }\leq C(|g|_{\alpha +\beta }+|f|_{\beta })$.

\begin{remark}
\label{rlast}If the assumptions of Corollary \ref{lcornew1} hold and $v\in
C^{\alpha +\beta }(H)$ is the solution to $(\ref{maf8})$, then $\partial
_{t}v=f-\mathcal{A}_{x}^{(\alpha )}v-\mathcal{B}_{x}^{(\alpha )}v$, and
according to Corollary 14 in \textup{\cite{MiZ10}} and Proposition \ref{b1}, 
$|\partial _{t}v|_{\beta }\leq C(|g|_{\alpha +\beta }+|f|_{\beta }).$
\end{remark}


\section{One Step Estimate and Proof of the Main Result}


The following Lemma provides a one-step estimate of the conditional
expectation of an increment of the Euler approximation.

\begin{lemma}
\label{lem:expect}Let $\alpha \in (0,2]$, $\beta >0,\beta \notin \mathbf{N}$%
, and $\delta >0$. Assume \textup{A1-A4$(\beta )$} hold. Then there exists a
constant $C$ such that for all $f\in C^{\beta }(\mathbf{R}^{d}),$%
\begin{equation*}
\big\vert\mathbf{E}\big[f(Y_{s})-f(Y_{\tau _{i_{s}}})|\mathcal{F}_{\tau
_{i_{s}}}\big]\big\vert\leq C|f|_{\beta }\delta ^{\kappa (\alpha ,\beta
)},\forall s\in \lbrack 0,T],
\end{equation*}%
where $i_{s}=i$ if $\tau _{i}\leq s<\tau _{i+1}$ and $\kappa (\alpha ,\beta
) $ is as defined in Theorem \ref{thm:main}.
\end{lemma}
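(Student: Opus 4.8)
The plan is to condition on $\mathcal{F}_{\tau_{i_s}}$ and exploit that, on the interval $[\tau_{i_s},s]$, the coefficients of the Euler scheme $Y$ in (\ref{two}) are frozen at $Y_{\tau_{i_s}}$. Set $\tau=\tau_{i_s}$, $h=s-\tau\le\delta$ and $z=Y_\tau$. It follows from Lemma~\ref{le1} and the form of (\ref{two}) that, conditionally on $\mathcal{F}_\tau$, $Y_s$ has the law of $z+\zeta^z_h$, where $\zeta^z$ is the L\'evy process whose (constant-coefficient) generator is $\mathcal{L}^{(\alpha)}_z=\mathcal{A}^{(\alpha)}_z+\mathcal{B}^{(\alpha)}_z$. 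Hence it suffices to prove the deterministic estimate $\sup_z|\mathbf{E}f(z+\zeta^z_h)-f(z)|\le C|f|_\beta h^{\kappa(\alpha,\beta)}$, and since $h\le\delta<1$ this gives the lemma. I treat the ranges $\beta>\alpha$ and $\beta<\alpha$ separately.

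\textbf{The case $\beta>\alpha$ ($\kappa=1$).} Here $\beta-\alpha>0$, so choosing a non-integer $\beta'\in(\alpha,\beta)$, Corollary~14 of \cite{MiZ10} (for $\mathcal{A}^{(\alpha)}$) and Proposition~\ref{b1} (for $\mathcal{B}^{(\alpha)}$), applied with H\"older index $\beta'-\alpha$, give $\sup_z|\mathcal{L}^{(\alpha)}_z f|_\infty\le C|f|_{\beta'}\le C|f|_\beta$ with $C$ independent of $z$. Dynkin's formula then yields $\mathbf{E}f(z+\zeta^z_h)-f(z)=\int_0^h\mathbf{E}(\mathcal{L}^{(\alpha)}_z f)(z+\zeta^z_r)\,dr$, bounded by $Ch|f|_\beta$. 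To legitimise Dynkin's formula for data that is only $C^\beta$, I would first replace $f$ by the smooth approximations $f_n\in C_b^\infty$ of Lemma~\ref{pagle1} ($|f_n|_\beta\le2|f|_\beta$, $f_n\to f$ in $C^{\beta'}$), apply Dynkin to $f_n$, and pass to the limit by dominated convergence, using $\mathcal{L}^{(\alpha)}_z f_n\to\mathcal{L}^{(\alpha)}_z f$ uniformly.

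\textbf{The case $\beta<\alpha$ ($\kappa=\beta/\alpha$).} Conditionally on $\mathcal{F}_\tau$, write $\zeta^z_h=\zeta^{z,\mathrm m}_h+\zeta^{z,\mathrm b}_h$, where $\zeta^{z,\mathrm b}_h$ is the sum of the finitely many ``large'' jumps — those of $p_0$ with $|y|>1$ and those of $p$ over $U_1^c$ (recall $\pi(U_1^c)<\infty$) — which occur on $[\tau,s]$ with probability $\le Ch$, while $\zeta^{z,\mathrm m}_h$ collects the compensated $p_0$-jumps over $\{|y|\le1\}$ (or $b(z)W_h$ if $\alpha=2$), the compensated $p$-jumps over $U_1$, and the deterministic drift/compensator terms (of size $O(h)$ for $\alpha\ge1$, $O(h^{1/\alpha})$ for $\alpha<1$). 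On $\{\zeta^{z,\mathrm b}_h\ne0\}$ I use $|f(z+\zeta^z_h)-f(z)|\le2|f|_\infty\le C|f|_\beta$, contributing $\le C|f|_\beta h\le C|f|_\beta h^{\beta/\alpha}$; on its complement I bound $|f(z+\zeta^{z,\mathrm m}_h)-f(z)|$ by $|f|_\beta|\zeta^{z,\mathrm m}_h|^\beta$ if $\beta<1$, and by $|f|_\beta|\zeta^{z,\mathrm m}_h|^\beta+|\nabla f(z)|\,\big|\mathbf{E}[\zeta^{z,\mathrm m}_h\mid\mathcal{F}_\tau]\big|$ if $1<\beta<\alpha$ (which forces $\alpha>1$, so the conditional mean is the $O(h)$ frozen drift and this term is $\le C|f|_\beta h\le C|f|_\beta h^{\beta/\alpha}$). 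Everything thus reduces to
\[
\mathbf{E}\big[|\zeta^{z,\mathrm m}_h|^\beta\mid\mathcal{F}_\tau\big]\le Ch^{\beta/\alpha}\quad\text{uniformly in }z,
\]
which I expect to be the main obstacle.

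\textbf{Proof of the moment bound.} The $p$-part $\int_\tau^s\int_{U_1}l_\alpha(z,v)\,q$ has $\beta$-th conditional moment $\le Ch^{\beta/\alpha}$: when $\alpha<1$ by subadditivity of $t\mapsto t^\alpha$, Jensen's inequality (exponent $\beta/\alpha<1$) and $\int_{U_1}|l_\alpha(z,\cdot)|^\alpha\,d\pi\le K$ (assumption A2); when $\alpha\in[1,2]$ by the Burkholder--Davis--Gundy inequality in $L^\alpha$ together with the same bound and Jensen. For the $\alpha$-stable-like part over $\{|y|\le1\}$ (or $b(z)W_h$ when $\alpha=2$) I split the jumps at level $h^{1/\alpha}$: the jumps with $|y|\le h^{1/\alpha}$ form a compensated $L^2$-martingale of second moment $\le Ch^{2/\alpha}$, since $\int_{|y|\le h^{1/\alpha}}|y|^2|y|^{-d-\alpha}\,dy\le Ch^{(2-\alpha)/\alpha}$, hence with $\beta$-th moment $\le Ch^{\beta/\alpha}$ (and, for $\alpha<1$, with compensator $O(h^{1/\alpha})$); the jumps with $h^{1/\alpha}<|y|\le1$ are $O(1)$ in number (their intensity times $h$ is bounded) and, via subadditivity (resp.\ BDG) and $\int_{|y|>h^{1/\alpha}}|y|^\beta|y|^{-d-\alpha}\,dy\le Ch^{(\beta-\alpha)/\alpha}$ for $\beta<\alpha$, contribute $\le Ch^{\beta/\alpha}$. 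All constants are uniform in $z$ because $\tilde m_\alpha$ is bounded (by $\sup|m_\alpha|<\infty$, boundedness of $c$ and $\inf_x|\det c(x)|>0$). Adding the pieces, using $(\sum_i a_i)^\beta\le C\sum_i a_i^\beta$ for a bounded number of nonnegative terms ($\beta<2$), $h^\beta\le h^{\beta/\alpha}$ when $\alpha\ge1$, and noting that the logarithmic corrections to the compensators at $\alpha=1$ only improve the bound ($h^\beta|\log h|^\beta\le Ch^{\beta/\alpha}$), yields the moment estimate and hence the lemma.
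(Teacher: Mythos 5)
Your reduction to a frozen-coefficient estimate $\sup_z|\mathbf{E}f(z+\zeta^z_h)-f(z)|\le C|f|_\beta h^{\kappa(\alpha,\beta)}$ is exactly right, and for $\beta>\alpha$ your argument (It\^o/Dynkin formula plus the sup-norm generator estimates that follow from Corollary~14 of \cite{MiZ10} and Proposition~\ref{b1}) is the same as the paper's.

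For $\beta<\alpha$, however, you take a genuinely different route. The paper mollifies $f$ to $f^{\varepsilon}$ and applies the already-established analytic bounds $|\mathcal{A}_z^{(\alpha)}f^\varepsilon|_\infty+|\mathcal{B}_z^{(\alpha)}f^\varepsilon|_\infty\le C\varepsilon^{-\alpha+\beta}|f|_\beta$ (Lemma~\ref{lnew2} and Corollary~\ref{coro3}) together with $|f-f^\varepsilon|_\infty\le C\varepsilon^\beta|f|_\beta$; It\^o then gives $C(\varepsilon^\beta+\delta\varepsilon^{-\alpha+\beta})|f|_\beta$, which is optimized over $\varepsilon$ to yield $C\delta^{\beta/\alpha}|f|_\beta$. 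You instead establish a fractional moment bound $\mathbf{E}[|\zeta^{z,\mathrm m}_h|^\beta]\le Ch^{\beta/\alpha}$ by decomposing the jumps at scale $h^{1/\alpha}$, using $L^2$/BDG for the small jumps, subadditivity of $t\mapsto t^{\alpha\wedge 1}$ plus Jensen for the large jumps, and control of the $\pi$-driven part via A2; combining this with $(\beta-1)$-H\"older continuity of $\nabla f$ (for $1<\beta<\alpha$) or $\beta$-H\"older continuity of $f$ (for $\beta<1$), plus a crude $O(h)$ bound on the event of a big jump, gives the same rate. Both approaches are correct. The paper's is shorter because it recycles machinery already needed elsewhere (the mollifier estimates of Lemma~\ref{lnew2} are proved for this very purpose), while yours is self-contained and more probabilistic; the price you pay is that the moment bound requires some care across the three regimes $\alpha<1$, $\alpha=1$, $\alpha>1$ (in particular, in (\ref{two}) the $p$-integral over $U_1$ is uncompensated for $\alpha\le 1$ and compensated only for $\alpha>1$, and the logarithmic behaviour of the $p_0$-compensator at $\alpha=1$ relies on the symmetry of $m_1$). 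You correctly flag these, and with those details filled in your argument closes.
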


The proof of Lemma~\ref{lem:expect} is based on applying It\^{o}'s formula
to $f(Y_{s})-f(Y_{\tau _{i_{s}}})$, $f\in C^{\beta }(\mathbf{R}^{d})$. If $%
\beta >\alpha $, by Remark \ref{lrenew1} and It\^{o}'s formula, the
inequality holds. If $\beta <\alpha $, we first smooth $f$ by using $w\in
C_{0}^{\infty }(\mathbf{R}^{d}),$ a nonnegative smooth function with support
on $\{|x|\leq 1\}$ such that $w(x)=w(|x|)$, $x\in \mathbf{R}^{d},$ and $\int
w(x)dx=1$ (see (8.1) in \cite{Fol99})$.$ Note that, because of the symmetry, 
\begin{equation}
\int_{\mathbf{R}^{d}}x^{i}w(x)dx=0,i=1,\ldots ,d.  \label{ff26}
\end{equation}

For $x\in \mathbf{R}^{d}$ and $\varepsilon \in (0,1)$, define $%
w^{\varepsilon }(x)=\varepsilon ^{-d}w\big( \frac{x}{\varepsilon }\big) $
and the convolution 
\begin{equation}
f^{\varepsilon }(x)=\int f(y)w^{\varepsilon }(x-y)dy=\int
f(x-y)w^{\varepsilon }(y)dy,x\in \mathbf{R}^{d}.  \label{maf7}
\end{equation}


\subsection{Some Auxiliary Estimates}

In \cite{MiZ10} the following estimates for $\mathcal{A}_{z}^{(\alpha )}$
and $f^{\varepsilon }$ were proved.

\begin{lemma}
\label{lnew2} $($Lemma 21 in \textup{\cite{MiZ10}}$)$ Let $\alpha \in (0,2)$%
, $\beta <\alpha$, $\beta \neq 1$, and $\varepsilon \in (0,1)$. Then

\begin{enumerate}
\item[\textup{(i)}] there exists a constant $C$ such that for all $f\in
C^{\beta }(\mathbf{R}^{d}),x\in \mathbf{R}^{d}$, 
\begin{equation*}
|f^{\varepsilon }(x)-f(x)|\leq C\varepsilon ^{\beta }|f|_{\beta };
\end{equation*}

\item[\textup{(ii)}] there exists a constant $C$ such that for all $z,x\in 
\mathbf{R}^{d},$ 
\begin{equation}
|\mathcal{A}_{z}^{(\alpha )}f^{\varepsilon }(x)|\leq C\varepsilon ^{-\alpha
+\beta }|f|_{\beta }  \label{ff29}
\end{equation}%
and in particular, for all $f\in C^{\beta }(\mathbf{R}^{d}),z,x\in \mathbf{R}%
^{d}$, 
\begin{equation}
|\partial ^{\alpha }f^{\varepsilon }(x)|\leq C\varepsilon ^{-\alpha +\beta
}|f|_{\beta };  \label{maf5}
\end{equation}

\item[\textup{(iii)}] for $k,l=1,\ldots ,d,x\in \mathbf{R}^{d},$%
\begin{eqnarray}
|\partial _{k}f^{\varepsilon }(x)| &\leq &C\varepsilon ^{-1+\beta
}|f|_{\beta },\mbox{ if }\beta <1,  \label{ff30} \\
|f^{\varepsilon }|_{1} &\leq &C|f|_{1},  \notag \\
|\partial _{kl}^{2}f^{\varepsilon }(x)| &\leq &C\varepsilon ^{-2+\beta
}|f|_{\beta },\mbox{ if }\beta <2,\text{ }  \notag
\end{eqnarray}%
and%
\begin{eqnarray}
|f^{\varepsilon }|_{\alpha } &\leq &C\varepsilon ^{-\alpha +\beta
}|f|_{\beta },\mbox{ if } \alpha \in (1,2), \beta \in (0,1],  \label{maf5'}
\\
|\partial ^{\alpha -1}\nabla f^{\varepsilon }(x)| &\leq &C\varepsilon
^{-\alpha +\beta }|f|_{\beta },\mbox{ if } \alpha \in (1,2), \beta \in
(1,\alpha ).  \label{maf6}
\end{eqnarray}
\end{enumerate}
\end{lemma}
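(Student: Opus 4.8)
The plan is to reduce every bound to one mechanism: a scaling identity for the mollifier, together with the fact that the relevant operator applied to the \emph{fixed} bump $w$ produces a kernel with vanishing moments and controlled decay. Write $(g)^{\varepsilon}(y)=\varepsilon^{-d}g(y/\varepsilon)$, so $w^{\varepsilon}=(w)^{\varepsilon}$ and $f^{\varepsilon}=f\ast w^{\varepsilon}$. For fixed $z$ the operator $\mathcal{A}_{z}^{(\alpha)}$ has $x$-independent coefficients, hence is translation invariant, so $\mathcal{A}_{z}^{(\alpha)}f^{\varepsilon}=f\ast(\mathcal{A}_{z}^{(\alpha)}w^{\varepsilon})$, and likewise for $\partial_{k}$, $\partial_{kl}^{2}$, $|\partial|^{\alpha}$, $|\partial|^{\alpha-1}$. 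A change of variables in the defining integrals gives the scaling laws $\mathcal{A}_{z}^{(\alpha)}w^{\varepsilon}=\varepsilon^{-\alpha}(\mathcal{A}_{z}^{(\alpha)}w)^{\varepsilon}$, $|\partial|^{\alpha}w^{\varepsilon}=\varepsilon^{-\alpha}(|\partial|^{\alpha}w)^{\varepsilon}$, $\partial_{k}w^{\varepsilon}=\varepsilon^{-1}(\partial_{k}w)^{\varepsilon}$, $\partial_{kl}^{2}w^{\varepsilon}=\varepsilon^{-2}(\partial_{kl}^{2}w)^{\varepsilon}$ and $|\partial|^{\alpha-1}w^{\varepsilon}=\varepsilon^{-(\alpha-1)}(|\partial|^{\alpha-1}w)^{\varepsilon}$. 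For $\alpha=1$ the truncation $\chi^{(1)}(y)=\mathbf{1}_{\{|y|\le1\}}$ is not scale invariant; there I would first rewrite the jump integral as a principal value, legitimate by the symmetry $\tilde{m}_{1}(z,-y/|y|)=\tilde{m}_{1}(z,y/|y|)$ coming from $m_{1}(z,-w)=m_{1}(z,w)$, and only then rescale.

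Next I would record two properties of each kernel $Lw$, with $L\in\{\mathcal{A}_{z}^{(\alpha)},|\partial|^{\alpha},\partial_{k},\partial_{kl}^{2},|\partial|^{\alpha-1}\}$. First, $\int Lw(y)\,dy=0$, and when $L$ has order greater than one also $\int y^{i}Lw(y)\,dy=0$; both hold because the Fourier symbol of $L$ vanishes at $\xi=0$ to the relevant order. Second, $Lw$ is smooth, compactly supported when $L$ is local, and when $L$ is the nonlocal operator $\mathcal{A}_{z}^{(\alpha)}$, $|\partial|^{\alpha}$ or $|\partial|^{\alpha-1}$ it decays like $|y|^{-d-\alpha}$, respectively $|y|^{-d-(\alpha-1)}$, at infinity, with constants uniform in $z$ --- the uniformity because $\tilde{m}_{\alpha}(z,\cdot)$, $a(z)$, $B(z)$ are bounded above, and $\tilde{m}_{\alpha}(z,\cdot)$ also bounded below, uniformly in $z$, which follows from boundedness of the coefficients together with $\inf_{z}|\det c(z)|>0$ and $\sup_{z}|c(z)|<\infty$. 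Consequently $\int|y|^{\beta}|Lw(y)|\,dy$ is finite, uniformly in $z$, precisely because $\beta<\alpha$.

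Granting this, each estimate is a one-line computation. For (i), using $\int w^{\varepsilon}=1$ and, when $\beta>1$, $\int y^{i}w^{\varepsilon}(y)\,dy=0$ from (\ref{ff26}), write $f^{\varepsilon}(x)-f(x)=\int\big(f(x-y)-f(x)-\mathbf{1}_{\{\beta>1\}}(\nabla f(x),y)\big)w^{\varepsilon}(y)\,dy$, bound the bracket by $|f|_{\beta}|y|^{\beta}$ via a first-order Taylor expansion, and integrate. For the estimates of $Lf^{\varepsilon}$ with $L$ of order $k\in\{\alpha,1,2\}$, combine the scaling law with the moment vanishing of $Lw$ to write $Lf^{\varepsilon}(x)=\varepsilon^{-k}\int\big(f(x-y)-f(x)-\mathbf{1}_{\{\beta>1\}}(\nabla f(x),y)\big)(Lw)^{\varepsilon}(y)\,dy$, bound the bracket by $|f|_{\beta}|y|^{\beta}$, and conclude $|Lf^{\varepsilon}(x)|\le C\varepsilon^{-k+\beta}|f|_{\beta}$. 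For $|\partial|^{\alpha-1}\nabla f^{\varepsilon}$ with $\beta\in(1,\alpha)$, first use $\nabla f^{\varepsilon}=(\nabla f)\ast w^{\varepsilon}$ and the scaling law to get $|\partial|^{\alpha-1}\nabla f^{\varepsilon}(x)=\varepsilon^{-(\alpha-1)}\int\big(\nabla f(x-y)-\nabla f(x)\big)(|\partial|^{\alpha-1}w)^{\varepsilon}(y)\,dy$, then bound by $|f|_{\beta}|y|^{\beta-1}$, the moment $\int|y|^{\beta-1}||\partial|^{\alpha-1}w(y)|\,dy$ being finite since $\beta<\alpha$. Finally $|f^{\varepsilon}|_{1}\le C|f|_{1}$ is immediate from $\nabla f^{\varepsilon}=(\nabla f)\ast w^{\varepsilon}$, and the $C^{\alpha}$-seminorm in (\ref{maf5'}) follows by interpolating $\sup_{x}|\nabla f^{\varepsilon}(x)|\le C\varepsilon^{\beta-1}|f|_{\beta}$ with $\sup_{x}|\partial^{2}f^{\varepsilon}(x)|\le C\varepsilon^{\beta-2}|f|_{\beta}$, the two regimes matched at $|x-x'|\sim\varepsilon$.

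The main obstacle is the decay statement in the second paragraph: proving that $\mathcal{A}_{z}^{(\alpha)}w$ and $|\partial|^{\alpha}w$ really decay like $|y|^{-d-\alpha}$, with constants independent of $z$, so that the weighted moments converge exactly on $\beta<\alpha$. This is the only place the non-degeneracy and boundedness of $c$ genuinely enter --- through $\tilde{m}_{\alpha}$ --- and it is what pins down the exponent $-\alpha+\beta$ in (\ref{ff29})--(\ref{maf6}). The $\alpha=1$ case, where the truncation must be absorbed into a principal value before rescaling, is a minor additional technicality.
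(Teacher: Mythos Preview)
The paper does not give its own proof of this lemma; it is simply quoted from \cite{MiZ10}. Your approach---pushing the operator onto the mollifier, rescaling to the fixed kernel $Lw$, and exploiting the vanishing low-order moments together with the $|y|^{-d-\alpha}$ decay of $\mathcal{A}_{z}^{(\alpha)}w$ and $|\partial|^{\alpha}w$---is the standard route and is correct. The decay estimate for large $|y|$ follows directly from $\mathrm{supp}\,w\subset\{|y|\le1\}$ and the \emph{upper} bound on $\tilde m_\alpha$; no lower bound on $\tilde m_\alpha$ is needed there, so that part of your justification is slightly overstated but harmless. The moment identities $\int Lw=0$ and (for order $>1$) $\int y_j\,Lw=0$ are exactly as you say, via the vanishing of the symbol at $\xi=0$.

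One small correction: your argument for $|f^\varepsilon|_1\le C|f|_1$ via $\nabla f^\varepsilon=(\nabla f)\ast w^\varepsilon$ is not valid as written, because $|\cdot|_1$ here is the Zygmund norm and $\nabla f$ need not be bounded for $f\in C^1$. The inequality is nonetheless immediate from the definition: convolving the second difference $f(\cdot+h)+f(\cdot-h)-2f(\cdot)$ against the probability measure $w^\varepsilon$ preserves the bound $|h|\,|f|_1$. The same remark applies to the endpoint $\beta=1$ in your interpolation for (\ref{maf5'}), though the global hypothesis $\beta\neq1$ makes that point moot.
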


\begin{corollary}
\label{coro3}Assume $a(x)$ and 
\begin{equation*}
\int [\mathbf{1}_{U_{1}}(\upsilon )|l_{\alpha }(x,\upsilon )|^{\alpha }+%
\mathbf{1}_{U_{1}^{c}}(\upsilon )|l_{\alpha }(x,\upsilon )|^{\alpha \wedge
1}\wedge 1]\pi (d\upsilon ),
\end{equation*}%
are bounded, $\varepsilon \in (0,1)$. Then there exists a constant $C$ such
that for all $z,x\in \mathbf{R}^{d}$, $f\in C^{\beta }(\mathbf{R}^{d}),$%
\begin{equation*}
|\mathcal{B}_{z}^{(\alpha )}f^{\varepsilon }(x)|\leq C\varepsilon ^{-\alpha
+\beta }|f|_{\beta }.
\end{equation*}
\end{corollary}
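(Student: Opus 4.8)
The plan is to bound $\mathcal{B}_{z}^{(\alpha )}f^{\varepsilon }(x)$ term by term, using only the pointwise bounds for $f^{\varepsilon }$, $\nabla f^{\varepsilon }$ and $\partial ^{2}f^{\varepsilon }$ collected in Lemma~\ref{lnew2} and its relatives, together with the moment hypotheses on $l_{\alpha }$ assumed in the statement. Throughout I am in the range of Lemma~\ref{lnew2}, i.e. $\beta <\alpha $ and $\beta \neq 1$ (this is the regime in which the Corollary is applied; the case $\alpha =2$ enters only through the Hessian bound of Lemma~\ref{lnew2}(iii), which is available since $\beta <2$). First I would write $\mathcal{B}_{z}^{(\alpha )}f^{\varepsilon }(x)$ out from \eqref{ff10} and split the $\pi $-integral into its part over $U_{1}$ and its part over $U_{1}^{c}$: for $\alpha \in (0,1]$ it equals $\int_{U_{1}}[f^{\varepsilon }(x+l_{\alpha }(z,\upsilon ))-f^{\varepsilon }(x)]\pi (d\upsilon )+\int_{U_{1}^{c}}[f^{\varepsilon }(x+l_{\alpha }(z,\upsilon ))-f^{\varepsilon }(x)]\pi (d\upsilon )$, whereas for $\alpha \in (1,2]$ there is in addition the drift $(a(z),\nabla f^{\varepsilon }(x))$ and the $U_{1}$-integrand carries the correction $-(\nabla f^{\varepsilon }(x),l_{\alpha }(z,\upsilon ))$. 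I would then bound the $U_{1}$-piece, the $U_{1}^{c}$-piece, and (for $\alpha >1$) the drift separately.

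The core of the argument is the following list of pointwise ``mollified Taylor'' estimates, uniform in $x$ with $C$ independent of $f$ and $\varepsilon $: \textup{(a)} $|f^{\varepsilon }(x+h)-f^{\varepsilon }(x)|\leq C|f|_{\beta }\,\varepsilon ^{\beta -\alpha }\big(|h|^{\alpha \wedge 1}\wedge 1\big)$ for all $h$; \textup{(b)} $|\nabla f^{\varepsilon }(x)|\leq C|f|_{\beta }\,\varepsilon ^{\beta -\alpha }$ when $\alpha >1$; \textup{(c)} $|f^{\varepsilon }(x+h)-f^{\varepsilon }(x)-(\nabla f^{\varepsilon }(x),h)|\leq C|f|_{\beta }\,\varepsilon ^{\beta -\alpha }|h|^{\alpha }$ for all $h$, when $\alpha >1$. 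Estimate \textup{(a)} follows by interpolating the elementary bound $|f^{\varepsilon }(x+h)-f^{\varepsilon }(x)|\leq C|f|_{\beta }(|h|^{\beta \wedge 1}\wedge 1)$ against the gradient bound \eqref{ff30} (only needed when $\beta <1$; for $\beta >1$ it is immediate, since $\varepsilon ^{\beta -\alpha }\geq 1$ and $|\nabla f^{\varepsilon }|\leq |f|_{\beta }$). Estimate \textup{(b)} is \eqref{ff30} when $\beta <1$ (note $-1+\beta \geq -\alpha +\beta $) and $\sup_{x}|\nabla f^{\varepsilon }(x)|\leq \sup_{x}|\nabla f(x)|$ when $\beta >1$. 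Estimate \textup{(c)} follows, for $\beta <1$, by interpolating \eqref{ff30} against the Hessian bound $|\partial ^{2}f^{\varepsilon }|\leq C\varepsilon ^{\beta -2}|f|_{\beta }$, and, for $\beta \in (1,\alpha )$ with $\alpha <2$, from \eqref{maf6} together with the representation \eqref{nf5} of Lemma~\ref{r1}; for $\alpha =2$ one uses the Hessian bound directly. Each interpolation reduces to an elementary power inequality in $t=|h|/\varepsilon $; the least transparent one, \textup{(c)} with $\beta <1$, is $\min (t^{2},t)\leq t^{\alpha }$, which holds precisely because $1<\alpha <2$.

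Granting \textup{(a)--(c)}, the conclusion is routine. On $U_{1}$: if $\alpha \leq 1$, \textup{(a)} gives integrand $\leq C|f|_{\beta }\varepsilon ^{\beta -\alpha }|l_{\alpha }(z,\upsilon )|^{\alpha }$ (using $|h|^{\alpha \wedge 1}\wedge 1\leq |h|^{\alpha }$), while if $\alpha >1$, \textup{(c)} gives the corrected integrand $\leq C|f|_{\beta }\varepsilon ^{\beta -\alpha }|l_{\alpha }(z,\upsilon )|^{\alpha }$; in either case, integrating and invoking the assumed boundedness of $\int_{U_{1}}|l_{\alpha }(x,\upsilon )|^{\alpha }\pi (d\upsilon )$ yields the bound $C|f|_{\beta }\varepsilon ^{\beta -\alpha }$. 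On $U_{1}^{c}$, for any $\alpha $, \textup{(a)} gives integrand $\leq C|f|_{\beta }\varepsilon ^{\beta -\alpha }\big(|l_{\alpha }(z,\upsilon )|^{\alpha \wedge 1}\wedge 1\big)$, and the assumed boundedness of $\int_{U_{1}^{c}}\big(|l_{\alpha }(x,\upsilon )|^{\alpha \wedge 1}\wedge 1\big)\pi (d\upsilon )$ closes that piece. For $\alpha >1$, the drift obeys $|(a(z),\nabla f^{\varepsilon }(x))|\leq \big(\sup_{x}|a(x)|\big)\,|\nabla f^{\varepsilon }(x)|\leq C|f|_{\beta }\varepsilon ^{\beta -\alpha }$ by \textup{(b)} and the boundedness of $a$. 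Summing the finitely many contributions gives $|\mathcal{B}_{z}^{(\alpha )}f^{\varepsilon }(x)|\leq C|f|_{\beta }\varepsilon ^{\beta -\alpha }=C\varepsilon ^{-\alpha +\beta }|f|_{\beta }$, uniformly in $z,x$.

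The only real work is establishing \textup{(a)--(c)} with the precise factor $\varepsilon ^{\beta -\alpha }$ --- keeping the exponent of $\varepsilon $ exact through the interpolations, most delicately in the second-order remainder \textup{(c)} for $\alpha \in (1,2)$, $\beta <1$, where one combines the $\varepsilon ^{\beta -2}$ Hessian bound with the $\varepsilon ^{\beta -1}$ gradient bound and checks $\min (t^{2},t)\leq t^{\alpha }$. Once \textup{(a)--(c)} are in hand, the remaining integrations against $\pi $ are immediate from the moment bounds hypothesised in the Corollary; in particular no use of the assumptions \textup{A2-A4($\beta )$} is needed.
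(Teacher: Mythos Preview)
Your proof is correct and follows the same overall strategy as the paper: bound the $U_1$-, $U_1^c$-, and drift contributions separately by establishing pointwise increment estimates for $f^{\varepsilon}$ and then integrating against the moment hypotheses on $l_{\alpha}$. The organization into the abstract estimates \textup{(a)--(c)} is tidier than the paper's case-by-case treatment.

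There is a genuine technical difference in how you obtain two of the increment bounds. For the first-order estimate \textup{(a)} when $\alpha<1$, the paper invokes Komatsu's fractional representation (Lemma~\ref{r1}) together with \eqref{maf5} to produce the factor $|h|^{\alpha}$ directly; you instead interpolate between the $|h|^{\beta}$ H\"older bound and the $\varepsilon^{\beta-1}|h|$ gradient bound by splitting at $|h|=\varepsilon$. Similarly, for the second-order remainder \textup{(c)} when $\alpha\in(1,2)$ and $\beta<1$, the paper appeals to the $C^{\alpha}$-norm estimate \eqref{maf5'} for $f^{\varepsilon}$, whereas you interpolate between the Hessian bound $\varepsilon^{\beta-2}|h|^{2}$ and the gradient bound $\varepsilon^{\beta-1}|h|$, reducing to the elementary inequality $\min(t^{2},t)\leq t^{\alpha}$. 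Your route is more self-contained in that it avoids Lemma~\ref{r1} except in the one case ($\alpha\in(1,2)$, $\beta\in(1,\alpha)$) where the paper also needs it; the paper's route, on the other hand, reads off the exponent $\alpha$ in a single step from the kernel estimate rather than by a two-scale comparison.
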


\begin{proof}
If $\beta <\alpha <1$, by Lemma \ref{r1},%
\begin{equation*}
f^{\varepsilon }(x+y)-f^{\varepsilon }(x)=\int k^{(\alpha )}(y,y^{\prime
})\partial ^{\alpha }f^{\varepsilon }(x-y^{\prime })dy^{\prime },
\end{equation*}%
and by Lemma \ref{lnew2}, (\ref{maf5}), 
\begin{equation}
|f^{\varepsilon }(x+y)-f^{\varepsilon }(x)|\leq C\varepsilon ^{-\alpha
+\beta }|f|_{\beta }(|y|^{\alpha }\wedge 1),x,y\in \mathbf{R}^{d}
\label{ff32}
\end{equation}%
and%
\begin{eqnarray*}
|f^{\varepsilon }(x+l_{\alpha }(x,\upsilon ))-f^{\varepsilon }(x)| &\leq
&C\varepsilon ^{-\alpha +\beta }|f|_{\beta }(|l_{\alpha }(x,\upsilon
)|^{\alpha }\wedge 1) \\
&\leq &C\varepsilon ^{-\alpha +\beta }|f|_{\beta }[\mathbf{1}%
_{U_{1}}(\upsilon )|l_{\alpha }(x,\upsilon )|^{\alpha } +\mathbf{1}%
_{U_{1}^{c}}(\upsilon )(|l_{\alpha }(x,\upsilon )|^{\alpha }\wedge 1)].
\end{eqnarray*}

If $\beta <\alpha =1$, by Lemma \ref{lnew2}(ii) and (\ref{ff30}),%
\begin{eqnarray}
|f^{\varepsilon }(x+y)-f^{\varepsilon }(x)| &\leq &C\sup_{x}[f(x)|+|\nabla
f^{\varepsilon }(x)|](|y|\wedge 1)  \label{ff33} \\
&\leq &C\varepsilon ^{-1+\beta }|f|_{\beta }(|y|\wedge 1),x,y\in \mathbf{R}%
^{d}  \notag
\end{eqnarray}%
and%
\begin{eqnarray*}
|f^{\varepsilon }(x+l_{1}(x,\upsilon ))-f^{\varepsilon }(x)| &\leq
&C\varepsilon ^{-1+\beta }|f|_{\beta }(|l_{1}(x,\upsilon )|\wedge 1) \\
&\leq &C\varepsilon ^{-1+\beta }|f|_{\beta }[\mathbf{1}_{U_{1}}(\upsilon
)|l_{1}(x,\upsilon )| +\mathbf{1}_{U_{1}^{c}}(\upsilon )(|l_{1}(x,\upsilon
)|\wedge 1)].
\end{eqnarray*}

Assume $\alpha \in (1,2),$ then for $x,y\in \mathbf{R}^{d},$%
\begin{equation}
f^{\varepsilon }(x+y)-f^{\varepsilon }(x)-(\nabla f^{\varepsilon
}(x),y)=\int_{0}^{1}\big(\nabla f^{\varepsilon }(x+sy)-\nabla f^{\varepsilon
}(x),y\big)ds.  \label{ff34}
\end{equation}%
If $\beta \in (1,\alpha )$, then by Lemmas \ref{r1}, \ref{lnew2} and (\ref%
{maf5'}), for $x,y^{\prime }\in \mathbf{R}^{d},$%
\begin{eqnarray}
|\nabla f^{\varepsilon }(x+y^{\prime })-\nabla f^{\varepsilon }(x)| &\leq
&C\sup_{x}|\partial ^{\alpha -1}\nabla f^{\varepsilon }(x)|~|y^{\prime
}|^{\alpha -1}  \label{ff35} \\
&\leq &C\varepsilon ^{-\alpha +\beta }|f|_{\beta }|y^{\prime }|^{\alpha -1}.
\notag
\end{eqnarray}%
If $\beta >\alpha >1$, then directly%
\begin{equation}
|\nabla f^{\varepsilon }(x+y^{\prime })-\nabla f^{\varepsilon }(x)|\leq
C|f|_{\beta }|y^{\prime }|^{\alpha -1}.  \notag
\end{equation}

If $\beta \in (0,1],\alpha \in (1,2)$, then by Lemma \ref{lnew2}, (\ref{maf6}%
),%
\begin{equation}
|\nabla f^{\varepsilon }(x+y^{\prime })-\nabla f^{\varepsilon }(x)|\leq
C\varepsilon ^{-\alpha +\beta }|y^{\prime }|^{\alpha -1}|f|_{\beta }
\label{ff36}
\end{equation}%
Applying (\ref{ff35}), (\ref{ff36}) to (\ref{ff34}) we have for $x,y\in 
\mathbf{R}^{d},$ 
\begin{equation*}
|f^{\varepsilon }(x+y)-f^{\varepsilon }(x)-(\nabla f^{\varepsilon
}(x),y)|\leq C\varepsilon ^{-\alpha +\beta }|y|^{\alpha }|f|_{\beta }.
\end{equation*}%
Hence,%
\begin{eqnarray*}
\mathbf{1}_{U_{1}}(\upsilon )|f^{\varepsilon }(x+l_{\alpha }(x,\upsilon
))-f^{\varepsilon }(x)-(\nabla f^{\varepsilon }(x),l_{\alpha }(x,\upsilon )|
\leq C\varepsilon ^{-\alpha +\beta }|l_{\alpha }(x,\upsilon )|^{\alpha
}|f|_{\beta }.
\end{eqnarray*}%
Also, for $\alpha >1,\beta \in (1,\alpha ),$%
\begin{eqnarray*}
\mathbf{1}_{U_{1}^{c}}(\upsilon )|f^{\varepsilon }(x+l_{\alpha }(x,\upsilon
))-f^{\varepsilon }(x)| &\leq &C|f|_{\beta }\big( |l_{\alpha }(x,\upsilon
)|\wedge 1\big).
\end{eqnarray*}

Therefore, the statement follows by the assumptions and Lemma \ref{lnew2}.
\end{proof}


\subsection{Proof of Lemma \protect\ref{lem:expect}}

If $\beta <\alpha \,,$ define $f^{\varepsilon }$ by (\ref{maf7}) for $%
\varepsilon \in (0,1)$ and apply It\^{o}'s formula (see Remark \ref{lrenew1}%
): for $s\in \lbrack 0,T]$, 
\begin{equation*}
\mathbf{E}[f^{\varepsilon }(Y_{s})-f^{\varepsilon }(Y_{\tau _{i_{s}}})|%
\mathcal{F}_{\tau _{i_{s}}}]=\mathbf{E}\big[\int_{\tau _{i_{s}}}^{s}(%
\mathcal{A}_{Y_{\tau _{i_{s}}}}^{(\alpha )}f^{\varepsilon }(Y_{r})+\mathcal{B%
}_{Y_{\tau _{i_{s}}}}^{(\alpha )}f^{\varepsilon }(Y_{r}))dr|\mathcal{F}%
_{\tau _{i_{s}}}\big].
\end{equation*}%
Hence, by Lemma \ref{lnew2} and Corollary \ref{coro3}, for $\varepsilon \in
(0,1)$,%
\begin{eqnarray*}
|\mathbf{E}[f(Y_{s})-f(Y_{\tau _{i_{s}}})|\mathcal{F}_{\tau _{i_{s}}}]|
&\leq &|\mathbf{E}[(f-f^{\varepsilon })(Y_{s})-(f-f^{\varepsilon })(Y_{\tau
_{i_{s}}})|\mathcal{F}_{\tau _{i_{s}}}]| \\
& & +|\mathbf{E}[f^{\varepsilon }(Y_{s})-f^{\varepsilon }(Y_{\tau _{i_{s}}})|%
\mathcal{F}_{\tau _{i_{s}}}]| \\
&\leq &C(\varepsilon ^{\beta }+\delta \varepsilon ^{-\alpha +\beta
})|f|_{\beta },
\end{eqnarray*}%
with a constant $C$ independent of $\varepsilon ,f$. Minimizing $\varepsilon
^{\beta }+\delta \varepsilon ^{-\alpha +\beta }$ in $\varepsilon \in (0,1)$,
we obtain%
\begin{equation*}
|\mathbf{E}[f(Y_{s})-f(Y_{\tau _{i_{s}}})|\mathcal{F}_{\tau _{i_{s}}}]|\leq
C\delta ^{\kappa (\alpha ,\beta )}|f|_{\beta }.
\end{equation*}

If $\beta >\alpha ,$ we apply It\^{o}'s formula directly (see Remark \ref%
{lrenew1}):%
\begin{equation*}
\mathbf{E}[f(Y_{s})-f(Y_{\tau _{i_{s}}})|\mathcal{F}_{\tau _{i_{s}}}]=%
\mathbf{E}\Big[\int_{\tau _{i_{s}}}^{s}\big(\mathcal{A}_{Y_{\tau
_{i_{s}}}}^{(\alpha )}f(Y_{r})+\mathcal{B}_{Y_{\tau _{i_{s}}}}^{(\alpha
)}f(Y_{r})\big)dr|\mathcal{F}_{\tau _{i_{s}}}\Big].
\end{equation*}%
Hence, by Corollary 14 in \cite{MiZ10} and Lemma \ref{lnew2}, 
\begin{equation*}
|\mathbf{E}[f(Y_{s})-f(Y_{\tau _{i_{s}}})|\mathcal{F}_{\tau _{i_{s}}}]|\leq
C\delta |f|_{\beta }.
\end{equation*}%
The statement of Lemma \ref{lem:expect} follows.


\subsection{Proof of Theorem \protect\ref{thm:main}}

Let $v\in C^{\alpha +\beta }(H)$ be the unique solution to (\ref{maf8}) (see
Corollary \ref{lcornew1}). By It\^{o}'s formula (see Remark \ref{lrenew1}, (%
\ref{ff25})) and (\ref{maf8})),%
\begin{eqnarray*}
\mathbf{E}v(0,X_{0}) &=&\mathbf{E}v(T,X_{T})-\mathbf{E}\int_{0}^{T}(\partial
_{t}v(s,X_{s})+\mathcal{A}_{X_{s}}^{(\alpha )}v(s,X_{s})+\mathcal{B}%
_{X_{s}}^{(\alpha )}v(s,X_{s})]ds \\
&=&\mathbf{E}\big[g(X_{T})-\int_{0}^{T}f(X_{s})ds\big],
\end{eqnarray*}%
and 
\begin{equation}
\mathbf{E}v(0,X_{0})=\mathbf{E}v(0,Y_{0}).  \label{eqn:expect_terminal}
\end{equation}

By Proposition \ref{b1}, Corollaries 14 in \cite{MiZ10}, \ref{lcornew1} and
Remark \ref{rlast},%
\begin{eqnarray}
|\mathcal{A}_{z}^{(\alpha )}v(s,\cdot )|_{\beta }+|\mathcal{B}_{z}^{(\alpha
)}v(s,\cdot )|_{\beta } &\leq &C|v|_{\alpha +\beta }\leq C|g|_{\alpha +\beta
},  \label{maf9} \\
|\partial _{t}v(s,\cdot )|_{\beta } &\leq &C|g|_{\alpha +\beta },s\in
\lbrack 0,T].  \notag
\end{eqnarray}

Then, by It\^{o}'s formula (Remark \ref{lrenew1}, (\ref{ff25})) and
Corollary \ref{lcornew1}, with (\ref{eqn:expect_terminal}) and (\ref{maf9}),
it follows that 
\begin{eqnarray*}
&&\mathbf{E}g(Y_{T})-\mathbf{E}g(X_{T})-\mathbf{E}\int_{0}^{T}f(Y_{\tau
_{i_{s}}})ds+\mathbf{E}\int_{0}^{T}f(X_{s})ds \\
&=&\mathbf{E}v(T,Y_{T})-\mathbf{E}v(0,Y_{0})-\mathbf{E}\int_{0}^{T}f(Y_{_{%
\tau _{i_{s}}}})ds+\mathbf{E}\int_{0}^{T}f(X_{s})ds \\
&=&\mathbf{E}\int_{0}^{T}\big\{\lbrack \partial _{t}v(s,Y_{s})-\partial
_{t}v(s,Y_{\tau _{i_{s}}})] \\
&&+[\mathcal{A}_{Y_{\tau _{i_{s}}}}^{(\alpha )}v(s,Y_{s})-\mathcal{A}%
_{Y_{\tau _{i_{s}}}}^{(\alpha )}v(s,Y_{\tau _{i_{s}}})] \\
&&+[\mathcal{B}_{Y_{\tau _{i_{s}}}}^{(\alpha )}v(s,Y_{s})-\mathcal{B}%
_{Y_{\tau _{i_{s}}}}^{(\alpha )}v(s,Y_{\tau _{i_{s}}})]\big\}ds.
\end{eqnarray*}

Hence, by (\ref{maf9}) and Lemma~\ref{lem:expect}, there exists a constant $%
C $ independent of $g$ such that 
\begin{equation*}
|\mathbf{E}g(Y_{T})-\mathbf{E}g(X_{T})|\leq C\delta ^{\kappa (\alpha ,\beta
)}|g|_{\alpha +\beta }.
\end{equation*}%
The statement of Theorem \ref{thm:main} follows.


\end{document}